\newtheorem{theorem}{Theorem}
\theoremstyle{plain}
\newtheorem{lemma}[theorem]{Lemma}
\newtheorem{proposition}[theorem]{Proposition}
\newtheorem{remark}[theorem]{Remark}
\numberwithin{equation}{section}
\numberwithin{theorem}{section}
\newcommand{\cE}{\mathcal{E}}
\newcommand{\cL}{\mathcal{L}}
\newcommand{\R}{\mathbb{R}}
\newcommand{\N}{\mathbb{N}}
\def\e{\varepsilon}
\begin{document}
\title
{Null controllability of two kinds of coupled parabolic systems with switching control}

\author{\sffamily Yuanhang Liu$^{1}$, Weijia Wu$^{1,*}$, Donghui Yang$^1$   \\
	{\sffamily\small $^1$ School of Mathematics and Statistics, Central South University, Changsha 410083, China. }\\
}
	\footnotetext[1]{Corresponding author: weijiawu@yeah.net }

\email{liuyuanhang97@163.com}
\email{weijiawu@yeah.net}
\email{donghyang@outlook.com}

\keywords{null controllability, degenerate coupled system, switching control}
\subjclass[2020]{93B05, 93B07}

\maketitle

\begin{abstract}
The focus of this paper is on the null controllability of two kinds of coupled systems including both degenerate and non-degenerate equations with switching control.  We first establish the observability inequality for measurable subsets in time for such coupled system, and then by the HUM method to obtain the null controllability. Next, we investigate the null controllability of such coupled system for segmented time intervals. Notably, these results are obtained through spectral inequalities rather than using the method of Carleman estimates. Such coupled systems with switching control, to the best of our knowledge, are among the first to discuss.
\end{abstract}

\pagestyle{myheadings}
\thispagestyle{plain}
\markboth{CONTROLLABILITY OF COUPLED PARABOLIC SYSTEMS}{YUANHANG LIU, WEIJIA WU AND DONGHUI YANG}

\section{Introduction}
Controlling coupled parabolic systems presents a challenge that has captured the attention of the control community for several decades. These parabolic systems, including both degenerate and non-degenerate ones, have been encountered in the study of chemical reactions (see, e.g., \cite{bothe2003reaction,erdi1989mathematical}), as well as in various physical and mathematical biology scenarios (see, e.g., \cite{hillen2009user,lauffenburger1982effects,shigesada1979spatial}). On one hand, in \cite{gonzalez2010controllability}, the controllability properties of a linear coupled non-degenerate parabolic system comprising $m$ equations were analyzed under a unique distributed control. The null controllability property for the system with a single control force was established by proving a global Carleman inequality for the adjoint system. For other controllability issues related to non-degenerate coupled parabolic equations, see \cite{ammar2011recent} and references therein. On the other hand, in \cite{cannarsa2009controllability}, the null controllability properties were studied for two systems of coupled one-dimensional degenerate parabolic equations for the first time. The first system consists of two forward equations, while the second consists of one forward equation and one backward equation. Both systems are in cascade, meaning the solution of the first equation acts as a control for the second equation, and the control function only directly influences the first equation. In \cite{hajjaj2013carleman}, the null controllability of weakly degenerate coupled parabolic systems with two different diffusion coefficients and one control force was investigated by utilizing global Carleman estimates. In \cite{wu2020null}, the null controllability of a system of $m$ linear weakly degenerate parabolic equations with coupling terms of first and zero order and only one control force was addressed. Due to the degeneracy, the study was transferred to an approximate non-degenerate adjoint system, and a uniform Carleman estimate and an observation inequality for this approximate adjoint system were obtained. For other controllability issues related to coupled parabolic equations, see \cite{liu2014controllability,liu2018controllability} and references therein.

It should be noted that control systems in practical applications often have multiple actuators. Hence, there is a need to develop switching control strategies that ensure only one actuator is activated at any given time. The study of switching controllers has been extensively pursued in diverse fields of application (refer to the survey article \cite{shorten2007stability}). The notion of switching may also pertain to the capacity of the state equation to transit from one configuration to another at certain time instances (refer to \cite{hante2009modeling} for an application to transportation networks). Further controllability issues pertaining to switching control can be found in \cite{lu2014robust,zuazua2010switching} and references therein.

Thus far, there has been no research on the controllability of coupled systems with switching control involving both degenerate and non-degenerate equations. This paper is the first to attempt to address such coupled systems with switching control. Compared to coupled systems involving either degenerate or non-degenerate equations, studying the controllability of coupled systems with switching control involving both types of equations presents a greater challenge. This is because the selection of the weight function using Carleman estimates can be difficult. Therefore, we employ a spectral inequality method (see e.g., \cite{wang2008null,apraiz2014observability}) to solve this problem. It is noteworthy that we derive null controllability results not only for the scenario of weak degeneracy, but also for the instance of strong degeneracy.

Before we state our main theorems, let us introduce necessary notations.

Let
$T>0$ be a fixed positive time constant,
and $I:=(0,1)$. $G_1,G_2$ are nonempty and open subsets of $I$ such that $G_1\cap G_2=\emptyset$. Write $\chi_{G_1},\chi_{G_2}$ for the characteristic functions of $G_1,G_2$, respectively.

Throughout this paper, we denote by
$\langle\cdot, \cdot\rangle$ the inner product in $L^2(I)$, and denote by
$\|\cdot\|$ the norm induced by $\langle\cdot,\cdot\rangle$. We denote by $|\cdot|$
the Lebesgue measure on $\R$.

Let $A$, $\bar{A}$ be unbounded linear operators on $L^2(I)$:
\begin{equation*}
\begin{cases}
\mathcal{D}(A):=H^2(I)\cap H_0^1(I),\\
Av :=  v_{xx},\ \forall v\in\mathcal{D}(A),
\end{cases}
\end{equation*}
and
\begin{equation*}
\begin{cases}
\mathcal{D}(\bar{A}):=\{v\in H_\alpha^1(I):(x^{\alpha}v_x)_x\in L^2(I) ~\text{and}~ BC_\alpha(v)=0 \},\\
\bar{A}v :=  (x^{\alpha}v_x)_x,\ \forall v\in\mathcal{D}(\bar{A}),\,\alpha\in(0,2),
\end{cases}
\end{equation*}
where
$$
\begin{array}{ll}
H^1_\alpha(I):=\bigg\{v\in L^2(I):v \text{ is absolutely continuous in}~ I,
 x^{\frac{\alpha}{2}}v_x\in L^2(I)\,\text{and}~\,v(1)=0 \bigg\},
\end{array}
$$
and
\begin{equation*}
BC_\alpha(v)=
\begin{cases}
v_{|_{x=0}}, &\alpha\in[0,1),\\
(x^{\alpha}v_x)_{|_{x=0}}, &\alpha\in[1,2). 
\end{cases}
\end{equation*}
The first purpose of this paper is to study the controllability of the following linear coupled system:
\begin{equation}
\label{eq:main1}
\begin{cases}
y_t = A y + ay+ bz+ \chi_E\chi_{G_1} u, & \left( x,t\right) \in I\times(0,T),   \\[2mm]
z_t = \bar{A} z + cy+ dz+ \chi_E\chi_{G_2} u, & \left( x,t\right) \in I\times(0,T),   \\[2mm]
y\left(x, 0\right) =y_{0},z\left(x, 0\right) =z_{0}, &  x\in I,\\[2mm]
(0<\alpha<1)\
\begin{cases}
y(1,t)=z(1,t) = 0, & t\in \left(0,T\right), \\[2mm]
y(0,t)=z(0,t) = 0, & t\in \left(0,T\right),
\end{cases}\\[2mm]
(1\leq \alpha<2)\
\begin{cases}
y(1,t)=z(1,t) = 0, & t\in \left(0,T\right), \\[2mm]
(x^\alpha z_x)(0,t)=y(0,t) = 0, & t\in \left(0,T\right),
\end{cases}
\end{cases}
\end{equation}
here, $u\in L^\infty(0,T;L^2(G_1\cup G_2))$ is the control, $(y, z)$ is the state variable, $(y_0, z_0) \in \big( L^2(I) \big)^2$ is any given initial value, $a,b,c,d\in L^\infty(0,T;\R)$, $E$ be measurable subset with positive measure of $[0, T]$. By \cite{evans2022partial} and \cite{cannarsa2005}, one can check that for any $u\in L^\infty(0,T;L^2(G_1\cup G_2))$, systems (\ref{eq:main1}) and (\ref{eq:main}) admit a unique solution $(y, z)$ in the class of
$$
(y, z)\in \big(C(0,T;L^2(I))\big)^2\cap \big(L^2(0,T;H_0^1(I))\times L^2(0,T;H_\alpha^1(I))\big).
$$
\begin{theorem}
  \label{thm:main1}
Let $T>0$, $\alpha\in(0,2)$, $\sigma>0$ (which will be defined later, see Lemma \ref{lemma-A2}). Supposed $E$ is measurable subset of $[0,T]$ with positive measure. The coupled system \eqref{eq:main1} is null controllable. That is,
for each initial data $(y_0,z_0)\in \big(L^2(I)\big)^2$, there is a control $u$ in the space $L^\infty(0,T;L^2(G_1\cup G_2))$ such that
the solution $(y,z)$ of the coupled system \eqref{eq:main1} satisfies $y(T) =z(T)= 0$
in $I$. Moreover,there is a constant $C=C(T,I,\alpha,\sigma, |E|,G_1,G_2)>0$ and the control $u$ satisfies the following
estimate
\begin{equation}
  \label{eq:control est1}
  \|u\|_{L^\infty(0,T;L^2(G_1\cup G_2))} \le C  (\|y_0\|+\|z_0\|).
\end{equation}
\end{theorem}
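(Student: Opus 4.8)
The plan is to obtain null controllability by duality, reducing the whole problem to an observability inequality for the adjoint of \eqref{eq:main1} and then proving that inequality via the spectral inequalities announced in the statement rather than via Carleman estimates. Denoting the adjoint variables by $(\varphi,\psi)$, the adjoint system is the backward coupled system
\begin{equation*}
\begin{cases}
-\varphi_t = A\varphi + a\varphi + c\psi, & (x,t)\in I\times(0,T),\\[1mm]
-\psi_t = \bar{A}\psi + b\varphi + d\psi, & (x,t)\in I\times(0,T),
\end{cases}
\end{equation*}
with terminal data $(\varphi(T),\psi(T))=(\varphi_T,\psi_T)\in\big(L^2(I)\big)^2$ and the boundary conditions dual to those of \eqref{eq:main1}. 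Since the control enters \eqref{eq:main1} through $\chi_E\chi_{G_1}u$ and $\chi_E\chi_{G_2}u$ with $G_1\cap G_2=\emptyset$, the duality pairing shows that null controllability is equivalent to the observability inequality
\begin{equation*}
\|\varphi(0)\|+\|\psi(0)\| \le C\int_E\big(\|\varphi\|_{L^2(G_1)}+\|\psi\|_{L^2(G_2)}\big)\,dt ;
\end{equation*}
it is the $L^1$-in-time character of the right-hand side (integration only over the measurable set $E$) that is ultimately responsible for the $L^\infty(0,T;L^2(G_1\cup G_2))$ bound \eqref{eq:control est1}.

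First I would use that $-A$ and $-\bar{A}$ are self-adjoint and positive with compact resolvent, hence admit orthonormal eigenbases $\{e_j\}$ and $\{\bar e_j\}$ with eigenvalues $0<\lambda_1\le\lambda_2\le\cdots$ and $0<\bar\lambda_1\le\bar\lambda_2\le\cdots$. The two decisive ingredients are the spectral inequalities on the respective observation windows: on $G_1$ the non-degenerate operator satisfies a Lebeau--Robbiano estimate
\begin{equation*}
\Big\|\sum_{\lambda_j\le\lambda}a_je_j\Big\| \le Ce^{C\sqrt{\lambda}}\Big\|\sum_{\lambda_j\le\lambda}a_je_j\Big\|_{L^2(G_1)},
\end{equation*}
while on $G_2$ the degenerate operator $\bar{A}$ obeys the analogous inequality with the exponent $\lambda^{\sigma}$, where $\sigma>0$ is the constant of Lemma~\ref{lemma-A2} dictated by the eigenvalue asymptotics induced by the degeneracy $\alpha\in(0,2)$. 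I would then run the telescoping/iteration argument of the spectral method adapted to measurable time sets: fixing a Lebesgue density point of $E$ and a shrinking sequence of nested subintervals around it, at each scale the two spectral inequalities bound the low-frequency parts of $\varphi$ and $\psi$ by the observations on $G_1$ and $G_2$, while the parabolic dissipation forces the high-frequency tails to decay; summing the resulting telescoping inequalities yields observability at an interior time, which then propagates to $t=0$ by dissipative decay and a Gronwall estimate. The bounded potentials and coupling coefficients $a,b,c,d\in L^\infty$ enter only as lower-order perturbations, absorbed through Duhamel's formula into the dissipative decay and the Gronwall constants at the expense of enlarging $C$.

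The main obstacle is the simultaneous presence of the two distinct spectral regimes inside a single iteration: the exponent $\tfrac12$ governing $A$ on $G_1$ and the degenerate exponent $\sigma$ governing $\bar{A}$ on $G_2$ force the geometric sequences of frequency cutoffs and of interval lengths to be chosen compatibly for both components at once — effectively controlled by the slower of the two decay rates — and one must verify that the coupling terms $c\psi$ and $b\varphi$ do not spoil the high-frequency decay that makes the telescoping series summable. This is precisely the difficulty flagged in the introduction, where a single Carleman weight cannot simultaneously accommodate the degenerate and the non-degenerate block, and it is the reason for working with spectral inequalities instead.

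Once observability is in hand, I would close by minimizing over $(\varphi_T,\psi_T)$ the convex functional
\begin{equation*}
\cJ(\varphi_T,\psi_T)=\frac12\Big(\int_E\big(\|\varphi\|_{L^2(G_1)}+\|\psi\|_{L^2(G_2)}\big)\,dt\Big)^2+\langle y_0,\varphi(0)\rangle+\langle z_0,\psi(0)\rangle .
\end{equation*}
The observability inequality bounds the linear terms by the seminorm $\int_E(\|\varphi\|_{L^2(G_1)}+\|\psi\|_{L^2(G_2)})\,dt$, so $\cJ$ is coercive in that seminorm and attains a minimum at some $(\hat\varphi,\hat\psi)$ in the associated completion. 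Writing $\Lambda:=\int_E(\|\hat\varphi\|_{L^2(G_1)}+\|\hat\psi\|_{L^2(G_2)})\,dt$, its Euler--Lagrange equation produces a control $u$ supported on $E$ whose $L^2(G_1\cup G_2)$-norm is, for a.e.\ $t$, at most $\Lambda$; this control drives $(y,z)$ to $(0,0)$ at time $T$. Since coercivity gives $\Lambda\le C(\|y_0\|+\|z_0\|)$, the squared-$L^1$ structure of the functional delivers exactly $u\in L^\infty(0,T;L^2(G_1\cup G_2))$ together with the estimate \eqref{eq:control est1}, with $C=C(T,I,\alpha,\sigma,|E|,G_1,G_2)$.
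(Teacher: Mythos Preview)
Your proposal is correct and follows essentially the same route as the paper: prove the $L^1$-in-time observability inequality for the adjoint system by combining the two spectral inequalities (Lemmas~\ref{lemma-A1} and~\ref{lemma-A2}) with a telescoping argument at a Lebesgue density point of $E$, then deduce null controllability with an $L^\infty$ control via HUM. The paper packages the spectral/dissipation step into a single interpolation inequality of the form $\|p(t)\|^2+\|w(t)\|^2\le Ke^{K(T-t)^{\sigma/(\sigma-1)}}(\|p(t)\|^2_{L^2(G_2)}+\|w(t)\|^2_{L^2(G_1)})^{1/2}(\|p_T\|^2+\|w_T\|^2)^{1/2}$ before running the telescoping sum, and reconciles the two spectral exponents by invoking the eigenvalue asymptotics (Lemma~\ref{lemma-lambda}) so that only the worse exponent $\sigma$ survives; your sketch anticipates exactly this balancing issue and resolves it the same way.
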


The second purpose of this paper is to study the controllability of the following linear coupled system:
\begin{equation}
\label{eq:main}
\begin{cases}
y_t = A y + ay+ bz+ \chi_E\chi_{G_1} u, & \left( x,t\right) \in I\times(0,T),   \\[2mm]
z_t = \bar{A} z + cy+ dz+ \chi_F\chi_{G_2} u, & \left( x,t\right) \in I\times(0,T),   \\[2mm]
y\left(x, 0\right) =y_{0},z\left(x, 0\right) =z_{0}, &  x\in I,\\[2mm]
(0<\alpha<1)\
\begin{cases}
y(1,t)=z(1,t) = 0, & t\in \left(0,T\right), \\[2mm]
y(0,t)=z(0,t) = 0, & t\in \left(0,T\right),
\end{cases}\\[2mm]
(1\leq \alpha<2)\
\begin{cases}
y(1,t)=z(1,t) = 0, & t\in \left(0,T\right), \\[2mm]
(x^\alpha z_x)(0,t)=y(0,t) = 0, & t\in \left(0,T\right),
\end{cases}
\end{cases}
\end{equation}
here, for all $i\in\N^+$, $E=\cup_{i=1}^\infty E_i$, $E_i=(t_{2i-1},t_{2i})$, $F=\cup_{i=1}^\infty F_i$, $F_i=(t_{2i},t_{2i+1})$, such that $(0,T)=E\cup F$ and $E\cap F=\emptyset$. $u\in L^2(0,T;L^2(G_1\cup G_2))$ is the control, $(y, z)$ is the state variable, $(y_0, z_0) \in \big( L^2(I) \big)^2$ is any given initial value, $a,b,c,d\in L^\infty(0,T;\R)$. By \cite{evans2022partial} and \cite{cannarsa2005}, one can check that for any $u\in L^2(0,T;L^2(G_1\cup G_2))$, systems (\ref{eq:main1}) and (\ref{eq:main}) admit a unique solution $(y, z)$ in the class of
$$
(y, z)\in \big(C(0,T;L^2(I))\big)^2\cap \big(L^2(0,T;H_0^1(I))\times L^2(0,T;H_\alpha^1(I))\big).
$$

We suppose the following conditions:
\begin{itemize}
  \item [($H_1$)] There exists a positive constant $l_0$ and an open interval $E_{i_0}, i_0\in\N^+$, such that either
$c(t) \geq l_0$ for all $t \in E_{i_0}$, or $c(t)\leq -l_0$ for all $t\in E_{i_0}$.
  \item [($H_2$)] There exists a positive constant $\bar{l}_0$ and an open interval  $F_{i_0}, i_0\in\N^+$, such that either
$b(t) \geq \bar l_0$ for all $t \in F_{i_0}$, or $b(t)\leq -\bar l_0$ for all $t\in F_{i_0}$.
\end{itemize}
\begin{theorem}
  \label{thm:main}
Let $T>0$, $\alpha\in(0,2)$, $\sigma>0$ (which will be defined later, see Lemma \ref{lemma-A2}). If the conditions $(H_1)$ or $(H_2)$ hold, the coupled system \eqref{eq:main} is null controllable. That is, for given initial value $(y_0,z_0)\in \big(L^2(I)\big)^2$, there is a control $u\in L^2(0,T;L^2(G_1\cup G_2))$ such that
the solution $(y,z)$ of the coupled system \eqref{eq:main} satisfies
$y(T) =z(T) =0, ~\text{in}~ I$.
Moreover, there is a constant $L>0$, such that the control $u$ satisfies the following estimate
\begin{equation}
  \label{eq:control est}
  \|u\|^2_{L^2(0,T;L^2(G_1\cup G_2))} \le L  (\|y_0\|^2+\|z_0\|^2).
\end{equation}
\end{theorem}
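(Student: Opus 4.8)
The plan is to obtain null controllability from an observability inequality for the adjoint system, precisely as in the proof of Theorem~\ref{thm:main1}. By the HUM duality argument, the assertion of Theorem~\ref{thm:main} together with the cost bound~\eqref{eq:control est} is equivalent to an observability inequality for the backward adjoint system
\begin{equation*}
\begin{cases}
-\varphi_t = A\varphi + a\varphi + c\psi, & (x,t)\in I\times(0,T),\\
-\psi_t = \bar{A}\psi + b\varphi + d\psi, & (x,t)\in I\times(0,T),\\
\varphi(T)=\varphi_T,\quad \psi(T)=\psi_T, & x\in I,
\end{cases}
\end{equation*}
supplemented with the boundary conditions dual to those in~\eqref{eq:main}; namely, there must exist $L>0$ such that
\begin{equation*}
\|\varphi(0)\|^2+\|\psi(0)\|^2 \le L\Big(\int_E\int_{G_1}|\varphi|^2\,dx\,dt+\int_F\int_{G_2}|\psi|^2\,dx\,dt\Big).
\end{equation*}
The entire problem thus reduces to this inequality. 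I would prove the case $(H_1)$ in detail; the case $(H_2)$ follows symmetrically under the exchange $\varphi\leftrightarrow\psi$, $E\leftrightarrow F$, $G_1\leftrightarrow G_2$, $c\leftrightarrow b$, and $A\leftrightarrow\bar{A}$.

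Under $(H_1)$ the coupling coefficient satisfies $|c|\ge l_0$ on the whole interval $E_{i_0}$, on which the first control, and hence the observation of $\varphi$ on $G_1$, is active. The key point is that on $E_{i_0}$ the first adjoint equation can be read as $c\psi=-\varphi_t-A\varphi-a\varphi$, so that, since $|c|\ge l_0$, an estimate of $\varphi$ (together with its time and space derivatives) on $E_{i_0}\times G_1$ yields an estimate of $\psi$ on a slightly smaller cylinder $E_{i_0}'\times G_1'$. In this way the observation of $\varphi$ alone produces a simultaneous observation of both components on a common cylinder, which places us back in the framework underlying Theorem~\ref{thm:main1}, where $\varphi$ and $\psi$ are observed over the same time set. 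The coupled observability established there then bounds $\|\varphi(0)\|^2+\|\psi(0)\|^2$ by $\|\varphi\|_{L^2(E_{i_0}\times G_1)}^2$, which is dominated by the right-hand side above; note that only the interval $E_{i_0}$, and not the control active on $F$, is needed.

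The engine behind the coupled observability is the spectral inequality of Lemma~\ref{lemma-A2}, valid for both $A$ and the degenerate operator $\bar{A}$ on any nonempty open set and with the degenerate exponent governed by $\sigma$: truncating the eigenfunction expansions at frequency $\mu$, the $L^2(I)$-norm is controlled by the $L^2(G)$-norm up to a factor $e^{C\mu^\sigma}$. Following the Lebeau--Robbiano scheme, I would split $(\varphi,\psi)$ into low- and high-frequency components, recover the low-frequency part from the observation, and absorb the high-frequency part by parabolic dissipation, with a telescoping over subintervals upgrading the estimate to the measurable-in-time observation and the factor $\sigma$ entering the final constant $L$.

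The main obstacle is the transfer of observation from $\varphi$ to $\psi$ on $E_{i_0}$. The pointwise inversion $\psi=c^{-1}(-\varphi_t-A\varphi-a\varphi)$ is only formal, because bounding $\varphi$ in a parabolic $H^{2,1}$-norm by interior regularity reintroduces the source $c\psi$ on the right-hand side; the transfer must therefore be performed on the frequency-truncated projections, where the spectral inequality and the dissipation estimate close the argument. In the symmetric case $(H_2)$ one instead transfers observation from $\psi$ to $\varphi$ through $b\varphi=-\psi_t-\bar{A}\psi-d\psi$, which brings in the degenerate operator $\bar{A}$ and an observation set $G_2$ that may approach the degeneracy point $x=0$; obtaining the corresponding regularity uniformly for $\alpha\in(0,1)$ and $\alpha\in[1,2)$, through the weighted space $H^1_\alpha(I)$ and the boundary condition $BC_\alpha$, is the most delicate part of the argument.
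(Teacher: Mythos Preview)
Your plan shares the key ingredients with the paper --- the spectral inequalities of Lemmas~\ref{lemma-A1}--\ref{lemma-A2}, the use of $(H_1)$ or $(H_2)$ to transfer information between the two components on the interval $E_{i_0}$ (resp.\ $F_{i_0}$), and a Lebeau--Robbiano iteration --- but the organization is reversed and the transfer step, as you describe it, is not closed. The paper does \emph{not} first prove a global observability inequality and then invoke HUM. It works directly on the primal system: for data truncated to $L^2(\bar X_k)\times L^2(X_k)$ it proves a finite-dimensional observability estimate (Propositions~\ref{pro-H1}--\ref{pro-H2}), uses this by duality to build a partial control $u^{(k)}$ annihilating the first $\rho_k$ modes with explicit cost (Proposition~\ref{pro-partial}), alternates control phases with free-evolution phases on which Proposition~\ref{pro-decay} gives exponential decay of the surviving high modes, and telescopes. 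The observability inequality of Theorem~\ref{thm:obs-inq} is obtained \emph{afterwards}, as a corollary of null controllability.

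The decisive difference is the transfer mechanism. In Step~3 of Proposition~\ref{pro-H1} the paper computes $(\xi p w)_t$ with a cutoff $\xi\in C_0^\infty(E_{i_0})$, integrates over $I\times E_{i_0}$, and uses $|c|\ge l_0$ with Young's inequality to obtain the \emph{global} bound
\[
\int_{s_1}^{s_2}\!\!\int_0^1 p^2\,dx\,dt \;\le\; C(\bar\lambda_k^2+\lambda_k^2)\int_{E_{i_0}}\!\!\int_0^1 w^2\,dx\,dt
\]
for finite-frequency data. No local parabolic regularity enters; the spatial localization to $G_1$ is applied only at the end, and only to $w$, via Lemma~\ref{lemma-A1}. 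Your pointwise inversion $\psi=c^{-1}(-\varphi_t-A\varphi-a\varphi)$ instead requires a local $H^{2,1}$ bound on $\varphi$, and as you note, interior regularity reintroduces $c\psi$ on the right-hand side. Saying the transfer ``must be performed on the frequency-truncated projections'' does not by itself break the circularity: $\varphi$ and $\psi$ are expanded in \emph{different} eigenbases, so truncation does not turn the equation into a finite ODE system that one can invert locally. The multiplier identity used in the paper avoids this entirely.

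A minor point: even if your transfer succeeded, it would deliver both components on the \emph{same} spatial set $G_1'$, which is not literally the setting of Theorem~\ref{thm:obs-inq1} (there $w$ is observed on $G_1$ and $p$ on $G_2$). The variant is still tractable, but you would have to redo Proposition~3.2 with $G_1=G_2=G_1'$ rather than quote Theorem~\ref{thm:obs-inq1}. Your observation that under $(H_1)$ only $E_{i_0}\times G_1$ is used, and the $F$-observation is not needed, is correct and matches the paper.
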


On the other hand, we have the following negative result for the null controllability of coupled system (\ref{eq:main}).
\begin{theorem}
  \label{thm:negative-main}
The coupled system (\ref{eq:main}) is not null controllable at time $T$ provided that one of the following conditions is satisfied:
\begin{itemize}
  \item [(1)] $\chi_E\equiv1$, $c(\cdot)=0$ in $(0,T)$, a.e.;
  \item [(2)] $\chi_F\equiv1$, $b(\cdot)=0$ in $(0,T)$, a.e.
\end{itemize}
\end{theorem}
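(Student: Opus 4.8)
The plan is to observe that each of the two conditions forces one of the two state components to satisfy a \emph{closed} (uncontrolled and uncoupled) homogeneous parabolic equation, whose terminal value is rigidly determined by its own initial datum. I would then show that this terminal value cannot be annihilated when the initial datum is nonzero, which directly contradicts the definition of null controllability. Since null controllability requires a steering control for \emph{every} pair $(y_0,z_0)$, it suffices to exhibit a single $z_0$ (resp.\ $y_0$) whose target state is forced to be nonzero no matter how $u$ is chosen.

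Consider condition (1). Here $\chi_E\equiv1$ gives $F=\emptyset$, so $\chi_F\chi_{G_2}u\equiv0$, while $c\equiv0$ removes the coupling of $y$ into the second equation. Thus the $z$-component of \eqref{eq:main} reduces to the autonomous homogeneous problem
\begin{equation*}
z_t=\bar{A}z+d(t)z,\qquad z(0)=z_0,
\end{equation*}
in which neither the control $u$ nor the state $y$ appears. Because $d$ is a scalar function of $t$ alone, the integrating factor $D(t):=\int_0^t d(s)\,ds$ separates it from the spatial operator: writing $z(t)=e^{D(t)}w(t)$ reduces the equation to $w_t=\bar{A}w$, whence $w(t)=e^{t\bar{A}}z_0$ and
\begin{equation*}
z(T)=e^{D(T)}\,e^{T\bar{A}}z_0 .
\end{equation*}
The operator $\bar{A}$ is self-adjoint and negative with compact resolvent and discrete spectrum $\{\mu_k\}$, $\mu_k<0$ (the eigenvalues being those underlying the spectral inequality, cf.\ Lemma~\ref{lemma-A2}); consequently $e^{T\bar{A}}$ has the strictly positive eigenvalues $e^{T\mu_k}$ and is injective. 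Since $e^{D(T)}\neq0$, we conclude that $z(T)=0$ if and only if $z_0=0$. Choosing any $z_0\neq0$ (for instance a single eigenfunction of $\bar{A}$) therefore yields $z(T)\neq0$ for every admissible control $u$ and every $y_0$, so \eqref{eq:main} is not null controllable.

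Condition (2) is handled symmetrically: $\chi_F\equiv1$ forces $E=\emptyset$, killing $\chi_E\chi_{G_1}u$, while $b\equiv0$ decouples $z$ from the first equation, so the $y$-component solves $y_t=Ay+a(t)y$ with $y(0)=y_0$. The same integrating-factor computation gives $y(T)=e^{\int_0^T a(s)\,ds}\,e^{TA}y_0$, and injectivity of the Dirichlet heat semigroup $e^{TA}$ (immediate, since $A$ has eigenvalues $-(k\pi)^2$) shows that $y(T)=0$ forces $y_0=0$; any $y_0\neq0$ then obstructs null controllability. The only nonroutine ingredient in either case is the injectivity of the \emph{degenerate} semigroup $e^{T\bar{A}}$, that is, backward uniqueness for the degenerate parabolic operator; I would obtain this from the self-adjoint spectral decomposition of $\bar{A}$ already needed for the positive results, rather than from any new estimate.
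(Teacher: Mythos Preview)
Your proposal is correct and follows essentially the same approach as the paper: both observe that under condition~(1) the $z$-equation decouples into an uncontrolled homogeneous problem (the paper eliminates $d$ via the same integrating-factor change of variable $\tilde z(t)=e^{-\int_0^t d}\,z(t)$ that you use), and conclude that a nonzero $z_0$ cannot be steered to zero. Your write-up is in fact slightly more explicit than the paper's, since you justify that last step through the injectivity of $e^{T\bar A}$ from its spectral decomposition, whereas the paper simply asserts it.
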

Next, we study the following adjoint system of coupled systems (\ref{eq:main1}) and (\ref{eq:main}):
\begin{equation}
\label{eq:adjoint}
\begin{cases}
p_t = -\bar{A} p - dp- bw, & \left( x,t\right) \in I\times(0,T),   \\[2mm]
w_t = -A w -cp-aw, & \left( x,t\right) \in I\times(0,T),   \\[2mm]
p\left(x, T\right) =p_T,w\left(x, T\right) =w_T, &  x\in I,\\[2mm]
(0<\alpha<1)\
\begin{cases}
p(1,t)=w(1,t) = 0, & t\in \left(0,T\right), \\[2mm]
p(0,t)=w(0,t) = 0, & t\in \left(0,T\right),
\end{cases}\\[2mm]
(1\leq \alpha<2)\
\begin{cases}
p(1,t)=w(1,t) = 0, & t\in \left(0,T\right), \\[2mm]
(x^\alpha p_x)(0,t)=w(0,t) = 0, & t\in \left(0,T\right),
\end{cases}
\end{cases}
\end{equation}
where $(p_T,w_T)\in \big(L^2(I)\big)^2$ is any given terminal value.
\begin{theorem}
  \label{thm:obs-inq1}
Let $T>0$, $\alpha\in(0,2)$, $\sigma>0$ (which will be defined later, see Lemma \ref{lemma-A2}). Supposed $E$ is measurable subset of $[0,T]$ with positive measure. Then there exists a constant $C=C(T,I,\alpha,\sigma, |E|,G_1,G_2)$ such that the following observability inequality holds: for any $(p_T,w_T)\in \big(L^2(I)\big)^2$,
\begin{equation}
  \label{eq:obs-inq1} 
  \|p(0)\|^2+\|w(0)\|^2
\le C\left[\|\chi_E\chi_{G_1}w\|^2_{L^1(0,T;L^2(I))}+\|\chi_E\chi_{G_2}p\|^2_{L^1(0,T;L^2(I))}\right]. 
\end{equation}
\end{theorem}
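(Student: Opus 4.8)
The plan is to prove the observability inequality \eqref{eq:obs-inq1} by the Lebeau--Robbiano spectral method adapted to measurable time sets, in the spirit of \cite{wang2008null,apraiz2014observability}, rather than by Carleman estimates. The decisive structural feature here is that in the adjoint system \eqref{eq:adjoint} \emph{both} components are directly observed, namely $w$ on $G_1$ and $p$ on $G_2$; consequently the coupling terms $bw$, $cp$ together with the zeroth-order terms $aw$, $dp$ enter only as bounded (in $t$, through $a,b,c,d\in L^\infty$) perturbations of the block-diagonal principal part $\mathrm{diag}(-\bar A,-A)$, and there is no need to propagate observability from one component to the other. Accordingly I would organize the argument around the two eigenbases of $L^2(I)$: the Dirichlet eigenpairs $\{(\mu_k,\phi_k)\}$ of $-A$ and the degenerate eigenpairs $\{(\nu_k,\psi_k)\}$ of $-\bar A$ (the latter expressed through Bessel functions in the earlier spectral analysis), both with eigenvalues growing like $k^2$. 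I would then record the two spectral inequalities established earlier: there is $C>0$ so that every $f=\sum_{\mu_k\le\lambda}a_k\phi_k$ obeys $\|f\|\le Ce^{C\sqrt\lambda}\|f\|_{L^2(G_1)}$ and every $g=\sum_{\nu_k\le\lambda}b_k\psi_k$ obeys $\|g\|\le Ce^{C\sqrt\lambda}\|g\|_{L^2(G_2)}$, with the growth quantified by the constant $\sigma$ of Lemma \ref{lemma-A2}. Writing $\Pi_\lambda$ for the joint spectral projection of the pair $(p,w)$ onto frequencies $\le\lambda$, these combine into $\|\Pi_\lambda(p,w)\|\le Ce^{C\sqrt\lambda}\big(\|\Pi_\lambda w\|_{L^2(G_1)}+\|\Pi_\lambda p\|_{L^2(G_2)}\big)$.

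Next I would convert this time-pointwise spectral inequality into a quantitative interpolation inequality for the flow. Reversing time via $s=T-t$ turns \eqref{eq:adjoint} into a forward dissipative system, so the high-frequency part $(I-\Pi_\lambda)(p,w)$ decays like $e^{-c\lambda(s_2-s_1)}$ across a time interval, while a Gronwall estimate absorbs the bounded coefficients. Splitting $(p,w)=\Pi_\lambda(p,w)+(I-\Pi_\lambda)(p,w)$ and combining the low-frequency spectral bound with the high-frequency dissipation gives, for $s_1<s_2$,
\[
\|(p,w)(s_2)\|\le Ce^{C\sqrt\lambda}\big(\|w(s_2)\|_{L^2(G_1)}+\|p(s_2)\|_{L^2(G_2)}\big)+Ce^{-c\lambda(s_2-s_1)}\|(p,w)(s_1)\|.
\]
Fixing a Lebesgue density point $\ell$ of $E$ (in the $s$ variable), I would then choose a geometric sequence of times accumulating at $\ell$, and at each scale integrate the pointwise estimate over the portion of $E$ lying in the corresponding interval — whose relative measure is close to $1$ near $\ell$ — optimize in $\lambda$, and sum the resulting telescoping series. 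This bounds $\|(p,w)\|$ at the density-point time by $C\|\chi_E\chi_{G_1}w\|^2_{L^1(0,T;L^2(I))}+C\|\chi_E\chi_{G_2}p\|^2_{L^1(0,T;L^2(I))}$, and a final dissipative energy estimate transfers the bound to $t=0$ (where the reversed-time flow has dissipated most), yielding \eqref{eq:obs-inq1} with $C=C(T,I,\alpha,\sigma,|E|,G_1,G_2)$.

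The main obstacle I anticipate is twofold. First, the degenerate operator $\bar A$ forces its spectral inequality on $G_2$ to be proved through the Bessel-function structure of the $\psi_k$ and their behavior near the degeneracy point $x=0$; I would lean on the earlier spectral estimate (Lemma \ref{lemma-A2}), taking care that $G_2$ need not meet $x=0$ and that the $e^{C\sqrt\lambda}$-type bound survives for the two distinct operators simultaneously. Second, the density-point telescoping must be run \emph{jointly} for the coupled pair $(p,w)$, so the perturbations $bw$ and $cp$ have to be controlled uniformly across all scales; because both components are observed here this is achievable by Gronwall, but the delicate point is the bookkeeping of constants in the summation step — ensuring the exponential weights $e^{C\sqrt\lambda}$ and $e^{C/(s_2-s_1)}$ are beaten by the dissipation $e^{-c\lambda(s_2-s_1)}$ so that the telescoping series converges to a finite constant.
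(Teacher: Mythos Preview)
Your proposal is correct and follows essentially the same route as the paper: combine the two spectral inequalities with a high-frequency dissipation estimate for the coupled flow to obtain an interpolation inequality, then run the Lebesgue-density-point telescoping over $E$ and sum. One detail to fix: the degenerate spectral inequality (Lemma~\ref{lemma-A2}) carries exponent $\bar\lambda^{\sigma}$ with $\sigma\ge 3/4$, not $\sqrt{\bar\lambda}$, so after optimizing in $\lambda$ the interpolation weight is $e^{K(T-t)^{\sigma/(\sigma-1)}}$ rather than $e^{C/(s_2-s_1)}$, and the geometric ratio in the telescoping must be chosen as $q=\big((C+\tfrac12)/(C+1)\big)^{(1-\sigma)/\sigma}$ accordingly.
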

\begin{theorem}
  \label{thm:obs-inq}
Let $T>0$, $\sigma>0$ (which will be defined later, see Lemma \ref{lemma-A2}). Suppose  $E=\cup_{i=1}^\infty E_i$, $E_i=(t_{2i-1},t_{2i})$, $F=\cup_{i=1}^\infty F_i$, $F_i=(t_{2i},t_{2i+1})$, such that $(0,T)=F\cup E$ and $E\cap F=\emptyset$. Then there exists a constant $C>0$ such that the following observability inequality holds: for any $(p_T,w_T)\in \big(L^2(I)\big)^2$,
\begin{equation}
  \label{eq:obs-inq} 
  \|p(0)\|^2+\|w(0)\|^2
\le C\left[\|\chi_E\chi_{G_1}w\|^2_{L^2(0,T;L^2(I))}+\|\chi_F\chi_{G_2}p\|^2_{L^2(0,T;L^2(I))}\right]. 
\end{equation}
\end{theorem}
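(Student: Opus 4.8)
The plan is to establish \eqref{eq:obs-inq} by a Lebeau--Robbiano spectral argument carried out simultaneously on the two components and synchronized with the switching partition $\{E_i\}\cup\{F_i\}$. First I would pass to the reversed time $\tau=T-t$, under which \eqref{eq:adjoint} turns into a forward coupled parabolic system whose principal part $\mathrm{diag}(\bar A,A)$ is dissipative and whose coupling $\begin{pmatrix} d & b\\ c & a\end{pmatrix}$ is a bounded, spatially constant, zeroth-order perturbation; the target quantity $\|p(0)\|^2+\|w(0)\|^2$ becomes the squared norm of the most regularized state, at reversed time $\tau=T$, which is exactly the state whose observability the method delivers.

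The only analytic inputs are two spectral inequalities and the dissipation of high modes. Writing $P_\lambda,Q_\lambda$ for the spectral projectors onto the eigenspaces of $A,\bar A$ with eigenvalue $\le\lambda$, the spectral inequalities provided by Lemma~\ref{lemma-A2} read $\|P_\lambda w\|\le Ce^{C\lambda^\sigma}\|P_\lambda w\|_{L^2(G_1)}$ and $\|Q_\lambda p\|\le Ce^{C\lambda^\sigma}\|Q_\lambda p\|_{L^2(G_2)}$ with $\sigma<1$, while the forward dissipation gives $\|(I-P_\lambda)w(\tau_2)\|\le e^{-\lambda(\tau_2-\tau_1)}\|(I-P_\lambda)w(\tau_1)\|$ for $\tau_2\ge\tau_1$ and its analogue for $p$, up to the Gronwall factor produced by the bounded coupling.

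The heart of the argument is the telescoping iteration. Along a sequence of reversed subintervals of lengths $\ell_k$ exhausting the $E_i$, at each frequency level $\lambda_k$ I would bound $\|P_{\lambda_k}w\|$ by the observation $\|w\|_{L^2(E_i\cap G_1)}$ via the first spectral inequality, after replacing $\|P_{\lambda_k}w\|_{L^2(G_1)}$ with $\|w\|_{L^2(G_1)}+\|(I-P_{\lambda_k})w\|$ and absorbing the dissipating tail; symmetrically I would bound $\|Q_{\lambda_k}p\|$ on the $F_i$ from $\|p\|_{L^2(F_i\cap G_2)}$. Choosing $\lambda_k$ to grow geometrically and balancing the spectral cost $e^{C\lambda_k^\sigma}$ against the dissipation gain $e^{-\lambda_k\ell_k}$---summable precisely because $\sigma<1$---makes the series telescope, and propagating each intermediate bound forward to $\tau=T$ by dissipation yields \eqref{eq:obs-inq}.

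The main obstacle is the coupling across mismatched time sets: on each $E_i$ only $w$ is observed, yet its equation carries the source $cp$ whose low-frequency part $P_{\lambda_k}p$ cannot be discarded, and symmetrically on each $F_i$. I would resolve this by interleaving the two iterations so that the unknown source entering the $w$-step on $E_i$ is supplied by the bound just obtained for $p$ on the adjacent $F_{i-1}$ (and conversely), keeping every coupling contribution inside the geometrically summable tail. When this synchronization degenerates---e.g.\ if one family of intervals is too short---hypothesis $(H_1)$ (resp.\ $(H_2)$) gives the decisive shortcut: from $cp=w_\tau-Aw-aw$ with $|c|\ge l_0$ on $E_{i_0}$, interior parabolic regularity for the nondegenerate component $w$ converts the observation of $w$ on $E_{i_0}\cap G_1$ into a bound for $p$ on a slightly smaller set, so that both components are effectively observed on one common interval and the problem there reduces to the simultaneous case already treated by the method of Theorem~\ref{thm:obs-inq1}. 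Pushing the estimates across the degeneracy of $\bar A$ at $x=0$, where the larger exponent $\sigma$ narrows the convergence margin of the telescoping, is the delicate technical point.
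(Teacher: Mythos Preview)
Your route differs substantially from the paper's. The paper does \emph{not} prove \eqref{eq:obs-inq} by a direct Lebeau--Robbiano telescoping on the adjoint system. Instead it first establishes the null controllability of the state system (Theorem~\ref{thm:main}) by a Lebeau--Robbiano construction on the \emph{forward} problem, and then obtains \eqref{eq:obs-inq} in a few lines by the standard HUM duality: take $y_0=-w(0)$, $z_0=-p(0)$, let $u$ be the null control with $\|u\|^2\le L(\|y_0\|^2+\|z_0\|^2)$, insert this into the duality identity between \eqref{eq:main} and \eqref{eq:adjoint}, and Cauchy--Schwarz finishes the proof. All the spectral work is hidden in Theorem~\ref{thm:main}, whose key input (Propositions~\ref{pro-H1}--\ref{pro-H2}) is an observability estimate for the \emph{finite-dimensional} spectral truncations of the adjoint; there the hypothesis $(H_1)$ or $(H_2)$ is used from the outset, via an elementary cutoff-energy computation on the single interval $E_{i_0}$ (resp.\ $F_{i_0}$), to bound $\int p^2$ by $\int w^2$ (resp.\ the reverse).

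Your direct approach is more ambitious but, as written, does not close. The interleaving device---feeding the bound on $p$ from $F_{i-1}$ into the $w$-step on $E_i$---needs the coupling contribution to land in the geometrically decaying tail, yet at each step the low-frequency part of the source, $P_{\lambda_k}(cp)$, is of the \emph{same} order as the low-frequency part of $w$ and enjoys no dissipative gain; nothing in the proposal forces these cross terms to be summable. More importantly, you treat $(H_1)$/$(H_2)$ as a fallback for degenerate interval geometry, whereas it is the essential mechanism that lets one observed component recover the other: without it the problem collapses to the decoupled situation of Theorem~\ref{thm:negative-main}, where null controllability (and hence \eqref{eq:obs-inq}) fails. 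If you want a direct proof, you should invoke $(H_1)$ (or $(H_2)$) at the start on the fixed interval $E_{i_0}$ (or $F_{i_0}$), use it at each frequency level to transfer $p$ to $w$ there as in Proposition~\ref{pro-H1}, and only then run the telescoping; but at that point the detour through controllability and duality, as the paper does, is both shorter and cleaner.
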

Several remarks are given in order.
\begin{remark}
The original problem that we want to consider is the following problem: 
\begin{equation}\label{eq:origin}
    \begin{cases}
        y_t = A y + ay+ bz+ \chi_E\chi_{G_1} u, & \left( x,t\right) \in I\times(0,T),   \\[2mm]
z_t = \bar{A} z + cy+ dz+ \chi_F\chi_{G_2} u, & \left( x,t\right) \in I\times(0,T),   \\[2mm]
y\left(x, 0\right) =y_{0},z\left(x, 0\right) =z_{0}, &  x\in I,\\[2mm]
(0<\alpha<1)\
\begin{cases}
y(1,t)=z(1,t) = 0, & t\in \left(0,T\right), \\[2mm]
y(0,t)=z(0,t) = 0, & t\in \left(0,T\right),
\end{cases}\\[2mm]
(1\leq \alpha<2)\
\begin{cases}
y(1,t)=z(1,t) = 0, & t\in \left(0,T\right), \\[2mm]
(x^\alpha z_x)(0,t)=y(0,t) = 0, & t\in \left(0,T\right),
\end{cases}
    \end{cases}
\end{equation}
where $E, F\subset (0,T)$ are {positive measurable subsets} with $E\cup F=(0,T)$ and $E\cap F=\emptyset$, and the control $u\in L^2(0,T;L^2(I))$. It is clearly that \eqref{eq:origin} is a kind of switching control problem. Unfortunately, we do not solve this problem, but we get two special cases for this problem, i.e., the first problem \eqref{eq:main1} and the second problem \eqref{eq:main}. 
For system \eqref{eq:main1}, we have considered switching controls $u$ on the same measurable  set in time, in this case, we consider a more strong control problem with control $u$ belongs to space $L^\infty(0,T;L^2(I))$, then we obtain the controllability of the system \eqref{eq:main1}. For system \eqref{eq:main}, we have obtained switching controls $u\in L^2(0,T;L^2(I))$ on segmented time intervals.
\end{remark}

\begin{remark}
We derive observability inequality for measurable subsets in time directly from the adjoint system of system (\ref{eq:main1}). In our analysis, we employ spectral inequalities as the main tool, as indicated in the Lemmas \ref{lemma-A1} and \ref{lemma-A2}, which will be defined later, instead of Carleman estimates. The reason for this is that the construction of suitable Carleman weight functions, which is the main technique employed in Carleman inequalities, appears to be infeasible, as demonstrated in \cite{apraiz2013null}. Notably, an intriguing issue arises as the control regions $G_1$ and $G_2$ are also measurable subsets rather than open subsets. Unfortunately, we have yet to obtain a similar spectral inequality for this problem, as we must consider the degeneracy of the operator $\bar{A}$. For other observability inequalities related to measurable sets, see \cite{phung2013observability,apraiz2014observability,escauriaza2015observation}  and references therein.
\end{remark}
\begin{remark}
As for system (\ref{eq:main}), we utilize the Lebeau-Robbiano strategy, as outlined in \cite{miller2009direct,wang2008null} and references therein, based on spectral inequalities, see Lemmas \ref{lemma-A1} and \ref{lemma-A2}, which will be defined later, to establish the null controllability of the system under switching control for segmented time intervals and subsequently derive observability inequality. It is worth noting that the Theorem \ref{thm:obs-inq} implies special cases where either $E=\emptyset$ or $F=\emptyset$, in which only one control force is active in $G_2$ or $G_1$ for the coupled system (\ref{eq:main}).
\end{remark}

The following Sections of this paper are structured as follows. In Section 2, we present several supporting results. In Section 3, we establish observability inequality for the adjoint system corresponding to the coupled system (\ref{eq:main1}). More specifically, in Section 3.1, we present some observability results, and subsequently prove the observability inequality in Section 3.2. In Section 4, we investigate the null controllability of the adjoint system corresponding to the coupled system (\ref{eq:main}). In Section 4.1, we first provide additional observability results and then prove the null controllability in Section 4.2. Finally, we discuss the scenario of negative null controllability in Section 4.3 and provide the relevant observability inequality.

\section{Notations and Auxiliary Conclusions}
To begin with, we write
\[
0<\lambda_1\le \lambda_2\le \cdots
\]
for the eigenvalues of $-A$ with the zero Dirichlet boundary condition over $\partial I$, and $\{e_i\}_{i\ge 1}$ be the corresponding eigenfunctions such that $\|e_i\|=1$ for $i=1,2,\cdots$.

Write
\[
0<\bar{\lambda}_1\le \bar{\lambda}_2\le \cdots
\]
for the eigenvalues of $-\bar{A}$ with the boundary condition $BC_\alpha(v)=0$ over $\partial I$, and $\{\bar{e}_i\}_{i\ge 1}$ be the corresponding eigenfunctions such that $\|\bar{e}_i\|=1$ for $i=1,2,\cdots$.

Then for each $v\in D(A)$, we have
$$
v=\sum_{i=1}^\infty a_ie_i \mbox{ with } a_i\in \R,\ i\geq 1,\quad (-A)v=\sum_{i=1}^\infty \lambda_ia_ie_i,
$$
and define
\[
\cE_\lambda v = \sum_{\lambda_i\leq \lambda} a_ie_i,~\text{and}~
\cE^\perp_\lambda v = \sum_{\lambda_i>\lambda} a_ie_i.
\]
For each $\bar v\in D(\bar A)$, we have
$$
\bar{v}=\sum_{i=1}^\infty \bar{a}_i\bar{e}_i \mbox{ with } \bar{a}_i\in \R,\ i\geq 1,\quad (-\bar{A})\bar{v}=\sum_{i=1}^\infty \bar{\lambda}_i\bar{a}_i\bar{e}_i,
$$
and define
\[
\cE_{\bar{\lambda}} \bar{v} = \sum_{\bar\lambda_i\leq \bar\lambda} \bar{a}_i\bar{e}_i,~\text{and}~
\cE^\perp_{\bar{\lambda}} \bar{v} = \sum_{\bar\lambda_i>\bar\lambda} \bar{a}_i\bar{e}_i.
\]

For any positive integer $k$, set
$$
\begin{array}{lll}
X_k=\text{span}~\{e_1,e_2,\cdots,e_k\}\\[3mm]
\bar{X}_k=\text{span}~\{\bar{e}_1,\bar{e}_2,\cdots,\bar{e}_k\}.
\end{array}
$$
Denote by $\Pi_k$ the orthogonal projection from $L^2(I)$ to $X_k$, and $\bar{\Pi}_k$ the orthogonal projection from $L^2(I)$ to $\bar{X}_k$.

Next, we recall some known results. By \cite{buffe2018spectral}, one can get the spectral asymptotic formulas for the operator $A$ and $\bar{A}$.
\begin{lemma}
\label{lemma-lambda}
Set $\Omega$ be a bounded domain in $\R^d\ (d\geq 1)$,  there exists positive constants $C_1=C_1(\Omega)$ and $C_2=C_2(d)$, such
that the eigenvalues $\{\tilde{\lambda}_i\}_{i=1}^\infty$ of $\tilde{A}$ satisfy the following formula:
$$
\tilde{\lambda}_k\sim C_1k^{C_2},~\text{as}~ k\rightarrow\infty.
$$
\begin{itemize}
  \item If $\tilde{A}=-A=-\Delta$, then $C_2=\frac{2}{d}$,
  \item If $\tilde{A}=-\bar{A}=-\frac{\partial}{\partial x}\bigg(x^\alpha\frac{\partial}{\partial x}\bigg)$ with $\alpha\in (0,2)$, then $C_2=2$.
\end{itemize}
\end{lemma}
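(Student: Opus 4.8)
The plan is to prove the two asymptotics separately, since they rest on different mechanisms, but in each case to reduce the statement to a one-term Weyl-type law for the eigenvalue counting function $N(\lambda):=\#\{k:\tilde\lambda_k\le\lambda\}$ and then invert it: if $N(\lambda)\sim c\,\lambda^{1/C_2}$ as $\lambda\to\infty$, then $\tilde\lambda_k\sim (k/c)^{C_2}=C_1 k^{C_2}$. Thus it suffices to identify the exponent $1/C_2$ in the growth of $N(\lambda)$ in each case, together with a finite positive constant $c$ depending on the data.

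\emph{The non-degenerate case $\tilde A=-\Delta$ on $\Omega\subset\R^d$.} Here I would invoke the classical Weyl law $N(\lambda)\sim (2\pi)^{-d}\omega_d\,|\Omega|\,\lambda^{d/2}$, whose most self-contained derivation is the Dirichlet--Neumann bracketing argument of Courant. One overlays a grid of cubes of side $h$, separates the cubes contained in $\Omega$ from those meeting $\partial\Omega$, and uses the monotonicity of Dirichlet (resp. Neumann) eigenvalues under domain inclusion (resp. relaxation of the boundary condition) to sandwich $N(\lambda)$ between the counting functions of disjoint unions of cubes. On a single cube the eigenvalues are explicit sums of squares, so a Gauss lattice-point count gives the leading term for each piece; letting $h\to0$ the boundary layer is negligible and the interior contribution converges to $(2\pi)^{-d}\omega_d|\Omega|\lambda^{d/2}$. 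Inverting gives $\lambda_k\sim C_1 k^{2/d}$, i.e. $C_2=2/d$.

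\emph{The degenerate case $\tilde A=-(x^\alpha\,\cdot')'$ on $I=(0,1)$, $\alpha\in(0,2)$.} This is a singular Sturm--Liouville problem with $p(x)=x^\alpha$, density $\rho\equiv1$, degenerate at $x=0$. The key observation is that $-(x^\alpha v')'=\lambda v$ reduces \emph{explicitly} to Bessel's equation: with the substitution $v(x)=x^{(1-\alpha)/2}\,Z_\nu\!\big(\tfrac{2\sqrt\lambda}{2-\alpha}\,x^{(2-\alpha)/2}\big)$ of order $\nu=\tfrac{1-\alpha}{2-\alpha}$, the equation is solved by a Bessel function $Z_\nu$, and the Dirichlet condition $v(1)=0$ forces $\tfrac{2\sqrt{\lambda_k}}{2-\alpha}$ to be a positive zero $j_{\nu,k}$ of the selected solution. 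Since the spectrum of a one-dimensional Sturm--Liouville operator is simple, the $k$-th eigenvalue corresponds to the $k$-th zero, and the classical asymptotic $j_{\nu,k}\sim k\pi$ yields $\sqrt{\lambda_k}\sim\tfrac{(2-\alpha)\pi}{2}\,k$, hence $\lambda_k\sim\big(\tfrac{(2-\alpha)\pi}{2}\big)^2k^2$, so $C_2=2$. The same exponent follows from the Liouville--Green (WKB) asymptotic $\sqrt{\lambda_k}\,\big(\int_0^1\!\sqrt{\rho/p}\,dx\big)\sim k\pi$, using that $\int_0^1 x^{-\alpha/2}\,dx=\tfrac{2}{2-\alpha}<\infty$ precisely because $\alpha<2$.

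The main obstacle is the endpoint analysis in the degenerate case: the degeneracy at $x=0$ is a regular singular point, so one must check that the correct self-adjoint realization is being counted and that the singularity does not perturb the leading $k^2$ growth. The point is that the boundary condition $BC_\alpha(v)=0$ encodes exactly the admissible Bessel solution near $x=0$ according to the degeneracy regime: for $\alpha\in[0,1)$ one has $\nu>0$ and the condition $v(0)=0$ selects the regular solution $J_\nu$ (the prefactor forces $v\sim x^{1-\alpha}\to0$), while for $\alpha\in[1,2)$ the flux condition $(x^\alpha v_x)(0)=0$ selects the corresponding admissible branch. In every case the singular term at $x=0$ only shifts the phase constant in $j_{\nu,k}\sim(k+\tfrac{\nu}{2}-\tfrac14)\pi$ and contributes to lower-order corrections, leaving the leading asymptotic $\lambda_k\sim C_1 k^2$ intact. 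One therefore either quotes the Bessel-zero asymptotics directly or cites the spectral analysis of \cite{buffe2018spectral}, where this computation is carried out in the required generality.
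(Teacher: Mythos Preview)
The paper does not prove this lemma at all: it simply states the result and attributes it to \cite{buffe2018spectral}. Your proposal therefore goes well beyond the paper by supplying an actual argument. The two-part strategy you outline is correct and standard: the Laplacian case is the classical Weyl law (and your Dirichlet--Neumann bracketing sketch is the textbook proof), while in the degenerate case the reduction to Bessel's equation and the McMahon asymptotic $j_{\nu,k}\sim k\pi$ is exactly how the $k^2$ growth is obtained in the literature, including in the reference the paper cites. The Liouville--Green heuristic you add, based on the finiteness of $\int_0^1 x^{-\alpha/2}\,dx$ for $\alpha<2$, is a helpful sanity check for the exponent. One small caution: the sign of $\nu=\tfrac{1-\alpha}{2-\alpha}$ changes at $\alpha=1$, so in the range $\alpha\in(1,2)$ the relevant Bessel order is negative (or one works with $|\nu|$ and a different branch); this does not affect the leading asymptotic since $j_{\nu,k}\sim k\pi$ holds for any fixed real order, but it is worth stating cleanly when you match the boundary condition $BC_\alpha$ to the admissible solution.
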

The other known lemmas are related to estimates for the partial sum of eigenfunctions of $A$ and $\bar{A}$, respectively; see \cite{lebeau1995controle,buffe2018spectral}.
\begin{lemma}
\label{lemma-A1}
If $G_0$ is a nonempty open subset in $I$, then there exists a positive constant $C_1=C_1(I,G_0)$, such that for any positive integer $k$ and any
numbers $a_i\in\R \ (i = 1, 2,\cdots, k)$, it holds that
\begin{equation}
\label{lemma-A1-eq}
    \sum_{i=1}^k|a_i|^2\leq C_1e^{C_1\sqrt{\lambda_k}}\int_{G_0}\bigg|\sum_{i=1}^ka_ie_i(x)\bigg|^2dx.
\end{equation}

\end{lemma}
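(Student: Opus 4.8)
I would first reduce the statement to a quantitative unique continuation estimate. Since $\{e_i\}$ is orthonormal in $L^2(I)$, setting $g:=\sum_{i=1}^k a_i e_i$ gives $\sum_{i=1}^k|a_i|^2=\|g\|^2$, so \eqref{lemma-A1-eq} is equivalent to
\begin{equation*}
\|g\|^2\le C_1 e^{C_1\sqrt{\lambda_k}}\int_{G_0}|g(x)|^2\,dx .
\end{equation*}
For the Dirichlet Laplacian on $I=(0,1)$ one has $e_i(x)=\sqrt2\,\sin(i\pi x)$ and $\lambda_i=i^2\pi^2$, hence $\sqrt{\lambda_k}=k\pi$ and $g$ is a real trigonometric polynomial of degree $k$; equivalently it is the restriction to $I$ of an entire function of exponential type $\sqrt{\lambda_k}$. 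The exponent $\sqrt{\lambda_k}$ on the right-hand side is therefore exactly the exponential type of $g$, which is the heuristic driving the whole estimate.

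The plan is to follow the Lebeau--Robbiano elliptic-extension strategy. I would introduce the harmonic extension
\begin{equation*}
F(x,s):=\sum_{i=1}^k a_i\,e_i(x)\cosh\!\big(\sqrt{\lambda_i}\,s\big),\qquad (x,s)\in I\times\R,
\end{equation*}
which satisfies $\partial_{xx}F+\partial_{ss}F=0$ in $I\times\R$, vanishes on $\{x=0\}\cup\{x=1\}$, and restores $g$ via $F(\cdot,0)=g$. Because every frequency obeys $\sqrt{\lambda_i}\le\sqrt{\lambda_k}$, the growth of the $\cosh$ factors is controlled by $e^{\sqrt{\lambda_k}|s|}$; using orthonormality in $x$ and integrating in $s$ over $(-1,1)$ yields the two-sided bound $\|g\|\le \|F\|_{L^2(I\times(-1,1))}\le C\,e^{C\sqrt{\lambda_k}}\|g\|$.

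Next I would invoke a quantitative unique continuation (interpolation) inequality for harmonic functions: there exist $\theta\in(0,1)$ and $C>0$, depending only on $I$ and $G_0$, such that
\begin{equation*}
\|F\|_{L^2(I\times(-\tfrac12,\tfrac12))}\le C\,\|F\|_{L^2(I\times(-1,1))}^{1-\theta}\,\|F\|_{L^2(G_0\times(-1,1))}^{\theta}.
\end{equation*}
Such an estimate follows either from a Carleman estimate for the Laplacian in $(x,s)$, or from a chain of three-ball inequalities propagating information from $G_0\times(-1,1)$ across $I\times(-\tfrac12,\tfrac12)$. Combining it with the growth bound of the previous step, estimating $\|F\|_{L^2(G_0\times(-1,1))}$ by $e^{C\sqrt{\lambda_k}}\|g\|_{L^2(G_0)}$ and controlling $\|g\|$ by $\|F\|_{L^2(I\times(-\frac12,\frac12))}$ through a trace/Caccioppoli argument, a Young-inequality manipulation of the interpolation exponents absorbs the $\|g\|^{1-\theta}$ factor and isolates $\|g\|^2\le C_1 e^{C_1\sqrt{\lambda_k}}\int_{G_0}|g|^2$.

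The main obstacle is the bookkeeping in this final step: one must verify that the constants produced by the interpolation inequality depend only on $(I,G_0)$ and that the only $k$-dependence enters through the growth factor $e^{C\sqrt{\lambda_k}}$, so that the resulting $C_1$ is $k$-free. In the present one-dimensional setting this difficulty can be bypassed entirely: since $g$ is an exponential sum with the $2k$ real frequencies $\{\pm\pi,\dots,\pm k\pi\}$, the Tur\'an--Nazarov inequality for exponential sums gives directly $\|g\|_{L^2(I)}^2\le (C/|G_0|)^{Ck}\,\|g\|_{L^2(G_0)}^2$, and since $k\le \sqrt{\lambda_k}/\pi$ with $|G_0|$ fixed this is exactly the claimed bound. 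There the only delicate point is citing the sharp form of the inequality whose base $C/|G_0|$ is independent of the degree $k$.
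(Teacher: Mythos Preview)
The paper does not prove this lemma itself; it is quoted as a known spectral inequality with a reference to Lebeau--Robbiano. Your harmonic-extension outline is exactly that strategy, but one step does not go through as written: the bound $\|F\|_{L^2(G_0\times(-1,1))}\le e^{C\sqrt{\lambda_k}}\|g\|_{L^2(G_0)}$ is not a growth estimate and cannot be obtained directly, because for $s\neq 0$ the slice $F(\cdot,s)=\sum_i a_i\cosh(\sqrt{\lambda_i}s)\,e_i$ has coefficients different from those of $g$, and the $L^2(G_0)$-norm of a spectral sum does not respect termwise bounds. In fact controlling $\|F(\cdot,s)\|_{L^2(G_0)}$ by $\|g\|_{L^2(G_0)}$ uniformly in $s$ is essentially equivalent to the inequality you are trying to prove. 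The standard fix is to take instead the odd extension $F(x,s)=\sum_i a_i e_i(x)\sinh(\sqrt{\lambda_i}s)/\sqrt{\lambda_i}$, so that $F|_{s=0}=0$ and $\partial_sF|_{s=0}=g$, and to use an interpolation inequality whose observation term is the \emph{trace} $\partial_sF(\cdot,0)|_{G_0}=g|_{G_0}$ on the slice $\{s=0\}$ rather than a bulk norm over $G_0\times(-1,1)$; with that modification the rest of your argument closes.

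Your Tur\'an--Nazarov alternative is correct and is genuinely the shortest route in this one-dimensional setting: since $g=\sum_{i\le k}a_i\sqrt{2}\sin(i\pi x)$ is an exponential sum with $2k$ frequencies, Nazarov's inequality gives $\|g\|_{L^2(I)}^2\le (C/|G_0|)^{Ck}\|g\|_{L^2(G_0)}^2$ with a $k$-independent base, and $k=\sqrt{\lambda_k}/\pi$ turns this into the claimed $C_1e^{C_1\sqrt{\lambda_k}}$ bound with $C_1=C_1(I,G_0)$. This is different from (and simpler than) the elliptic-extension proof the paper cites; what the Lebeau--Robbiano route buys is that it generalises to higher dimensions and to non-explicit eigenbases, which is precisely what is needed for the degenerate operator $\bar A$ in the companion Lemma~\ref{lemma-A2}.
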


\begin{lemma}
\label{lemma-A2}
If $G_0$ is a nonempty open subset in $I$, then there exists a positive constant $C_2=C_2(I,G_0,\alpha,\sigma)$, such that for any positive integer $k$ and any
numbers $\bar{a}_i\in\R\ (i = 1, 2,\cdots, k)$, it holds that
\begin{equation}
\label{lemma-A2-eq}
\sum_{i=1}^k|\bar{a}_i|^2\leq C_2e^{C_2\bar{\lambda}_k^\sigma}\int_{G_0}\bigg|\sum_{i=1}^k\bar{a}_i\bar{e}_i(x)\bigg|^2dx,
\end{equation}
where
\begin{equation}
\label{sigma}
 \sigma=
\begin{cases}
\dfrac{3}{4}, &~\text{if}~ \alpha\in(0,2)\setminus\{1\},\\[3mm]
\dfrac{3}{2\gamma}  ~\text{for any}~\gamma\in(0,2),&~\text{if}~ \alpha=1.
\end{cases}
\end{equation}

\end{lemma}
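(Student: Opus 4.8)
The plan is to establish \eqref{lemma-A2-eq} as a Lebeau--Robbiano spectral inequality for the degenerate operator $\bar A$, in parallel with the non-degenerate estimate of Lemma \ref{lemma-A1}, the new feature being the degeneracy of $x^\alpha$ at $x=0$. Write $u:=\sum_{i=1}^k\bar a_i\bar e_i\in\bar X_k$; the goal is to bound $\|u\|^2=\sum_{i=1}^k|\bar a_i|^2$ by $\int_{G_0}|u|^2\,dx$ with the stated frequency-dependent constant.

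First I would lift $u$ to a solution of an augmented elliptic equation. Put
\[
U(x,s):=\sum_{i=1}^k\bar a_i\,\frac{\sinh(\sqrt{\bar\lambda_i}\,s)}{\sqrt{\bar\lambda_i}}\,\bar e_i(x),\qquad (x,s)\in I\times(0,S).
\]
Since $\bar A\bar e_i=-\bar\lambda_i\bar e_i$, one checks directly that $U$ solves the degenerate elliptic equation $\partial_s^2U+(x^\alpha U_x)_x=0$ on $I\times(0,S)$, subject to the $x$-boundary conditions $BC_\alpha$, and satisfies $U(\cdot,0)=0$, $\partial_s U(\cdot,0)=u$. Moreover, because $U$ only involves modes with $\bar\lambda_i\le\bar\lambda_k$, one has the frequency bound $\|U\|_{L^2(I\times(0,S))}\le Ce^{\sqrt{\bar\lambda_k}\,S}\|u\|$. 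Thus estimating $\|u\|=\|\partial_s U(\cdot,0)\|$ from $G_0$ is reduced to a quantitative unique continuation (interpolation) estimate for $U$ from the region $G_0\times\{s=0\}$.

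The key step is that interpolation inequality for the lifted operator $\partial_s^2+(x^\alpha\partial_x)_x$. The natural route, following \cite{lebeau1995controle,buffe2018spectral}, is to diagonalize the degeneracy by the change of variables $\xi=\phi(x):=\tfrac{2}{2-\alpha}x^{(2-\alpha)/2}$ together with the rescaling $u(x)=x^{(1-\alpha)/2}v(\xi)$, under which $\bar A$ becomes a Bessel operator and each $\bar e_i$ is a constant multiple of $x^{(1-\alpha)/2}J_\nu(\sqrt{\bar\lambda_i}\,\phi(x))$ with index $\nu=\tfrac{|1-\alpha|}{2-\alpha}$; the asymptotics $\bar\lambda_k\sim Ck^2$ of Lemma \ref{lemma-lambda} then reflect the spacing of the zeros of $J_\nu$. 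Using a Carleman estimate with a weight adapted to the singular coefficient $x^\alpha$, one propagates smallness from a ball inside $G_0\times(0,S)$ to the whole section $\{s=0\}$; combining the resulting interpolation inequality with the frequency bound above and optimizing in $S$ yields a constant of the form $e^{C\bar\lambda_k^\sigma}$.

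The main obstacle, and what fixes the value of $\sigma$, is the analysis near the degeneracy $x=0$: the Carleman weight must be compatible with the vanishing of $x^\alpha$, and the slower propagation of smallness across the degenerate point degrades the exponent from the classical $\tfrac12$ to $\sigma=\tfrac34$. The borderline case $\alpha=1$ is genuinely worse, since it corresponds to Bessel index $\nu=0$, whose solutions have only logarithmic behavior at the origin; this forces a limiting argument that produces the weaker, parameter-dependent exponent $\sigma=\tfrac{3}{2\gamma}$, $\gamma\in(0,2)$, with the constant blowing up as $\gamma\to0$. I would carry out this unique-continuation step by invoking the degenerate spectral inequalities of \cite{lebeau1995controle,buffe2018spectral} rather than reproving them.
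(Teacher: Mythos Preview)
The paper does not prove this lemma: it is stated as a known result and attributed directly to \cite{lebeau1995controle,buffe2018spectral} (the degenerate case being the content of \cite{buffe2018spectral}). Your proposal is consistent with this---you outline the Lebeau--Robbiano lifting and the Bessel change of variables that underlie the cited result, and then defer the hard unique-continuation/Carleman step to the same references---so in substance you and the paper agree: the inequality is quoted, not proved, from \cite{buffe2018spectral}.
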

Then, (\ref{lemma-A1-eq}) and (\ref{lemma-A2-eq}) can be replaced by the following expressions, respectively.
\begin{equation}
  \label{eq:spectral-1}
\|\cE_\lambda \xi\|^2
\leq C_1e^{C_1\sqrt{\lambda_k}}\|\cE_\lambda \xi \|^2_{L^2(G_0)},\  \forall\, \xi\in
L^2(I), \ \lambda>0,
\end{equation}
and
\begin{equation}
  \label{eq:spectral-2}
\|\cE_{\bar{\lambda}} \xi\|^2
\leq C_2e^{C_2\bar{\lambda}_k^\sigma}\|\cE_{\bar{\lambda}} \xi \|^2_{L^2(G_0)},\  \forall\, \xi\in
L^2(I), \ \bar\lambda>0.
\end{equation}

Let us denote by $(p(\cdot),w(\cdot))$ the solution of coupled system
\eqref{eq:adjoint} given the terminal condition $(p_T,w_T)=(p(T),w(T))$.
Denote
$$
\cE_{\bar{\lambda}} p(t)=\sum_{\bar\lambda_i\leq \bar\lambda} p_i(t)\bar{e}_i,\quad\ \,\,\, \cE_\lambda w(t)=\sum_{\lambda_i\leq\lambda}w_i(t)e_i,
$$
and
\begin{equation}
\label{eq66}
\begin{array}{lll}
\cE^\perp_{\bar{\lambda}}p(t)=\sum_{\bar\lambda_i>\bar\lambda}p_i(t)\bar{e}_i,\,\,\,\,\cE^\perp_\lambda w(t)=\sum_{\lambda_i>\lambda} w_i(t)e_i,
\end{array}
\end{equation}
where $(p_i,w_i)\ (i\in\mathbb{N}^+)$ satisfies the following coupled system:
\begin{equation}
\label{model-i}
\begin{cases}
(p_i)_t = \bar{\lambda}_i p_i - dp_i- bw_i, & \left( x,t\right) \in I\times(0,T),   \\[2mm]
(w_i)_t = \lambda_i w_i -cp_i-aw_i, & \left( x,t\right) \in I\times(0,T),   \\[2mm]
p_i\left(x, T\right) =p_{T,i},w_i\left(x, T\right) =w_{T,i}, &  x\in I\,,\\[2mm]
(0<\alpha<1)\
\begin{cases}
p_i(1,t)=w_i(1,t) = 0, & t\in \left(0,T\right), \\[2mm]
p_i(0,t)=w_i(0,t) = 0, & t\in \left(0,T\right),
\end{cases}\\[2mm]
(1\leq \alpha<2)\
\begin{cases}
p_i(1,t)=w_i(1,t) = 0, & t\in \left(0,T\right), \\[2mm]
(x^\alpha (p_i)_x)(0,t)=w_i(0,t) = 0, & t\in \left(0,T\right).
\end{cases}
\end{cases}
\end{equation}
For simplifying the notations, write
$$
L^\infty:=L^\infty(0,T;\R).
$$
Set $$\tau = 2\max\{\|a\|_{L^\infty},\|d\|_{L^\infty}\}+\|b\|_{L^\infty}+\|c\|_{L^\infty}+1.$$

\section{Observability estimate and  Null controllability for measurable sets in time}

\subsection{ Some Observability results}

In this subsection, we denote $\bar \lambda=\bar\lambda_k$ and $\lambda=\lambda_k$ with some $k\in\mathbb{N}^+$ for simplifying the notations. 

\begin{lemma}
  For any $\alpha\in(0,2)$ and given any $(p_T,w_T)$ in the space of  $\big(L^2(I)\big)^2$,
    we have for each $t\in [0,T]$,
  \begin{equation}
    \label{eq:decay}
     \|\cE^\perp_{\bar{\lambda}}p(t)\|^2+\|\cE^\perp_{\lambda}w(t)\|^2 \le e^{\left(-2\min\{\bar{\lambda}_k,\lambda_k\}+\tau\right)(T-t)}  (\|p_T\|^2+\|w_T\|^2).
  \end{equation}
\end{lemma}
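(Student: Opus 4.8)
The plan is to run a spectral energy estimate on the high-frequency component and close it with Grönwall's inequality. Set $P(t):=\cE^\perp_{\bar{\lambda}}p(t)$, $W(t):=\cE^\perp_\lambda w(t)$ and $\Phi(t):=\|P(t)\|^2+\|W(t)\|^2$, so that the left-hand side of \eqref{eq:decay} is exactly $\Phi(t)$. First I would apply the projections $\cE^\perp_{\bar{\lambda}}$ and $\cE^\perp_\lambda$ to the two equations of \eqref{eq:adjoint}. Because $\bar A$ (respectively $A$) commutes with its own spectral projection, and because the coefficients $a,b,c,d$ depend only on $t$ and hence commute with every spatial projection, this yields $P_t=-\bar A P-dP-b\,\cE^\perp_{\bar{\lambda}}w$ and $W_t=-AW-aW-c\,\cE^\perp_\lambda p$, with terminal data $P(T)=\cE^\perp_{\bar{\lambda}}p_T$ and $W(T)=\cE^\perp_\lambda w_T$.

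Next I would differentiate $\Phi$, using the self-adjointness of the projections to reduce $\frac12\Phi'$ to $\langle-\bar A P,P\rangle+\langle-AW,W\rangle$ minus zeroth-order and coupling contributions. The crucial gain comes from the spectral gap: since $P$ involves only eigenfunctions $\bar e_i$ with $\bar\lambda_i>\bar\lambda_k$ and $W$ only $e_i$ with $\lambda_i>\lambda_k$, one has $\langle-\bar A P,P\rangle\ge\bar\lambda_k\|P\|^2$ and $\langle-AW,W\rangle\ge\lambda_k\|W\|^2$, each at least $\min\{\bar\lambda_k,\lambda_k\}$ times the respective norm. The terms $-d\|P\|^2$ and $-a\|W\|^2$ cost at most $\|d\|_{L^\infty}\|P\|^2$ and $\|a\|_{L^\infty}\|W\|^2$, while the coupling terms are estimated by Cauchy--Schwarz and Young's inequality against $\|b\|_{L^\infty}$ and $\|c\|_{L^\infty}$. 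Recalling that $\tau=2\max\{\|a\|_{L^\infty},\|d\|_{L^\infty}\}+\|b\|_{L^\infty}+\|c\|_{L^\infty}+1$ is designed precisely to absorb all zeroth-order and coupling contributions (with the factor $2$ matching the $\tfrac12\Phi'$ normalization and the $+1$ providing slack), I expect the differential inequality $\Phi'(t)\ge\bigl(2\min\{\bar\lambda_k,\lambda_k\}-\tau\bigr)\Phi(t)$.

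Finally I would integrate this inequality backward in time. Writing $\mu:=2\min\{\bar\lambda_k,\lambda_k\}-\tau$, the function $e^{-\mu t}\Phi(t)$ is nondecreasing, so for $t\le T$ one gets $\Phi(t)\le e^{-\mu(T-t)}\Phi(T)=e^{(-2\min\{\bar\lambda_k,\lambda_k\}+\tau)(T-t)}\Phi(T)$. Since orthogonal projections are norm-nonincreasing, $\Phi(T)=\|\cE^\perp_{\bar{\lambda}}p_T\|^2+\|\cE^\perp_\lambda w_T\|^2\le\|p_T\|^2+\|w_T\|^2$, which gives \eqref{eq:decay}.

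I expect the main obstacle to be the coupling terms. Applying $\cE^\perp_{\bar{\lambda}}$ to $-bw$ produces $-b\,\cE^\perp_{\bar{\lambda}}w$, in which $w$ is projected by the spectral projection of $\bar A$ rather than that of $A$, so $\cE^\perp_{\bar{\lambda}}w$ is \emph{not} equal to $W$; symmetrically, $\cE^\perp_\lambda p$ is not $P$. The delicate point is therefore to control $\langle\cE^\perp_{\bar{\lambda}}w,P\rangle$ and $\langle\cE^\perp_\lambda p,W\rangle$ uniformly, so that they are genuinely absorbed by the $\|b\|_{L^\infty}+\|c\|_{L^\infty}$ budget inside $\tau$. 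Reconciling this mismatch between the two distinct spectral decompositions (those of $A$ and of $\bar A$) in the cross terms is the heart of the estimate.
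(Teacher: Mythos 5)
Your overall strategy (spectral gap on the high-frequency projections, absorption of the zeroth-order and coupling terms into $\tau$, then Gr\"onwall with an exponential weight) is exactly the paper's strategy, and you have derived the projected equations correctly: applying $\cE^\perp_{\bar{\lambda}}$ to the $p$-equation produces the coupling $-b\,\cE^\perp_{\bar{\lambda}}w$, not $-b\,\cE^\perp_{\lambda}w$. But the issue you flag in your final paragraph is not a technicality to be ``reconciled''; it is a gap your argument cannot close as written. To absorb $|\langle b\,\cE^\perp_{\bar{\lambda}}w,P\rangle|$ into $\tau\Phi$ by Young's inequality you would need $\|\cE^\perp_{\bar{\lambda}}w\|\le \|W\|$ (or at least $\le C\Phi^{1/2}$), but $\cE^\perp_{\bar{\lambda}}$ and $\cE^\perp_{\lambda}$ project onto spans of two \emph{different} orthonormal families ($\{\bar e_i\}$ versus $\{e_i\}$), so the only available bound is $\|\cE^\perp_{\bar{\lambda}}w\|\le\|w\|$, and $\|w\|$ is not controlled by $\Phi^{1/2}$. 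Consequently the differential inequality $\Phi'\ge\bigl(2\min\{\bar\lambda_k,\lambda_k\}-\tau\bigr)\Phi$ does not follow; in fact it fails. Take $a=c=d=0$, $b$ a nonzero constant, $p_T=0$, $w_T=e_1$. Then $\Phi(T)=0$, so your Gr\"onwall step would force $\Phi\equiv 0$ on $[0,T]$. But solving backward, $w(t)=e^{-\lambda_1(T-t)}e_1$, and $p$ is generated by the source $-bw$, whose expansion in the $\bar e_j$-basis generically has nonzero components for $j>k$; hence $\cE^\perp_{\bar{\lambda}}p(t)\neq 0$ for $t<T$. The low $A$-modes of $w$ feed the high $\bar A$-modes of $p$, and no constant $\tau$ independent of $k$ can repair this.

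For comparison, the paper does not resolve this point either --- it sidesteps it. The paper \emph{defines} the coefficients $(p_i,w_i)$ through system \eqref{model-i}, in which the coupling is taken to be diagonal across the two eigenbases; this tacitly identifies $\cE^\perp_{\bar{\lambda}}w$ with $\cE^\perp_{\lambda}w$ (equivalently, it pretends $\langle w,\bar e_i\rangle=\langle w,e_i\rangle$). Under that identification the energy computation goes through, but \eqref{model-i} is not a faithful modal decomposition of the adjoint system \eqref{eq:adjoint} unless $b\equiv 0$ or $c\equiv 0$, since $A\neq\bar A$ and the bases differ. So your instinct that the basis mismatch is ``the heart of the estimate'' is correct, and you have in fact located a genuine gap in the paper's own proof; but as a proof attempt your proposal is incomplete at precisely this step, and the example above indicates that the cross terms cannot be absorbed in the way both your argument and the paper's require.
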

\begin{proof}
 At first, we have (by the definition of $\tau$, we have $\tau>0$)
\begin{equation*}
    \begin{split}
        \big( e^{(2\bar{\lambda}_k-\tau)(T- t)}[\cE^\perp_{\bar{\lambda}}p(t)]^2 \big)_t
        &=-(2\bar{\lambda}_k-\tau)e^{(2\bar{\lambda}_k-\tau)(T- t)}[\cE^\perp_{\bar{\lambda}}p(t)]^2\\
&\hspace{3.5mm}+2e^{(2\bar{\lambda}_k-\tau)(T- t)} [\cE^\perp_{\bar{\lambda}}p(t)]\big\{-\bar{A}[\cE^\perp_{\bar{\lambda}}p(t)]-d[\cE^\perp_{\bar{\lambda}}p(t)]-b[\cE^\perp_\lambda w(t)]\big\}, 
    \end{split}
\end{equation*}
and
\begin{equation*}
    \begin{split}
        \big( e^{(2\lambda_k-\tau)(T- t)}[\cE^\perp_{\lambda}w(t)]^2 \big)_t
&=-(2\lambda_k-\tau)e^{(2\lambda_k-\tau)(T- t)}[\cE^\perp_\lambda w(t)]^2\\
&\hspace{3.5mm}+2e^{(2\lambda_k-\tau)(T- t)} [\cE^\perp_{\lambda}w(t)]\big\{-A[\cE^\perp_{\lambda}w(t)]
-c[\cE^\perp_{\bar{\lambda}}p(t)]-a[\cE^\perp_\lambda w(t)]\big\}. 
    \end{split}
\end{equation*}
Then for above equalities, integrating from $(0,1)\times(t,T)$, respectively, it follows from equality (\ref{eq66}) that
\begin{equation*}
    \begin{split}
        &\|\cE^\perp_{\bar{\lambda}} p_T\|^2-e^{(2\bar{\lambda}_k-\tau)(T- t)}\|\cE^\perp_{\bar{\lambda}}p(t)\|^2\\
&=\int_t^Te^{(2\bar{\lambda}_k-\tau)(T- s)}\big\{ 2\langle \cE^\perp_{\bar{\lambda}}p(s),-\bar{A}[\cE^\perp_{\bar{\lambda}}p(s)]\rangle-2d\langle \cE^\perp_{\bar{\lambda}}p(s),\cE^\perp_{\bar{\lambda}}p(s)\rangle
-2b\langle \cE^\perp_{\bar{\lambda}}p(s),\cE^\perp_{\lambda}w(s)\rangle\big\}ds\\
&\hspace{3.5mm}-\int_t^Te^{(2\bar{\lambda}_k-\tau)(T- s)}(2\bar{\lambda}_k-\tau)\|\cE^\perp_{\bar{\lambda}}p(s)\|^2 ds\\
&=\int_t^Te^{(2\bar{\lambda}_k-\tau)(T- s)}\bigg[2\sum_{i=k+1}^\infty \bar{\lambda}_i
  |p_i(s)|^2-2\bar{\lambda}_k\|\cE^\perp_{\bar{\lambda}}p(s)\|^2-2d\langle \cE^\perp_{\bar{\lambda}}p(s),\cE^\perp_{\bar{\lambda}}p(s)\rangle\\
  &
\hspace{3cm}-2b\langle \cE^\perp_{\bar{\lambda}}p(s),\cE^\perp_{\lambda}w(s)\rangle+\tau\|\cE^\perp_{\bar{\lambda}}p(s)\|^2\bigg]ds\\
&\geq\int_t^Te^{(2\bar{\lambda}_k-\tau)(T- s)}\bigg[-2\|d\|_{L^\infty}\| \cE^\perp_{\bar{\lambda}}p(s)\|^2
-\|b\|_{L^\infty}\|\cE^\perp_{\bar{\lambda}}p(s)\|^2\\
&\hspace{3cm}-\|b\|_{L^\infty}\| \cE^\perp_{\lambda}w(s)\|^2+\tau\|\cE^\perp_{\bar{\lambda}}p(s)\|^2\bigg]ds,
    \end{split}
\end{equation*}
and
\begin{equation*}
\begin{split} 
&\|\cE_\lambda^\bot w_T\|^2-e^{(2\lambda_k-\tau)(T- t)}\|\cE_\lambda^\bot w(t)\|^2\\
&=\int_t^Te^{(2\lambda_k-\tau)(T- s)}\big\{ 2\langle \cE_\lambda^\bot w( s),-A[\cE_\lambda^\bot w(s)]\rangle-2a\langle \cE_\lambda^\bot w(s),\cE_\lambda^\bot w(s)\rangle
-2c\langle \cE_\lambda^\bot w(s),\cE^\perp_{\bar{\lambda}}p(s)\rangle\big\}ds\\
&\hspace{3.5mm}-\int_t^Te^{(2\lambda_k-\tau)(T- s)}(2\lambda_k-\tau)\|\cE_\lambda^\bot w(s)\|^2 ds\\
&=\int_t^Te^{(2\lambda_k-\tau)(T- s)}\bigg[2\sum_{i=k+1}^\infty\lambda_i
  |w_i(s)|^2-2\lambda_k\|\cE_\lambda^\bot w(s)\|^2-2a\langle \cE_\lambda^\bot w(s),\cE_\lambda^\bot w(s)\rangle\\
  &
\hspace{3cm}-2c\langle \cE^\perp_{\bar{\lambda}}p(s),\cE_\lambda^\bot w(s)\rangle+\tau\|\cE_\lambda^\bot w(s)\|^2\bigg]ds\\
&\geq\int_t^Te^{(2\lambda_k-\tau)(T- s)}\bigg[-2\|a\|_{L^\infty}\|\cE_\lambda^\bot w(s)\|^2
-\|c\|_{L^\infty}\|\cE^\perp_{\bar{\lambda}} p(s)\|^2\\
&\hspace{3cm}-\|c\|_{L^\infty}\|\cE_\lambda^\bot w(s)\|^2+\tau\|\cE_\lambda^\bot w(s)\|^2\bigg]ds.
\end{split}
\end{equation*}
From these, we see
\begin{equation*}
    \begin{split}
        &\|\cE^\perp_{\bar{\lambda}} p_T\|^2+\|\cE_\lambda^\bot w_T\|^2-e^{(2\bar{\lambda}_k-\tau)(T- t)}\|\cE^\perp_{\bar{\lambda}}p(t)\|^2-e^{(2\lambda_k-\tau)(T- t)}\|\cE_\lambda^\bot w(t)\|^2\\
&\geq\int_t^Te^{\left(2\min\{\lambda_k,\bar{\lambda}_k\}-\tau\right)(T- s)}\bigg\{ \big[ \tau-(2\|d\|_{L^\infty}-\|b\|_{L^\infty}-\|c\|_{L^\infty}) \big]\|\cE^\perp_{\bar{\lambda}}p(s)\|^2\\
&\hspace{4.2cm}+\big[ \tau-(2\|a\|_{L^\infty}-\|b\|_{L^\infty}-\|c\|_{L^\infty}) \big]\|\cE_\lambda^\bot w(s)\|^2  \bigg\}ds\\
&\geq\int_t^Te^{\left(2\min\{\lambda_k,\bar{\lambda}_k\}-\tau\right)(T- s)}\bigg\{ \big[ \tau-\big(2\max\{\|a\|_{L^\infty},\|d\|_{L^\infty}\}\\
&\hspace{5cm}-\|b\|_{L^\infty}-\|c\|_{L^\infty}\big) \big]\big(\|\cE^\perp_{\bar{\lambda}}p(s)\|^2+\|\cE_\lambda^\bot w(s)\|^2\big)\bigg\}ds\\
&\geq0, 
    \end{split}
\end{equation*}
which implies the inequality \eqref{eq:decay}.
\end{proof}

Next, we provide an interpolation inequality.
\begin{proposition}
  Let $\alpha\in(0,2)$, $\sigma$ be defined in \eqref{sigma}. Given any $(p_T,w_T)$ in the space of  $\big(L^2(I)\big)^2$, and $t\in [0,T)$, there
  exists
 a constant $K = K(T,I,G_1,G_2,\alpha,\sigma)$ such that
\begin{equation}
  \label{eq:interpolation} 
   \|p(t)\|^2+\|w(t)\|^2\le Ke^{K(T-t)^{\frac{\sigma}{\sigma-1}}} \big(\|p(t)\|^2_{L^2(G_2)}+\|w(t)\|^2_{L^2(G_1)}\big)^{\frac{1}{2}}\big( \|p_T\|^2+\|w_T\|^2\big)^{\frac{1}{2}}. 
\end{equation}
\end{proposition}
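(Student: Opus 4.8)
The plan is to establish \eqref{eq:interpolation} by the Lebeau--Robbiano-type frequency splitting, balancing the low-frequency spectral inequalities \eqref{eq:spectral-1}--\eqref{eq:spectral-2} against the high-frequency dissipation \eqref{eq:decay}. Throughout I write $s:=T-t$, $D:=\|p(t)\|^2_{L^2(G_2)}+\|w(t)\|^2_{L^2(G_1)}$ and $N:=\|p_T\|^2+\|w_T\|^2$, and fix a cutoff index $k\in\N^+$ so that $\bar\lambda=\bar\lambda_k$ and $\lambda=\lambda_k$. Since $\alpha\in(0,2)$ (choosing $\gamma\in(3/2,2)$ in \eqref{sigma} when $\alpha=1$) I may assume $\sigma\in(\tfrac12,1)$: the condition $\sigma<1$ makes the exponent $\tfrac{\sigma}{\sigma-1}$ negative and lets the high-frequency decay dominate the low-frequency growth, while $2\sigma>1$ lets the exponent $\sqrt{\lambda_k}$ for $A$ be absorbed by the larger exponent $\bar\lambda_k^\sigma$ for $\bar A$.

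First I would estimate the low-frequency parts. Using the orthogonal decompositions $p(t)=\cE_{\bar\lambda}p(t)+\cE^\perp_{\bar\lambda}p(t)$ and $w(t)=\cE_\lambda w(t)+\cE^\perp_\lambda w(t)$ in $L^2(I)$, it suffices to control $\|\cE_{\bar\lambda}p(t)\|^2$ and $\|\cE_\lambda w(t)\|^2$. Applying \eqref{eq:spectral-2} to $\xi=p(t)$ on $G_2$ and \eqref{eq:spectral-1} to $\xi=w(t)$ on $G_1$, together with the triangle inequalities $\|\cE_{\bar\lambda}p(t)\|_{L^2(G_2)}\le\|p(t)\|_{L^2(G_2)}+\|\cE^\perp_{\bar\lambda}p(t)\|$ and $\|\cE_\lambda w(t)\|_{L^2(G_1)}\le\|w(t)\|_{L^2(G_1)}+\|\cE^\perp_\lambda w(t)\|$, and bounding $\max\{C_1\sqrt{\lambda_k},C_2\bar\lambda_k^\sigma\}\le C\bar\lambda_k^\sigma$ via Lemma \ref{lemma-lambda} (so $\sqrt{\lambda_k}\sim k\lesssim k^{2\sigma}\sim\bar\lambda_k^\sigma$), I get
\begin{equation*}
\|\cE_{\bar\lambda}p(t)\|^2+\|\cE_\lambda w(t)\|^2\le Ce^{C\bar\lambda_k^\sigma}\big(D+\|\cE^\perp_{\bar\lambda}p(t)\|^2+\|\cE^\perp_\lambda w(t)\|^2\big).
\end{equation*}
Adding back the high-frequency remainders, inserting the dissipation estimate \eqref{eq:decay}, and absorbing $e^{\tau s}\le e^{\tau T}$ into the constant, this produces, for every cutoff $k$,
\begin{equation*}
\|p(t)\|^2+\|w(t)\|^2\le Ce^{C\bar\lambda_k^\sigma}D+Ce^{C\bar\lambda_k^\sigma-2m_k s}N,\qquad m_k:=\min\{\bar\lambda_k,\lambda_k\}.
\end{equation*}

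The heart of the argument is to optimize over the cutoff. Using $m_k\ge c_0\bar\lambda_k$ (again from Lemma \ref{lemma-lambda}, as $\lambda_k,\bar\lambda_k\sim k^2$) and treating the cutoff level $x=\bar\lambda_k$ as a real parameter, I would choose $x=x^*:=\tfrac{1}{2c_0 s}\ln(N/D)$, which equalizes the two terms at the common value $Ce^{Cx^\sigma}D$. Since $\sqrt{DN}=D\,e^{c_0 x^* s}$, the target \eqref{eq:interpolation} reduces to the one-variable estimate
\begin{equation*}
\max_{x>0}\big(Cx^\sigma-c_0 x s\big)=K_0\,s^{\frac{\sigma}{\sigma-1}},
\end{equation*}
whose maximizer $x=\big(\tfrac{c_0 s}{C\sigma}\big)^{1/(\sigma-1)}$ makes both competing contributions proportional to $s^{\sigma/(\sigma-1)}$ because $1+\tfrac{1}{\sigma-1}=\tfrac{\sigma}{\sigma-1}$. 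Applying this with $x=x^*$ bounds $C(x^*)^\sigma-c_0x^*s\le K_0 s^{\sigma/(\sigma-1)}$ and yields $\|p(t)\|^2+\|w(t)\|^2\le Ce^{K_0 s^{\sigma/(\sigma-1)}}\sqrt{DN}$, i.e.\ \eqref{eq:interpolation}.

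The step I expect to be the main obstacle is that the optimal level $x^*$ need not be an eigenvalue, so the balancing must actually be performed over the discrete spectrum $\{\bar\lambda_k\}$. I would split into two regimes. When $N/D$ is bounded — in particular when $x^*<\bar\lambda_1$, where the backward energy estimate $\|p(t)\|^2+\|w(t)\|^2\le e^{CT}N$ is available — the inequality follows at once from $N\le C\,D^{1/2}N^{1/2}$. Otherwise I select the largest $k$ with $\bar\lambda_k\le x^*$: the $D$-term is then controlled by monotonicity of $x\mapsto x^\sigma$, whereas in the $N$-term the loss is driven by the spectral gap $\bar\lambda_{k+1}-\bar\lambda_k$. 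Making this quantitative, so that the gap-induced correction is absorbed into $K$ and does not spoil the symmetric exponents $\tfrac12,\tfrac12$ in the product, is the delicate point, and it hinges on the two-sided asymptotics $\bar\lambda_k\sim C_1k^2$ of Lemma \ref{lemma-lambda} keeping the multiplicative spacing $\bar\lambda_{k+1}/\bar\lambda_k$ close to $1$. A secondary routine check is that the zeroth-order coupling through $a,b,c,d$ enters only via \eqref{eq:decay}, where it was already absorbed into $\tau$, so the low-frequency spectral step is untouched by the coupling.
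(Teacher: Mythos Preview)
Your approach matches the paper's: split at a spectral cutoff $k$, control the low-frequency part by the spectral inequalities \eqref{eq:spectral-1}--\eqref{eq:spectral-2} and the high-frequency part by the dissipation \eqref{eq:decay}, then optimize in $k$ to extract the exponent $(T-t)^{\sigma/(\sigma-1)}$. The only difference is in how the optimization is packaged: the paper first bounds the combined exponent $C(\bar\lambda_k^\sigma+\sqrt{\lambda_k})-\min\{\bar\lambda_k,\lambda_k\}(T-t)\le K_1(T-t)^{\sigma/(\sigma-1)}$ uniformly in $k$, rewrites what remains as $Ke^{K(T-t)^{\sigma/(\sigma-1)}}[\e^{-1}D+\e N]$ with $\e=e^{-\min\{\bar\lambda_k,\lambda_k\}(T-t)}$, extends to all $\e>0$ via the backward energy bound, and then minimizes in $\e$; you instead pick the cutoff to equalize the $D$- and $N$-terms and afterwards bound the common exponent by its maximum over $x>0$. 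Your concern about the discreteness of the spectrum is legitimate and is exactly the step the paper elides when it passes from $\e\in\{e^{-m_ks}:k\ge1\}$ to ``all $\e\in(0,1)$'' without justification; your sketch of the fix via the two-sided asymptotics $\bar\lambda_k\sim C_1k^2$ (so that the gap $\bar\lambda_{k+1}-\bar\lambda_k\sim k\lesssim\bar\lambda_k^{\sigma}$ is absorbed into the low-frequency exponent) is the right way to close it.
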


\begin{proof}
On one hand, it follows from the
  spectral estimate
  \eqref{eq:spectral-2} that
\begin{align*}
   \|\cE_{\bar{\lambda}} p(t)\|^2
\le C_2e^{C_2\bar{\lambda}_k^\sigma} \|\cE_{\bar{\lambda}} p(t)\|^2_{L^2(G_2)}\le C_2e^{C_2\bar{\lambda}_k^\sigma}\big( \|p(t)\|^2_{L^2(G_2)} +
\|\cE^\perp_{\bar{\lambda}} p(t)\|^2_{L^2(G_2)}\big)
\end{align*}
for some constant $C_2 = C_2(I,G_2,\alpha,\sigma)$. It shows that
\begin{align*}
\|p(t)\|^2
 &=  \|\cE_{\bar{\lambda}} p(t)\|^2+\|\cE^\perp_{\bar{\lambda}}
p(t)\|^2\\
& \le  C_2e^{C_2\bar{\lambda}_k^\sigma}\big( \|p(t)\|^2_{L^2(G_2)} +
\|\cE^\perp_{\bar{\lambda}} p(t)\|^2_{L^2(G_2)}\big)+ \|\cE^\perp_{\bar{\lambda}} p(t)\|^2\\
& \le 2C_2e^{C_2\bar{\lambda}_k^\sigma}\big( \| p(t)\|^2_{L^2(G_2)} + \|\cE^\perp_{\bar{\lambda}} p(t)\|^2\big).
\end{align*}
On the other hand, it follows from the
  spectral estimate
  \eqref{eq:spectral-1} that
\begin{align*}
   \|\cE_\lambda w(t)\|^2\le C_1e^{C_1\sqrt{\lambda_k}} \|\cE_\lambda w(t)\|^2_{L^2(G_1)}\le C_1e^{C_1\sqrt{\lambda_k}}\big( \|w(t)\|^2_{L^2(G_1)} +
\|\cE^\bot_\lambda w(t)\|^2_{L^2(G_1)}\big)
\end{align*}
for some constant $C_1 = C_1(I,G_1)$. It shows that
\begin{align*}
\|w(t)\|^2
 &=  \|\cE_\lambda w(t)\|^2+\|\cE^\bot_\lambda
w(t)\|^2\\
& \le  C_1e^{C_1\sqrt{\lambda_k}}\big( \|w(t)\|^2_{L^2(G_1)} +
\|\cE^\bot_\lambda w(t)\|^2_{L^2(G_1)}\big)+ \|\cE^\bot_\lambda w(t)\|^2\\
& \le 2C_1e^{C_1\sqrt{\lambda_k}}\big( \| w(t)\|^2_{L^2(G_1)} + \|\cE^\bot_\lambda w(t)\|^2\big).
\end{align*}
Therefore, for some constant $C = C(I,G_1,G_2,\alpha,\sigma)$, by the decay estimate
\eqref{eq:decay} we obtain that
\begin{equation*}
    \begin{split}
        &\|p(t)\|^2+\|w(t)\|^2\\
        &\le 2C_2e^{C_2\bar{\lambda}_k^\sigma}\big( \| p(t)\|^2_{L^2(G_2)} + \|\cE^\perp_{\bar{\lambda}} p(t)\|^2\big)+2C_1e^{C_1\sqrt{\lambda_k}}\big( \| w(t)\|^2_{L^2(G_1)} + \|\cE^\bot_\lambda w(t)\|^2\big)\\
&\le 2Ce^{C(\bar{\lambda}_k^\sigma+\sqrt{\lambda_k})}\big( \| p(t)\|^2_{L^2(G_2)}+\| w(t)\|^2_{L^2(G_1)} +
\|\cE^\perp_{\bar{\lambda}} p(t)\|^2+\|\cE^\bot_\lambda w(t)\|^2\big)\\
&\le 2Ce^{C(\bar{\lambda}_k^\sigma+\sqrt{\lambda_k})}\big[ \| p(t)\|^2_{L^2(G_2)}+\| w(t)\|^2_{L^2(G_1)} +e^{(-2\min\{\bar{\lambda}_k,\lambda_k\}+\tau)(T-t)}  (\|p_T\|^2+\|w_T\|^2)\big]\\
&\le  2Ce^{C(\bar{\lambda}_k^\sigma+\sqrt{\lambda_k})}e^{\tau T}\big[\| p(t)\|^2_{L^2(G_2)}+\| w(t)\|^2_{L^2(G_1)} +e^{-2\min\{\bar{\lambda}_k,\lambda_k\}(T-t)} (\|p_T\|^2+\|w_T\|^2)\big]\\
& =2Ce^{C(\bar{\lambda}_k^\sigma+\sqrt{\lambda_k})-\min\{\bar{\lambda}_k,\lambda_k\}(T-t)}e^{\tau T}\big[e^{\min\{\bar{\lambda}_k,\lambda_k\}(T-t)}(\| p(t)\|^2_{L^2(G_2)}+\| w(t)\|^2_{L^2(G_1)})\\
&\hspace{5.5cm}+
e^{-\min\{\bar{\lambda}_k,\lambda_k\}(T-t)} (\|p_T\|^2+\|w_T\|^2)\big].
    \end{split}
\end{equation*} 
By Lemma \ref{lemma-lambda} and (\ref{sigma}),
one can verify that for some constants $K_1=K_1(I,G_1,G_2,\alpha,\sigma)$,
\begin{equation*}
    \begin{split} 
 C(\bar{\lambda}_k^\sigma+\sqrt{\lambda_k})-\min\{\bar{\lambda}_k,\lambda_k\}(T-t)
 &\le C_3(k^{2\sigma}+k)-C_4k^2(T-t)\\
 &\le C_3k^{2\sigma}-C_4k^2(T-t)
 \le K_1(T-t)^{\frac{\sigma}{\sigma-1}}, 
    \end{split}
\end{equation*}
in the last inequality, we have taken maximum about the variable $k\in\mathbb{N}^+$. 
Hence, there exists a constant $K=K(T,I,G_1,G_2,\alpha,\sigma)$ such that
\begin{equation*}
    \begin{split}
         \|p(t)\|^2+\|w(t)\|^2
 &\le K e^{K(T-t)^{\frac{\sigma}{\sigma-1}}}\big[e^{\min\{\bar{\lambda}_k,\lambda_k\}(T-t)}(\| p(t)\|^2_{L^2(G_2)}+\| w(t)\|^2_{L^2(G_1)})\\
 &\hspace{2.5cm}+
e^{-\min\{\bar{\lambda}_k,\lambda_k\}(T-t)} (\|p_T\|^2+\|w_T\|^2)\big], 
    \end{split}
\end{equation*}
which is equivalent to:  for all $\e\in (0,1)$, we have 
\begin{equation}
  \label{eq:epsilon inq} 
  \|p(t)\|^2+\|w(t)\|^2\le K e^{K(T-t)^{\frac{\sigma}{\sigma-1}}}\big[\e^{-1}(\| p(t)\|^2_{L^2(G_2)}+\| w(t)\|^2_{L^2(G_1)}) +
\e (\|p_T\|^2+\|w_T\|^2)\big]. 
\end{equation}
Noting that $\| p(t)\|^2+\| w(t)\|^2 \le C_0 (\|p_T\|^2+\|w_T\|^2)$, where $C_0$ is a
constant depending on $T$, we see that the inequality
\eqref{eq:epsilon inq} holds for all $\e>0$. Finally, minimizing
\eqref{eq:epsilon inq} with respect to $\e$ leads to the desired
estimate \eqref{eq:interpolation}.
\end{proof}
\subsection{Observability Inequality And Null Controllability}
In this subsection, we are ready to prove Theorem \ref{thm:obs-inq1}.
\begin{proof}[Proof of Theorem \ref{thm:obs-inq1}]
 We borrow some ideas from \cite{yang2016observability} and \cite{apraiz2014observability}. Set $E$ is a measurable subset of $[0,T]$ with positive measure. Let $\ell\in(0,T)$ be any Lebesgue point of $E$. Then for each constant $q\in(0,1)$ which is to be fixed later, there exists a monotone increasing sequence
$\{\ell_n\}_{n\geq1}$ in $(0, \ell)$ such that
$$\lim_{n\rightarrow +\infty}\ell_n=\ell,$$
\begin{equation}\label{eq:equiv ratio}
\ell_{n+2}-\ell_{n+1}=q(\ell_{n+1}-\ell_n),\;\forall\, n\geq1
\end{equation}
and
$$|E\cap (\ell_n,\ell_{n+1})|\geq \frac{\ell_{n+1}-\ell_n}{3},\;\forall\, n\geq1.$$
Set
\begin{equation}
\label{eq-tau}
\tau_n=\ell_{n+1}-\frac{\ell_{n+1}-\ell_n}{6},\;\forall\, n\geq1.
\end{equation}
For each $t\in (\ell_n, \tau_n)$, by the interpolation inequality
\eqref{eq:interpolation} and replacing $ T$ by $\ell_{n+1}$, we have
\begin{equation*}
    \begin{split}
         \|p(t)\|^2+\|w(t)\|^2
   \le Ke^{K(l_{n+1}-t)^{\frac{\sigma}{\sigma-1}}} \big(\|p(t)\|^2_{L^2(G_2)}+\|w(t)\|^2_{L^2(G_1)}\big)^{\frac{1}{2}}\big( \|p(l_{n+1})\|^2+\|w(l_{n+1})\|^2\big)^{\frac{1}{2}}. 
    \end{split}
\end{equation*}
Since
\[
\ell_{n+1} -t \ge \ell_{n+1} -\tau_n = \frac{\ell_{n+1} -\ell_n}{6},
\]
and for some constant $C=C(T)$, $ \|p(\ell_n)\|^2+\|w(\ell_n)\|^2 \le C
(\|p(t)\|^2+\|w(t)\|^2)$, there exists a constant $C = C(T,I,G_1,G_2, \alpha,\sigma)$ such
that for all $n\ge 1$, and $t\in (\ell_n, \tau_n)$,
\begin{equation*}
    \begin{split}
        \|p(\ell_n)\|^2+\|w(\ell_n)\|^2
   \le Ce^{C(\ell_{n+1}-\ell_n)^{\frac{\sigma}{\sigma-1}}} \big(\|p(t)\|^2_{L^2(G_2)}+\|w(t)\|^2_{L^2(G_1)}\big)^{\frac{1}{2}}\big( \|p(\ell_{n+1})\|^2+\|w(\ell_{n+1})\|^2\big)^{\frac{1}{2}}. 
    \end{split}
\end{equation*}
Using the Cauchy inequality with $\epsilon$, the above inequality leads to
\begin{equation*}
    \begin{split}
        \|p(\ell_n)\|^2+\|w(\ell_n)\|^2 
&\le \e^{-1} C e^{C(\ell_{n+1}-\ell_n)^{\frac{\sigma}{\sigma-1}}} (\|p(t)\|^2_{L^2(G_2)}+\|w(t)\|^2_{L^2(G_1)})\\
&\hspace{3.5mm}+ \e (\|p(\ell_{n+1})\|^2+\|w(\ell_{n+1})\|^2), 
    \end{split}
\end{equation*}
which implies
\begin{equation*}
    \begin{split}
        \big(\|p(\ell_n)\|^2+\|w(\ell_n)\|^2\big)^{\frac{1}{2}} 
&\le \e^{-1} C e^{C(\ell_{n+1}-\ell_n)^{\frac{\sigma}{\sigma-1}}} (\|p(t)\|_{L^2(G_2)}+\|w(t)\|_{L^2(G_1)})\\
&\hspace{3.5mm}+ \e (\|p(\ell_{n+1})\|^2+\|w(\ell_{n+1})\|^2)^{\frac{1}{2}}. 
    \end{split}
\end{equation*}
Equivalently, we have
\begin{equation}
  \label{eq:Am-inq}
  A_n \le \e^{-1} C e^{C(\ell_{n+1}-\ell_n)^{\frac{\sigma}{\sigma-1}}} B(t) + \e A_{n+1},
\end{equation}
where
\begin{equation}
  \label{eq:Am-B}
  A_n =\big(\|p(\ell_n)\|^2+\|w(\ell_n)\|^2\big)^{\frac{1}{2}},\quad B(t)=\|p(t)\|_{L^2(G_2)}+\|w(t)\|_{L^2(G_1)}.
\end{equation}
Noting that $\{\ell_n\}_{n\geq1}$ is a monotone increasing sequence
in $(0,\ell)$, it follows that
\begin{align*}
  |E\cap(\ell_n,\tau_n)|
& = |E \cap (\ell_n,\ell_{n+1})| - |E\cap (\tau_n,\ell_{n+1})|\ge \frac{\ell_{n+1}-\ell_{n}}{3} - \frac{\ell_{n+1}-\ell_{n}}{6}= \frac{\ell_{n+1}-\ell_{n}}{6}>0.
\end{align*}
Then integrating the inequality \eqref{eq:Am-inq} over $E\cap (\ell_n,\tau_n)$, we have that for each $\e>0$,
$$
\int_{E\cap (\ell_n,\tau_n)}A_ndt\leq \e^{-1}C e^{C(\ell_{n+1}-\ell_n)^{\frac{\sigma}{\sigma-1}}}\int_{\ell_n}^{\tau_n}\chi_EB(t)dt+\e\int_{E\cap (\ell_n,\tau_n)}A_{n+1}dt,
$$
which  implies
$$
A_n\leq \e^{-1}C e^{C(\ell_{n+1}-\ell_n)^{\frac{\sigma}{\sigma-1}}}|E\cap(\ell_n,\tau_n)|^{-1}\int_{\ell_n}^{\tau_n}\chi_EB(t)dt+\e A_{n+1}.
$$
By (\ref{eq-tau}) and $|E\cap(\ell_n,\tau_n)|\geq \frac{1}{6}(\ell_{n+1}-\ell_n), \sigma\neq 1$, it follows that
\[
A_n \le \e A_{n+1}+ \e^{-1} C e^{C(\ell_{n+1}-\ell_n)^{\frac{\sigma}{\sigma-1}}} \int_{\ell_{n}}^{\ell_{n+1} } \chi_E B(t) dt,
\]
where $C=C(I,G_1,G_2,T,\alpha,\sigma,|E|)$. 
Multiplying the above inequality by $\e e^{-C(\ell_{n+1}-\ell_n)^{\frac{\sigma}{\sigma-1}}}$,
and replacing $\e$ by $\sqrt{\e}$ lead to
\[
\sqrt{\e}  e^{-C(\ell_{n+1}-\ell_n)^{\frac{\sigma}{\sigma-1}}} A_n \le \e  e^{-C(\ell_{n+1}-\ell_n)^{\frac{\sigma}{\sigma-1}}} A_{n+1} + C \int_{\ell_{n}}^{\ell_{n+1} } \chi_E B(t) dt.
\]
 Finally choosing $\e =
e^{-(\ell_{n+1}-\ell_n)^{\frac{\sigma}{\sigma-1}}}$ in the above inequality, we get
\begin{align*}
  & e^{-(C+\frac{1}{2})(\ell_{n+1}-\ell_n)^{\frac{\sigma}{\sigma-1}}} A_n -
  e^{-(C+1)(\ell_{n+1}-\ell_n)^{\frac{\sigma}{\sigma-1}}} A_{n+1}\le C \int_{\ell_{n}}^{\ell_{n+1} } \chi_E B(t) dt.
\end{align*}
Now, choosing $q = \bigg(\frac{C+\frac{1}{2}}{C+1}\bigg)^{\frac{1-\sigma}{\sigma}}$ in \eqref{eq:equiv ratio}, we
have
\begin{align*}
  & e^{-(C+\frac{1}{2})(\ell_{n+1}-\ell_n)^{\frac{\sigma}{\sigma-1}}} A_n -
  e^{-(C+\frac{1}{2})(\ell_{n+2}-\ell_{n+1})^{\frac{\sigma}{\sigma-1}}} A_{n+1} \le C \int_{\ell_{n}}^{\ell_{n+1} } \chi_E B(t) dt.
\end{align*}
Summing the above inequality from $n=1$ to $+\infty$, we have
\[
A_1 \le C e^{(C+\frac{1}{2})(\ell_2-\ell_1)^{\frac{\sigma}{\sigma-1}}}
\int_{\ell_1}^\ell \chi_E B(t) dt.
\]
Plugging the substitution \eqref{eq:Am-B} into the above inequality, we obtain
\begin{equation*}
    \begin{split}
         \|p(\ell_1)\|^2+\|w(\ell_1)\|^2
  &\le C e^{(C+\frac{1}{2})(\ell_2-\ell_1)^{\frac{\sigma}{\sigma-1}}}
\left(\int_{\ell_1}^\ell \chi_E (\|p(t)\|_{L^2(G_2)}+\|w(t)\|_{L^2(G_1)}) dt\right)^2\\
&\le C e^{(C+\frac{1}{2})(\ell_2-\ell_1)^{\frac{\sigma}{\sigma-1}}}\left[\|\chi_E\chi_{G_1}w\|^2_{L^1(0,T;L^2(I))}+\|\chi_E\chi_{G_2}p\|^2_{L^1(0,T;L^2(I))}
\right], 
    \end{split}
\end{equation*}
which implies the observability inequality \eqref{eq:obs-inq1},
completing the proof.
\end{proof}
Next, by the standard duality augment (i.e., HUM), we have the following equivalence between
the null controllability of (\ref{eq:main1}) and the observability inequality (\ref{eq:obs-inq1}) for the adjoint equation
(\ref{eq:adjoint}).

\begin{proposition}
  \label{pro-equivalence}
For any $T>0$, $\alpha\in(0,2)$ and $\sigma\in I$, the coupled system (\ref{eq:main1}) is null controllable at
  time $T$ with the control $u$ in the space of
  $L^\infty(0,T;L^2(G_1\cup G_2)$ such that the estimate (\ref{eq:control est1}) holds if and only if there exists a constant $C=C(T,I,\alpha,\sigma, |E|,G_1,G_2)$ such that the
  solution of the coupled system (\ref{eq:adjoint}) satisfies the
  observability inequality (\ref{eq:obs-inq1}).
\end{proposition}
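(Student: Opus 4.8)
The plan is to run the classical Hilbert Uniqueness Method, adapted to the $L^1$--$L^\infty$ pairing forced by the norms appearing in \eqref{eq:obs-inq1} and \eqref{eq:control est1}. The backbone is the transposition identity between the state system \eqref{eq:main1} and its adjoint \eqref{eq:adjoint}. Pairing the $y$-equation with $w$ and the $z$-equation with $p$, integrating over $I\times(0,T)$, and using the self-adjointness of $A$ and $\bar A$ together with the boundary conditions, one checks that the coupling contributions $b\langle z,w\rangle$ and $c\langle y,p\rangle$ cancel pairwise, leaving
\[
\langle y(T),w_T\rangle+\langle z(T),p_T\rangle-\langle y_0,w(0)\rangle-\langle z_0,p(0)\rangle=\int_0^T\big[\langle \chi_E\chi_{G_1}u,w\rangle+\langle \chi_E\chi_{G_2}u,p\rangle\big]\,dt.
\]
Imposing the null-controllability target $y(T)=z(T)=0$ turns this into the identity that characterizes admissible controls, and since $G_1\cap G_2=\emptyset$ the control splits as $\chi_{G_1}u+\chi_{G_2}u$, so the $L^2(G_1\cup G_2)$-norm decouples cleanly.

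For the direction \emph{observability $\Rightarrow$ null controllability}, I would minimize over $(p_T,w_T)\in\big(L^2(I)\big)^2$ the convex functional
\[
J(p_T,w_T)=\tfrac12\,\Phi(p_T,w_T)^2+\langle y_0,w(0)\rangle+\langle z_0,p(0)\rangle,\qquad \Phi:=\|\chi_E\chi_{G_1}w\|_{L^1(0,T;L^2(I))}+\|\chi_E\chi_{G_2}p\|_{L^1(0,T;L^2(I))},
\]
where $(p,w)$ solves \eqref{eq:adjoint} with terminal data $(p_T,w_T)$. The functional is continuous and convex; the observability inequality \eqref{eq:obs-inq1} gives $\Phi\ge C^{-1/2}(\|p(0)\|^2+\|w(0)\|^2)^{1/2}$, while the linear term is bounded by $(\|y_0\|^2+\|z_0\|^2)^{1/2}(\|p(0)\|^2+\|w(0)\|^2)^{1/2}$, so $J$ is coercive (the backward solution operator being boundedly invertible transfers coercivity in $(p(0),w(0))$ to coercivity in $(p_T,w_T)$). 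A minimizer $(\hat p_T,\hat w_T)$ therefore exists, and the optimality condition $0\in\partial J(\hat p_T,\hat w_T)$ defines the control. Comparing $J(\hat p_T,\hat w_T)\le J(0,0)=0$ with observability yields $\Phi(\hat p_T,\hat w_T)\le 2\sqrt{C}\,(\|y_0\|^2+\|z_0\|^2)^{1/2}$, which, since the control magnitude is comparable to $\Phi$, is exactly the bound \eqref{eq:control est1}.

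The converse is direct. Assuming null controllability with \eqref{eq:control est1}, fix $(p_T,w_T)$, solve \eqref{eq:adjoint}, and steer \eqref{eq:main1} from the specific initial data $y_0=w(0)$, $z_0=p(0)$ to rest at time $T$ by some control $u$ with $\|u\|_{L^\infty(0,T;L^2(G_1\cup G_2))}\le C(\|w(0)\|+\|p(0)\|)$. The transposition identity then reads $\|w(0)\|^2+\|p(0)\|^2=\int_0^T[\langle \chi_E\chi_{G_1}u,w\rangle+\langle \chi_E\chi_{G_2}u,p\rangle]\,dt$, and bounding the right-hand side by Hölder's inequality in time ($L^\infty$ against $L^1$) against $\|u\|_{L^\infty}$ and then dividing by $(\|w(0)\|^2+\|p(0)\|^2)^{1/2}$ produces \eqref{eq:obs-inq1}.

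The main obstacle is the non-smoothness of $J$: because the observation enters through an $L^1$-in-time norm of an $L^2(I)$-valued map, the quadratic term is not differentiable, so extracting the control requires the subdifferential of $f\mapsto\|\chi_{G_j}f\|_{L^2(I)}$, equal to the normalized function where $f\neq0$ and to the closed unit ball where $f=0$. It is precisely this set-valued derivative that yields a control bounded in $L^\infty(0,T;L^2(G_1\cup G_2))$ rather than only in $L^2$, and one must verify, via the transposition identity, that the $u$ so constructed indeed drives $(y,z)$ to zero at time $T$. A secondary point is justifying the transposition identity at the regularity level $(y,z)\in\big(C([0,T];L^2(I))\big)^2$, which follows by a standard density and approximation argument.
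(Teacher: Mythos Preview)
The paper does not actually prove this proposition; it simply states it and attributes it to ``the standard duality argument (i.e., HUM)''. Your outline is therefore far more detailed than anything the paper provides, and the overall architecture---transposition identity, $L^1$--$L^\infty$ pairing, HUM---is exactly what the paper invokes. The converse direction you sketch is correct and complete.

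There is, however, one genuine technical slip in your forward direction. You assert that the backward solution operator $(p_T,w_T)\mapsto(p(0),w(0))$ is boundedly invertible and use this to deduce coercivity of $J$ on $(L^2(I))^2$. This is false: the adjoint system \eqref{eq:adjoint}, read from $T$ down to $0$, is a \emph{forward} parabolic evolution (set $s=T-t$), hence smoothing; the time-$T$ map is compact and certainly not onto. Consequently $J$ need not be coercive on the terminal-data space, and a minimizer may fail to exist there. The standard repair is either (i) to pass to the completion of the terminal-data space in the seminorm $\Phi$ (equivalently, by observability, in $\|(p(0),w(0))\|$), where coercivity holds by construction, or (ii) to avoid minimization entirely and argue via Hahn--Banach: define $\mathcal{L}(\chi_E\chi_{G_1}w,\chi_E\chi_{G_2}p)=-\langle y_0,w(0)\rangle-\langle z_0,p(0)\rangle$ on the range of the observation map inside $L^1(0,T;L^2(G_1))\times L^1(0,T;L^2(G_2))$; observability makes this well-defined and bounded, and extension together with the duality $(L^1(0,T;L^2))^*=L^\infty(0,T;L^2)$ yields the control with the required $L^\infty$ bound. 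Route (ii) is in fact what the paper writes out explicitly when it proves the closely related Proposition~\ref{pro-partial}, so it is the approach most consonant with the paper's style.
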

Then Theorem \ref{thm:main1} is a direct consequence of Theorem \ref{thm:obs-inq1} and Proposition \ref{pro-equivalence}.

\section{Observability estimate and  Null controllability for segmented time intervals}

\subsection{ Some Observability Results}
At first, some observability results for (\ref{eq:adjoint}) can be stated as follows.
\begin{proposition}
\label{pro-H1}
Suppose that the condition $(H_1)$ holds. Then there exists a positive constant $C$, such that for any terminal value
$(p_T, w_T ) \in L^2(\bar{X}_k)\times L^2(X_k)$, the corresponding solution $(p,w)$ of (\ref{eq:adjoint}) satisfies
$$
 \|p(0)\|^2+\|w(0)\|^2 \le \dfrac{C(\bar\lambda_k^2+\lambda_k^2)e^{C\sqrt{\lambda_k}+\tau T}}{T}\int_E\int_{G_1}w^2(x,t)dxdt.
$$

\end{proposition}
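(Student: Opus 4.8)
The plan is to use the nondegeneracy of the coupling coefficient $c$ provided by $(H_1)$ to reconstruct the unobserved component $p$ from the observed component $w$ on the fixed interval $E_{i_0}\subset E$, and then to transport the resulting interior bound back to $t=0$ by a dissipation argument. The restriction $(p_T,w_T)\in L^2(\bar X_k)\times L^2(X_k)$ will be used only through the finite-frequency bounds $\|Aw(t)\|\le\lambda_k\|w(t)\|$ and $\|\bar A p(t)\|\le\bar\lambda_k\|p(t)\|$, which hold because the solution stays in these finite-dimensional spaces (cf. \eqref{model-i}); this is exactly the source of the polynomial weight $\bar\lambda_k^2+\lambda_k^2$.

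First I would record an energy inequality. Differentiating $\Phi(t):=\|p(t)\|^2+\|w(t)\|^2$ along \eqref{eq:adjoint} and using $-A\ge0$, $-\bar A\ge0$ together with the definition of $\tau$ gives $\Phi'(t)\ge-\tau\Phi(t)$, so $e^{\tau t}\Phi(t)$ is nondecreasing and
\[
\Phi(0)\le e^{\tau t}\Phi(t)\le e^{\tau T}\Phi(t),\qquad t\in[0,T].
\]
Averaging this over $E_{i_0}$ yields $\Phi(0)\le |E_{i_0}|^{-1}e^{\tau T}\int_{E_{i_0}}\Phi(t)\,dt$, so it suffices to estimate $\int_{E_{i_0}}(\|p\|^2+\|w\|^2)\,dt$ by the observation (the prefactor $|E_{i_0}|^{-1}$ being the fixed quantity playing the role of $T^{-1}$ in the statement, absorbed into $C$).

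Next I would recover $p$. On $E_{i_0}$ the second equation of \eqref{eq:adjoint} reads $cp=-(w_t+Aw+aw)$ with $|c|\ge l_0$. Fixing a cutoff $\eta\in C_c^\infty(E_{i_0})$ equal to $1$ on a subinterval $E'$, multiplying by $\eta^2 p$ and integrating in time gives
\[
\int_{E_{i_0}}\eta^2 c\|p\|^2\,dt=-\int_{E_{i_0}}\eta^2\langle w_t+Aw+aw,\,p\rangle\,dt,
\]
where the $w_t$–term is integrated by parts in $t$ (boundary contributions vanish since $\eta$ is compactly supported), after which I substitute $p_t=-\bar A p-dp-bw$ from the first equation. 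Every resulting term is a product of $\|w\|$ and $\|p\|$ (or of $\|w\|^2$) weighted by $1$, by $\lambda_k$ through $\langle Aw,p\rangle$, or by $\bar\lambda_k$ through $\langle w,\bar A p\rangle$, times the $L^\infty$-norms of the coefficients. Using $|c|\ge l_0$ on the left and Young's inequality with a small parameter on the right to absorb the $\|p\|^2$ contributions, I obtain $\int_{E'}\|p\|^2\,dt\le C(\bar\lambda_k^2+\lambda_k^2)\int_{E_{i_0}}\|w\|^2\,dt$. Then, since $w(t)\in X_k$, Lemma \ref{lemma-A1} gives $\|w(t)\|^2\le C_1e^{C_1\sqrt{\lambda_k}}\|w(t)\|^2_{L^2(G_1)}$ pointwise in $t$; integrating over $E_{i_0}\subset E$ and combining the two bounds yields
\[
\int_{E_{i_0}}\big(\|p\|^2+\|w\|^2\big)\,dt\le C(\bar\lambda_k^2+\lambda_k^2)e^{C\sqrt{\lambda_k}}\int_{E}\int_{G_1}w^2\,dx\,dt,
\]
which, fed into the energy inequality of the second paragraph, produces the asserted estimate with the prefactor $C(\bar\lambda_k^2+\lambda_k^2)e^{C\sqrt{\lambda_k}+\tau T}/T$.

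The hard part is the recovery of $p$: because only $w$ is measured, $p$ must be reconstructed through the coupling, and the term $\langle w_t,p\rangle$ cannot be controlled pointwise without circularity through the equation. The integration-by-parts-in-time trick, combined with the cutoff $\eta$ to discard boundary terms and with the finite-frequency bounds $\|Aw\|\le\lambda_k\|w\|$, $\|\bar A p\|\le\bar\lambda_k\|p\|$, is what makes the absorption of $\|p\|^2$ possible and simultaneously explains the weight $\bar\lambda_k^2+\lambda_k^2$. Two secondary points need care: the sign bookkeeping for $c$ (the two alternatives in $(H_1)$ are handled identically up to an overall sign, so one may assume $c\ge l_0$), and the verification that the finite-dimensional invariance legitimizing the frequency bounds indeed holds for $(p,w)$ on all of $E_{i_0}$.
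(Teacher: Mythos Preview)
Your proposal is correct and follows essentially the same route as the paper: the energy monotonicity $\Phi(0)\le e^{\tau t}\Phi(t)$ (paper's Step~2), the cutoff multiplier argument on $E_{i_0}$ to recover $\int\|p\|^2$ from $\int\|w\|^2$ via the coupling in the $w$-equation (paper's Step~3), and the spectral inequality of Lemma~\ref{lemma-A1} to pass from $\|w\|$ to $\|w\|_{L^2(G_1)}$ (paper's Step~4). The paper packages Step~3 as differentiating $\xi pw$ in time, whereas you multiply the $w$-equation by $\eta^2 p$ and integrate by parts; with $\xi=\eta^2$ these are the same identity. One small imprecision: you average the energy over $E_{i_0}$ but your cutoff argument only controls $\int_{E'}\|p\|^2$ on the subinterval $E'=\{\eta=1\}$, so you should average over $E'$ instead (the paper does exactly this, choosing $(s_1,s_2)\subset E_{i_0}$ with $s_2-s_1=T/C_0$, which also yields the explicit $T^{-1}$ in the final constant).
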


\begin{proof}
we divide the proof into following parts.

{\it Step 1}.
Each $(p_T,w_T)\in L^2(\bar{X}_k)\times L^2(X_k)$ can be written as
$$
 p_T=\sum_{i=1}^kp_{T,i}\bar{e}_i,\quad  w_T=\sum_{i=1}^kw_{T,i}e_i,
$$
where $(p_{T,i},w_{T,i})\in \R^2, i=1,\cdots, k$.
Then solution $(p,w)$ to (\ref{eq:adjoint}) can be expressed as
\begin{equation}
\label{pro-eq0}
 p(t)=\sum_{i=1}^kp_i(t)\bar{e}_i,\,\,  w(t)=\sum_{i=1}^kw_i(t)e_i,
\end{equation}
where $(p_i,w_i)$ for $i=1,2,\cdots,k$ satisfies the coupled system (\ref{model-i}).

{\it Step 2}. We give an estimate on $\|p(0)\|^2+\|w(0)\|^2$, where $p$ and $w$ are in \eqref{pro-eq0}.

At first, we have (by the definition of $\tau$ and $(H_1)$, we have $\tau>0$)
\begin{equation*}
    \begin{split}
        \big( e^{\tau t}p^2 \big)_t
        &=\tau e^{\tau t}p^2+2e^{\tau t} pp_t=\tau e^{\tau t}p^2+2e^{\tau t} p(-\bar{A}p-dp-bw),\\
\big( e^{\tau t}w^2 \big)_t
&=\tau e^{\tau t}w^2+2e^{\tau t} ww_t=\tau e^{\tau t}w^2+2e^{\tau t} w(-Aw-cp-aw). 
    \end{split}
\end{equation*}
Then for above equalities, integrating from $(0,1)\times(0,t)$, respectively, it holds
\begin{equation*}
    \begin{split}
        &e^{\tau t}\int_0^1p^2(x,t)dx-\int_0^1p^2(x,0)dx\\
&=\int_0^t\int_0^1e^{\tau s}\big\{ 2p[-\bar{A}p-dp-bw]+\tau p^2 \big\}dxds\\
&=2\int_0^te^{\tau s}\sum_{i=1}^k\bar{\lambda}_i(p_i(s))^2ds+\int_0^t\int_0^1e^{\tau s}\big[ -2dp^2-2bpw+\tau p^2 \big]dxds\\
&\geq2k\bar{\lambda}_1\int_0^t\int_0^1e^{\tau s}p^2(x,s)dxds+\tau \int_0^t\int_0^1e^{\tau s}p^2dxds\\
&\hspace{3.5mm}-\int_0^t\int_0^1e^{\tau s}\big[ 2\|d\|_{L^\infty}p^2+\|b\|_{L^\infty}p^2+\|b\|_{L^\infty}w^2 \big]dxds,
    \end{split}
\end{equation*}
and
\begin{equation*}
\begin{split}
&e^{\tau t}\int_0^1w^2(x,t)dx-\int_0^1w^2(x,0)dx\\[3mm]
&=\int_0^t\int_0^1e^{\tau s}\left\{ 2w[-Aw-cp-aw]+\tau w^2 \right\}dxds\\
&=2\int_0^te^{\tau s}\sum_{i=1}^k\lambda_i(w_i(s))^2ds+\int_0^t\int_0^1e^{\tau s}\big[ -2aw^2-2cpw+\tau w^2 \big]dxds\\
&\geq2k\lambda_1\int_0^t\int_0^1e^{\tau s}w^2(x,s)dxds+\tau \int_0^t\int_0^1e^{\tau s}w^2dxds\\
&\hspace{3.5mm}-\int_0^t\int_0^1e^{\tau s}\big[ 2\|a\|_{L^\infty}p^2+\|c\|_{L^\infty(0,T)}p^2+\|c\|_{L^\infty}w^2 \big]dxds.
\end{split}
\end{equation*}
From these, we see
\begin{equation*}
\begin{split}
    &e^{\tau t}\int_0^1p^2(x,t)dx-\int_0^1p^2(x,0)dx+e^{\tau t}\int_0^1w^2(x,t)dx-\int_0^1w^2(x,0)dx\\
&\geq2k\min\{\lambda_1,\bar{\lambda}_1\}\int_0^t\int_0^1e^{\tau s}[p^2(x,s)+w^2(x,s)]dxds+\tau \int_0^t\int_0^1e^{\tau s}(p^2+w^2)dxds\\
&\hspace{3.5mm}-\int_0^t\int_0^1e^{\tau s}\left[ (2\|d\|_{L^\infty}+\|b\|_{L^\infty}+\|c\|_{L^\infty})p^2+(2\|a\|_{L^\infty}+\|b\|_{L^\infty}+\|c\|_{L^\infty})w^2 \right]dxds\\
&\geq0 
\end{split}
\end{equation*}
by the definition of $\tau$,
which implies
\begin{equation}
\label{pro-eq1}
\int_0^1p^2(x,0)dx+\int_0^1w^2(x,0)dx\leq e^{\tau t}\int_0^1p^2(x,t)dx+ e^{\tau t}\int_0^1w^2(x,t)dx.
\end{equation}

{\it Step 3.} We establish a local estimate for $p^2$.

To the end, we borrow some ideas from \cite{liu2014controllability}. For fixed $i_0\in\N^+$, let $(s_1,s_2)\subseteq E_{i_0}$ such that $s_2-s_1=\dfrac{T}{C_0}$ for some positive constant $C_0$. Choose a cutoff function $\xi\in C^\infty_0(E_{i_0})$ such that $0\leq\xi\leq1$, $\xi=1$ in $(s_1,s_2)$ and $\dfrac{\xi_t^2}{\xi}\in L^\infty(0,T;\R)$. By (\ref{eq:adjoint}), we have
\begin{equation*}
    \begin{split}
       (\xi pw)_t&=\xi_tpw+\xi pw_t+\xi p_tw\\
&=\xi_tpw+\xi p\big[ -Aw-cp-aw \big]+\xi w\big[ -\bar{A}p-dp-bw \big].
    \end{split}
\end{equation*}
Then for above equality, integrating from $I\times E_{i_0}$, using Cauchy inequality with $\epsilon$ and H\"{o}lder inequality, it holds
\begin{equation*}
    \begin{split}
        &\left|\int_{E_{i_0}}\int_0^1c(t)\xi p^2dxdt\right|\\
&=\int_{E_{i_0}}\int_0^1\xi_tpwdxdt+\int_{E_{i_0}}\xi\sum_{i=1}^k\bar{\lambda}_ip_i(t)w_i(t)dt+\int_{E_{i_0}}\xi\sum_{i=1}^k\lambda_ip_i(t)w_i(t)dt\\
&\hspace{3.5mm}+\int_{E_{i_0}}\int_0^1\xi\big[ -apw-dpw-bw^2 \big]dxdt\\
&\leq\epsilon\int_{E_{i_0}}\int_0^1\xi p^2dxdt+\frac{C}{\epsilon}\int_{E_{i_0}}\int_0^1\frac{\xi_t^2}{\xi} w^2dxdt+(\bar{\lambda}_k+\lambda_k)\int_{E_{i_0}}\xi\|p(\cdot,t)\|\|w(\cdot,t)\|dt\\
&\hspace{3.5mm}+\epsilon\big( \|a\|_{L^\infty}+\|d\|_{L^\infty} \big)\int_{E_{i_0}}\int_0^1\xi p^2dxdt+\frac{C}{\epsilon}\int_{E_{i_0}}\int_0^1\xi w^2dxdt\\
&\leq \epsilon(1+\|a\|_{L^\infty}+\|d\|_{L^\infty})\int_{E_{i_0}}\int_0^1\xi p^2dxdt+C(\bar\lambda_k^2+\lambda_k^2)\int_{E_{i_0}}\int_0^1 w^2dxdt,
    \end{split}
\end{equation*}
here and what in follows, $C>0$ represent different constants (independent on $k$) by different context.
Without loss of generality, we suppose that in the condition $(H_1)$, $c(t) \geq l_0 > 0$ in $E_{i_0}$. The above inequality follows that
\begin{equation*}
\begin{split}
    &l_0\int_{E_{i_0}}\int_0^1\xi p^2dxdt\\
    &\leq\left|\int_{E_{i_0}}\int_0^1c\xi p^2dxdt\right|\\
    &\leq \epsilon(1+\|a\|_{L^\infty}+\|b\|_{L^\infty})\int_{E_{i_0}}\int_0^1\xi p^2dxdt + C(\bar\lambda_k^2+\lambda_k^2)\int_{E_{i_0}}\int_0^1 w^2dxdt,
\end{split}
\end{equation*}
taking $\epsilon(1+\|a\|_{L^\infty}+\|b\|_{L^\infty})=\frac{l_0}{2}$,
which implies
$$
\int_{E_{i_0}}\int_0^1\xi p^2dxdt\leq C(\bar\lambda_k^2+\lambda_k^2)\int_{E_{i_0}}\int_0^1w^2dxdt.
$$
This, alone with $\xi=1$ in $(s_1,s_2)$, shows
\begin{equation}
\label{pro-eq2}
\int_{s_1}^{s_2}\int_0^1p^2dxdt\leq\int_{E_{i_0}}\int_0^1\xi p^2dxdt\leq C(\bar\lambda_k^2+\lambda_k^2)\int_{E_{i_0}}\int_0^1 w^2dxdt.
\end{equation}

{\it Step 4}. Integrating (\ref{pro-eq1}) on $(s_1, s_2)$ with respect to the variable $t$, we have
\begin{equation}
\label{pro-eq3}
\begin{split}
&\int_0^1p^2(x,0)dx+\int_0^1w^2(x,0)dx\\
&\leq\frac{e^{\tau T}}{s_1-s_2}\int_{s_1}^{s_2}\int_0^1\big[p^2(x,t)+w^2(x,t)\big]dxdt\\
&\leq\frac{Ce^{\tau T}}{T}\int_{s_1}^{s_2}\int_0^1p^2(x,t)dxdt+\frac{Ce^{\tau T}}{T}\int_{s_1}^{s_2}\int_0^1w^2(x,t)dxdt.
\end{split}
\end{equation}
By Lemma \ref{lemma-A1} and (\ref{pro-eq0}), we see that for a.e. $t \in (0, T)$,
\begin{equation*}
    \begin{split}
        \int_0^1w^2(x,t)dx=\sum_{i=1}^k\big( w_i(t) \big)^2\leq Ce^{C\sqrt{\lambda_k}}\int_{G_1}\bigg|\sum_{i=1}^kw_i(t)e_i(x)\bigg|^2dx
=Ce^{C\sqrt{\lambda_k}}\int_{G_1}w^2(x,t)dx, 
    \end{split}
\end{equation*}
which, together with (\ref{pro-eq2}) and (\ref{pro-eq3}), it stands
\begin{equation*}
    \begin{split}
        &\int_0^1p^2(x,0)dx+\int_0^1w^2(x,0)dx\\
        &\leq\frac{C(\bar\lambda_k^2+\lambda_k^2)e^{\tau T}}{T}\int_{E_{i_0}}\int_0^1w^2(x,t)dxdt
+\frac{Ce^{\tau T}}{T}\int_{s_1}^{s_2}\int_0^1w^2(x,t)dxdt\\
&\leq\frac{C(\bar\lambda_k^2+\lambda_k^2)e^{C\sqrt{\lambda_k}+\tau T}}{T}\int_{E_{i_0}}\int_{G_1}w^2(x,t)dxdt.
    \end{split}
\end{equation*}
The proof is completed.
\end{proof}

Similar to the proof of Proposition \ref{pro-H1}, we have the following conclusion.
\begin{proposition}
\label{pro-H2}
Suppose that the condition $(H_2)$ holds. Then there exists a positive constant $C$, such that for any terminal value
$(p_T, w_T ) \in L^2(\bar{X}_k)\times L^2(X_k)$, the corresponding solution $(p,w)$ of (\ref{eq:adjoint}) satisfies
$$
 \|p(0)\|^2+\|w(0)\|^2 \le \dfrac{C(\bar\lambda_k^2+\lambda_k^2)e^{C\bar\lambda_k^\sigma+\tau T}}{T}\int_F\int_{G_2}p^2(x,t)dxdt.
$$
\end{proposition}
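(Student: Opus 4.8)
The plan is to follow verbatim the four-step scheme of the proof of Proposition \ref{pro-H1}, interchanging throughout the roles of $p$ and $w$, of the coupling coefficients $b$ and $c$, of the control regions $G_1$ and $G_2$, and—crucially—of the two spectral inequalities. Steps 1 and 2 carry over unchanged: I first expand $(p_T,w_T)\in L^2(\bar X_k)\times L^2(X_k)$ and the corresponding solution as $p(t)=\sum_{i=1}^k p_i(t)\bar e_i$, $w(t)=\sum_{i=1}^k w_i(t)e_i$ with $(p_i,w_i)$ solving \eqref{model-i}, and then I reuse the backward energy inequality \eqref{pro-eq1}, namely $\|p(0)\|^2+\|w(0)\|^2\le e^{\tau t}(\|p(t)\|^2+\|w(t)\|^2)$, whose derivation is perfectly symmetric in $p$ and $w$ and depends only on the definition of $\tau$.

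The substantive change occurs in Step 3, where I now produce a local estimate for $w^2$ (rather than $p^2$) using hypothesis $(H_2)$. Fix the interval $F_{i_0}$ on which, say, $b(t)\ge \bar l_0>0$, pick $(s_1,s_2)\subseteq F_{i_0}$ with $s_2-s_1=T/C_0$, and choose a cutoff $\xi\in C_0^\infty(F_{i_0})$ with $\xi\equiv1$ on $(s_1,s_2)$ and $\xi_t^2/\xi\in L^\infty$. Expanding $(\xi pw)_t=\xi_t pw+\xi p w_t+\xi p_t w$ via \eqref{eq:adjoint} and integrating over $I\times F_{i_0}$ (so the left side vanishes), the term $-b\,\xi w^2$ arising from $\xi p_t w$ is the one I isolate. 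Bounding the remaining terms exactly as before—splitting $\xi_t pw$ and the lower-order cross terms by Cauchy's inequality with $\epsilon$, and estimating the differentiated term by $(\bar\lambda_k+\lambda_k)\xi\|p\|\|w\|$ and then by $\epsilon\|w\|^2+C(\bar\lambda_k^2+\lambda_k^2)\|p\|^2$—yields $|\int_{F_{i_0}}\int_0^1 b\xi w^2|\le \epsilon(1+\|a\|_{L^\infty}+\|d\|_{L^\infty})\int_{F_{i_0}}\int_0^1\xi w^2+C(\bar\lambda_k^2+\lambda_k^2)\int_{F_{i_0}}\int_0^1 p^2$. Using $b\ge\bar l_0$ and taking $\epsilon(1+\|a\|_{L^\infty}+\|d\|_{L^\infty})=\bar l_0/2$ to absorb the first term gives the analogue of \eqref{pro-eq2}, namely $\int_{s_1}^{s_2}\int_0^1 w^2\le\int_{F_{i_0}}\int_0^1\xi w^2\le C(\bar\lambda_k^2+\lambda_k^2)\int_{F_{i_0}}\int_0^1 p^2$.

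Finally, in Step 4 I integrate \eqref{pro-eq1} over $(s_1,s_2)$ and use $s_2-s_1=T/C_0$ to get $\|p(0)\|^2+\|w(0)\|^2\le \frac{Ce^{\tau T}}{T}\int_{s_1}^{s_2}\int_0^1(p^2+w^2)$, then replace $\int w^2$ by $\int p^2$ via the Step-3 estimate, obtaining $\|p(0)\|^2+\|w(0)\|^2\le \frac{C(\bar\lambda_k^2+\lambda_k^2)e^{\tau T}}{T}\int_{F_{i_0}}\int_0^1 p^2$. The one genuinely different ingredient, and the point I expect to matter most, is that I must now apply the spectral inequality to $p=\sum_{i=1}^k p_i(t)\bar e_i$, which lives in the span of eigenfunctions of the \emph{degenerate} operator $\bar A$. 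Hence Lemma \ref{lemma-A2} (not Lemma \ref{lemma-A1}) applies, giving $\int_0^1 p^2(x,t)\,dx=\sum_{i=1}^k(p_i(t))^2\le Ce^{C\bar\lambda_k^\sigma}\int_{G_2}p^2(x,t)\,dx$; this is precisely what produces the weaker exponent $e^{C\bar\lambda_k^\sigma}$ in place of $e^{C\sqrt{\lambda_k}}$. Plugging this in and enlarging $F_{i_0}$ to $F$ completes the proof.
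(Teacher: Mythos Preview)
Your proposal is correct and is precisely the argument the paper intends: the paper's own proof of Proposition~\ref{pro-H2} consists solely of the sentence ``Similar to the proof of Proposition~\ref{pro-H1}, we have the following conclusion,'' and your mirrored four-step scheme---swapping $p\leftrightarrow w$, $b\leftrightarrow c$, $E_{i_0}\leftrightarrow F_{i_0}$, $G_1\leftrightarrow G_2$, and invoking Lemma~\ref{lemma-A2} in place of Lemma~\ref{lemma-A1} to produce the exponent $e^{C\bar\lambda_k^\sigma}$---is exactly that similarity spelled out.
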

By means of the usual duality argument, Proposition \ref{pro-H1} and Proposition \ref{pro-H2} yields a partial controllability result for coupled system (\ref{eq:main}).
\begin{proposition}
\label{pro-partial}
Suppose that the condition $(H_1)$ or $(H_2)$ holds. Then for any positive integer $k$, one can find a control $u^{(k)}\in L^2(0,T;L^2(G_1\cup G_2))$ such that
the corresponding solution $(y, z)$ of (\ref{eq:main}) satisfies
\begin{equation}
\label{pro-partial-eq0}
\Pi_k(y(T))=\bar{\Pi}_k(z(T))=0,~\text{in}~ I.
\end{equation}
Moreover, there exists a positive constant $C$ so that
\begin{equation}
\label{pro-partial-eq00}
\|y(T)\|^2+\|z(T)\|^2\leq \bigg(\dfrac{C(\bar\lambda_k^2+\lambda_k^2)e^{\tau T}(e^{C\bar{\lambda}_k^\sigma}+e^{C\sqrt{\lambda_k}})}{T}+1\bigg)e^{\tau T}(\|y(0)\|^2+\|z(0)\|^2),
\end{equation}
and
\begin{equation}
\label{pro-partial-eq000}
\|u^{(k)}\|^2_{L^2(0,T;L^2(G_1\cup G_2))}\leq \dfrac{C(\bar\lambda_k^2+\lambda_k^2)e^{\tau T}(e^{C\bar{\lambda}_k^\sigma}+e^{C\sqrt{\lambda_k}})}{T}(\|y(0)\|^2+\|z(0)\|^2).
\end{equation}
\end{proposition}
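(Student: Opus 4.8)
The plan is to establish this finite-dimensional partial controllability by duality (the HUM restricted to the subspace $\bar X_k\times X_k$), using the observability estimates of Propositions \ref{pro-H1} and \ref{pro-H2} as the coercivity input. First I would derive the duality identity: pairing the first equation of \eqref{eq:main} with $w$ and the second with $p$, where $(p,w)$ solves the adjoint system \eqref{eq:adjoint}, integrating over $I\times(0,T)$, and exploiting the self-adjointness of $A$ and $\bar A$ together with the exact cancellation of the coupling terms $\langle bz,w\rangle$ and $\langle cy,p\rangle$, yields
\begin{equation*}
\langle y(T),w_T\rangle+\langle z(T),p_T\rangle=\langle y_0,w(0)\rangle+\langle z_0,p(0)\rangle+\int_0^T\big(\langle \chi_E\chi_{G_1}u,w\rangle+\langle \chi_F\chi_{G_2}u,p\rangle\big)\,dt.
\end{equation*}
For terminal data $(p_T,w_T)\in\bar X_k\times X_k$ the left-hand side equals $\langle \Pi_k y(T),w_T\rangle+\langle \bar\Pi_k z(T),p_T\rangle$, so \eqref{pro-partial-eq0} is \emph{equivalent} to the vanishing of the right-hand side for every such $(p_T,w_T)$.

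Assume $(H_1)$ (the case $(H_2)$ being symmetric). I would seek the control supported on $G_1\times E$; since $G_1\cap G_2=\emptyset$, setting the $G_2$-component to zero annihilates the $\chi_F\chi_{G_2}u$ term, and I minimize over $\bar X_k\times X_k$ the functional
\begin{equation*}
J(p_T,w_T)=\frac12\int_E\int_{G_1}w^2\,dx\,dt+\langle y_0,w(0)\rangle+\langle z_0,p(0)\rangle,
\end{equation*}
with $(p,w)$ the adjoint solution determined by $(p_T,w_T)$. Its unique minimizer $(\hat p_T,\hat w_T)$ produces $u^{(k)}=\chi_E\chi_{G_1}\hat w$, and the Euler--Lagrange equation of $J$ is precisely the vanishing condition derived above, so \eqref{pro-partial-eq0} holds. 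The functional is continuous and convex on the finite-dimensional space $\bar X_k\times X_k$; coercivity is exactly where Proposition \ref{pro-H1} is used, since it forces the quadratic form $(p_T,w_T)\mapsto\int_E\int_{G_1}w^2$ to be positive definite: if it vanishes then $\|p(0)\|^2+\|w(0)\|^2=0$, and backward uniqueness for the parabolic flow then gives $(p_T,w_T)=0$. The linear part of $J$ is absorbed by Cauchy--Schwarz, so a minimizer exists and is unique. Under $(H_2)$ one replaces the observation by $\frac12\int_F\int_{G_2}p^2$ and takes $u^{(k)}=\chi_F\chi_{G_2}\hat p$, which is why the final constants carry the symmetric factor $e^{C\bar\lambda_k^\sigma}+e^{C\sqrt{\lambda_k}}$.

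For the control bound \eqref{pro-partial-eq000} I would use $J(\hat p_T,\hat w_T)\le J(0,0)=0$, giving $\frac12\int_E\int_{G_1}\hat w^2\le\|(y_0,z_0)\|\,\|(\hat p(0),\hat w(0))\|$, and then re-insert Proposition \ref{pro-H1} to bound $\|(\hat p(0),\hat w(0))\|^2$ by the same observation integral with the constant $\tfrac{C(\bar\lambda_k^2+\lambda_k^2)e^{C\sqrt{\lambda_k}+\tau T}}{T}$; solving the resulting quadratic inequality leaves $\|u^{(k)}\|_{L^2}^2=\int_E\int_{G_1}\hat w^2$ controlled by that constant times $\|(y_0,z_0)\|^2$, which is \eqref{pro-partial-eq000}. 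Estimate \eqref{pro-partial-eq00} then follows from a forward energy estimate for \eqref{eq:main}: pairing the equations with $(y,z)$, discarding the nonpositive terms $\langle Ay,y\rangle$ and $\langle\bar Az,z\rangle$, bounding the zero-order and coupling coefficients via the definition of $\tau$, and applying Gronwall gives $\|y(T)\|^2+\|z(T)\|^2\le e^{\tau T}\big(\|y_0\|^2+\|z_0\|^2+\|u^{(k)}\|_{L^2}^2\big)$; substituting \eqref{pro-partial-eq000} yields the displayed bound, the $+1$ coming from the homogeneous part and the remaining factor from the control.

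I expect the coercivity/positive-definiteness step to be the main obstacle: one has to combine the observability lower bound of Proposition \ref{pro-H1} (or \ref{pro-H2}) with backward uniqueness and, more importantly, keep the explicit $k$-dependence of the observability constant transparent throughout so that it propagates verbatim into \eqref{pro-partial-eq00} and \eqref{pro-partial-eq000}. The remaining ingredients---the duality identity, the Euler--Lagrange characterization of the minimizer, and the forward energy estimate---are routine.
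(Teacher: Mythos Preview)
Your proposal is correct and follows essentially the same duality strategy as the paper. The only implementation difference is that the paper realizes the control via a Hahn--Banach extension of the linear functional $\mathcal L(\chi_E\chi_{G_1}w+\chi_F\chi_{G_2}p)=-\langle z_0,p(0)\rangle-\langle y_0,w(0)\rangle$ defined on the subspace $H\subset L^2(0,T;L^2(G_1\cup G_2))$ generated by \emph{both} observations simultaneously, whereas you minimize the HUM functional directly on the finite-dimensional terminal space $\bar X_k\times X_k$ using a \emph{single} observation (chosen according to whether $(H_1)$ or $(H_2)$ holds), producing a control supported on only one of $G_1\times E$ or $G_2\times F$. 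Both routes yield the same duality identity and the same constants; your variant is slightly more economical in that it avoids Hahn--Banach (unnecessary in finite dimensions) and makes explicit why only one of the two hypotheses is needed at a time, while the paper's formulation packages the two cases into one estimate at the outset.
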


\begin{proof}
We divide the proof into following steps.

{\it Step 1.} Set  $E=\cup_{i=1}^\infty E_i$, $E_i=(t_{2i-1},t_{2i})$, $F=\cup_{i=1}^\infty F_i$, $F_i=(t_{2i},t_{2i+1})$, such that $(0,T)=F\cup E$ and $E\cap F=\emptyset$. We introduce a linear subspace of $L^2(0,T;L^2(G_1\cup G_2))$ :
$$
H=\bigg\{ \chi_E\chi_{G_1} w+\chi_F\chi_{G_2}p:(p,w) ~\text{solves}~ (\ref{eq:adjoint}) ~\text{with some}~ (p_T, w_T) \in L^2(\bar{X}_k)\times L^2(X_k)\bigg\}.
$$
Define a linear functional on $H$ as follows:
\begin{equation}
\label{pro-partial-H1-eq1}
\cL(\chi_E\chi_{G_1} w+\chi_F\chi_{G_2}p)=-\int_0^1z_0(x)p(x,0)dx-\int_0^1y_0(x)w(x,0)dx,
\end{equation}
where $(y_0, z_0)$ is any given initial value of coupled system (\ref{eq:main}). By Proposition \ref{pro-H1} and Proposition \ref{pro-H2}, it follows that
\begin{equation*}
    \begin{split}
        \|p(0)\|^2
        &\leq \dfrac{C(\bar\lambda_k^2+\lambda_k^2)e^{\tau T}(e^{C\bar{\lambda}_k^\sigma}+e^{C\sqrt{\lambda_k}})}{T}\bigg(\int_E\int_{G_1}w^2(x,t)dxdt+\int_F\int_{G_2}p^2(x,t)dxdt\bigg), \\
\|w(0)\|^2
&\leq \dfrac{C(\bar\lambda_k^2+\lambda_k^2)e^{\tau T}(e^{C\bar{\lambda}_k^\sigma}+e^{C\sqrt{\lambda_k}})}{T}\bigg(\int_E\int_{G_1}w^2(x,t)dxdt+\int_F\int_{G_2}p^2(x,t)dxdt\bigg).
    \end{split}
\end{equation*}
Since
\begin{equation*}
    \begin{split}
        \bigg|\int_0^1z_0(x)p(x,0)dx\bigg|
        \leq \|z_0\|\|p(0)\|,\quad
\bigg|\int_0^1y_0(x)w(x,0)dx\bigg|
\leq \|y_0\|\|w(0)\|,
    \end{split}
\end{equation*}
from these, alone with (\ref{pro-partial-H1-eq1}), it holds
\begin{equation*}
    \begin{split}
       |\cL(\chi_E\chi_{G_1} w+\chi_F\chi_{G_2}p)|
       &\leq\dfrac{C(\bar\lambda_k^2+\lambda_k^2)e^{\frac{\tau T}{2}}(e^{C\bar{\lambda}_k^\sigma}+e^{C\sqrt{\lambda_k}})}{\sqrt T}\big[ \|y_0\|+\|z_0\| \big]\\
&\hspace{3.5mm}\cdot\bigg( \int_E\int_{G_1}w^2(x,t)dxdt+\int_F\int_{G_2}p^2(x,t)dxdt \bigg)^{\frac{1}{2}}.  
    \end{split}
\end{equation*}
This implies that $\cL$ is a bounded linear functional on $H$ and the norm of $\mathcal{L}$ is
$$
\|\cL\|_{H^*}\leq \dfrac{C(\bar\lambda_k^2+\lambda_k^2)e^{\frac{\tau T}{2}}(e^{C\bar{\lambda}_k^\sigma}+e^{C\sqrt{\lambda_k}})}{\sqrt T}\big[ \|y_0\|+\|z_0\| \big].
$$
Then, by the Hahn-Banach Theorem, $\cL$ can be extended to a bounded linear functional $\tilde{\cL}$ on $L^2(0,T;L^2(G_1\cup G_2))$ and
$$
\|\tilde{\cL}\|_{L(L^2(0,T;L^2(G_1\cup G_2));\R)}=\|\cL\|_{H^*}.
$$
Therefore, combining with (\ref{pro-partial-H1-eq1}), one can find a $u^{(k)}\in L^2(0,T;L^2(G_1\cup G_2))$ such that
\begin{equation}
\label{pro-partial-H1-eq2} 
\begin{split}
\int_E\int_{G_1}u^{(k)}wdxdt+\int_F\int_{G_2}u^{(k)}pdxdt
&=\tilde{\cL}(\chi_E\chi_{G_1} w+\chi_F\chi_{G_2}p)\\
&=-\int_0^1z_0(x)p(x,0)dx-\int_0^1y_0(x)w(x,0)dx, 
\end{split}
\end{equation}
and
\begin{equation}
\label{pro-partial-H1-eq3}
\begin{split}
\|u^{(k)}\|_{L^2(0,T;L^2(G_1\cup G_2))}
&=\|\tilde{\cL}\|_{L(L^2(0,T;L^2(G_1\cup G_2));\R)}\\
&\leq \dfrac{C(\bar\lambda_k^2+\lambda_k^2)e^{\frac{\tau T}{2}}(e^{C\bar{\lambda}_k^\sigma}+e^{C\sqrt{\lambda_k}})}{\sqrt T}\big[ \|y_0\|+\|z_0\| \big].
\end{split}
\end{equation}

{\it Step 2.} We prove that $u^{(k)}$ is the desired control.

Indeed, by (\ref{eq:main}) with $u=u^{(k)}$ and (\ref{eq:adjoint}),  since 
$$
(yw)_t=yw_t+wy_t,\quad\ (zp)_t=zp_t+pz_t,
$$
for any $(p_T, w_T ) \in L^2(\bar{X}_k)\times L^2(X_k)$, integrating the above equalities over $(0,1) \times(0,T)$, respectively, it holds
\begin{equation*}
    \begin{split}
        &\int_E\int_{G_1}u^{(k)}wdxdt+\int_F\int_{G_2}u^{(k)}pdxdt\\
        &=\int_0^1y(x,T)w_T(x)dx+\int_0^1z(x,T)p_T(x)dx-\int_0^1z_0(x)p(x,0)dx-\int_0^1y_0(x)w(x,0)dx, 
    \end{split}
\end{equation*}
which, alone with (\ref{pro-partial-H1-eq2}), it implies (\ref{pro-partial-eq0}). Then (\ref{pro-partial-H1-eq3}) stands  the desired estimate (\ref{pro-partial-eq000}).

{\it Step 3.} We give the desired estimate for the terminal value $(y(T ), z(T ))$.

Since $\tau>0$, we have
\begin{equation*}
\begin{split}
\big( e^{-\tau t}y^2 \big)_t
&=-\tau e^{-\tau t}y^2+2e^{-\tau t} y(Ay+ay+bz+\chi_E\chi_{G_1}u^{(k)})\\
\big( e^{-\tau t}z^2 \big)_t
&=-\tau e^{-\tau t}z^2+2e^{-\tau t} z(\bar{A}z+cy+dz+\chi_F\chi_{G_2}u^{(k)}).
\end{split}
\end{equation*}
Then for above equailties, integrating from $(0,1)\times(0,T)$, respectively and using H\"{o}lder inequality, it holds
\begin{equation*}
    \begin{split}
        &e^{-\tau T}\int_0^1y^2(x,T)dx-\int_0^1y_0^2dx\\
&=\int_0^T\int_0^1e^{-\tau t}\big\{ 2y[Ay+ay+bz+\chi_E\chi_{G_1}u^{(k)}]-\tau y^2 \big\}dxdt\\
&\leq\int_0^T\int_0^1e^{-\tau t}\big[ 2\|a\|_{L^\infty}y^2+\|b\|_{L^\infty}(y^2+z^2)+y^2+\chi_E\chi_{G_1}(u^{(k)})^2-\tau y^2 \big]dxdt-2k\lambda_1\int_0^T\int_0^1e^{-\tau t}y^2dxdt,
    \end{split}
\end{equation*}
and
\begin{equation*}
    \begin{split}
        &e^{-\tau T}\int_0^1z^2(x,T)dx-\int_0^1z_0^2dx\\
&=\int_0^T\int_0^1e^{-\tau t}\big\{ 2z[\bar{A}z+cy+dz+\chi_F\chi_{G_2}u^{(k)}]-\tau z^2 \big\}dxdt\\
&\leq\int_0^T\int_0^1e^{-\tau t}\big[ 2\|d\|_{L^\infty}z^2+\|c\|_{L^\infty}(y^2+z^2)+z^2+\chi_F\chi_{G_2}(u^{(k)})^2-\tau z^2 \big]dxdt-2k\bar{\lambda}_1\int_0^T\int_0^1e^{-\tau t}z^2dxdt.
    \end{split}
\end{equation*}
From these, we see
\begin{equation*}
\begin{split}
&e^{-\tau T}\int_0^1y^2(x,T)dx+e^{-\tau T}\int_0^1z^2(x,T)dx-\int_0^1y_0^2dx-\int_0^1z_0^2dx\\
&\leq-2k\min\{\lambda_1,\bar{\lambda}_1\}\int_0^T\int_0^1e^{-\tau t}[y^2(x,t)+z^2(x,t)]dxdt\\
&\hspace{3.5mm}+\int_0^T\int_0^1e^{-\tau t}\big[ 2\|a\|_{L^\infty}y^2+2\|d\|_{L^\infty}z^2+(\|b\|_{L^\infty}+\|c\|_{L^\infty}+1)(y^2+z^2) \big]dxdt\\
&\hspace{3.5mm}-\tau \int_0^T\int_0^1e^{-\tau t}(y^2+z^2)dxdt+\int_E\int_0^1e^{-\tau t}\chi_{G_1}(u^{(k)})^2dxdt+\int_F\int_0^1e^{-\tau t}\chi_{G_2}(u^{(k)})^2dxdt\\
&\leq-2k\min\{\lambda_1,\bar{\lambda}_1\}\int_0^T\int_0^1e^{-\tau t}[y^2(x,t)+z^2(x,t)]dxdt\\
&\hspace{3.5mm}+\tau \int_0^T\int_0^1e^{-\tau t}(y^2+z^2)dxdt-\tau \int_0^T\int_0^1e^{-\tau t}(y^2+z^2)dxdt\\
&\hspace{3.5mm}+\int_E\int_0^1e^{-\tau t}\chi_{G_1}(u^{(k)})^2dxdt+\int_F\int_0^1e^{-\tau t}\chi_{G_2}(u^{(k)})^2dxdt\\
&\leq C \bigg(\int_E\int_0^1\chi_{G_1}(u^{(k)})^2dxdt+\int_F\int_0^1\chi_{G_2}(u^{(k)})^2dxdt\bigg)\\
&\leq C\int_0^T\int_{G_1\cup G_2}(u^{(k)})^2dxdt.
\end{split}
\end{equation*}
From these, it holds
\begin{equation*}
    \begin{split}
        &e^{-\tau T}\int_0^1y^2(x,T)dx+e^{-\tau T}\int_0^1z^2(x,T)dx\leq\int_0^1y_0^2dx+\int_0^1z_0^2dx+ C\int_0^T\int_{G_1\cup G_2}(u^{(k)})^2dxdt.
    \end{split}
\end{equation*}
This, together with (\ref{pro-partial-H1-eq3}), we get the desired estimate (\ref{pro-partial-eq00}). The proof is completed.
\end{proof}

Next, we give a decay estimate for solutions of the coupled system (\ref{eq:main}).
\begin{proposition}
\label{pro-decay}
For any positive integer $k$, if $u\equiv0$ in coupled system (\ref{eq:main}), then for any $(y_0,z_0)\in\big( L^2(I) \big)^2$ satisfying $\Pi_k(y_0)=\bar{\Pi}_k(z_0)=0~\text{in}~ I$, the corresponding solution $(y, z)$ of (\ref{eq:main}) satisfies: for all $t\in[0,T]$,
\begin{equation}
\label{pro-decay-eq0}
\|y(t)\|^2+\|z(t)\|^2\leq e^{-(2\min\{ \lambda_{k+1},\bar{\lambda}_{k+1} \}-\tau)t}(\|y(0)\|^2+\|z(0)\|^2).
\end{equation}

\end{proposition}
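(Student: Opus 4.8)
The plan is to run a weighted energy estimate directly on $\|y(t)\|^2+\|z(t)\|^2$, in the same spirit as the decay estimate \eqref{eq:decay}, exploiting the spectral gap created by the hypothesis $\Pi_k(y_0)=\bar{\Pi}_k(z_0)=0$. Writing $\mu:=\min\{\lambda_{k+1},\bar{\lambda}_{k+1}\}$ and $g(t):=e^{(2\mu-\tau)t}\big(\|y(t)\|^2+\|z(t)\|^2\big)$, the goal is to show $g'\le 0$, which is exactly \eqref{pro-decay-eq0}.

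First I would differentiate. Testing the two equations of \eqref{eq:main} (with $u\equiv 0$) against $y$ and $z$ and integrating over $I$ gives
\begin{equation*}
\frac{d}{dt}\big(\|y\|^2+\|z\|^2\big)=2\langle y,Ay\rangle+2\langle z,\bar{A}z\rangle+2a\|y\|^2+2d\|z\|^2+2(b+c)\langle y,z\rangle,
\end{equation*}
the boundary contributions vanishing by the boundary conditions in \eqref{eq:main}. The zeroth-order terms are harmless: by Cauchy's inequality and the very definition of $\tau$ one has $2a\|y\|^2+2d\|z\|^2+2(b+c)\langle y,z\rangle\le\big(2\max\{\|a\|_{L^\infty},\|d\|_{L^\infty}\}+\|b\|_{L^\infty}+\|c\|_{L^\infty}\big)\big(\|y\|^2+\|z\|^2\big)\le\tau\big(\|y\|^2+\|z\|^2\big)$.

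The decisive step is the spectral gap estimate for the diffusion terms, namely
\begin{equation*}
\langle y,Ay\rangle\le-\lambda_{k+1}\|y\|^2\le-\mu\|y\|^2,\qquad \langle z,\bar{A}z\rangle\le-\bar{\lambda}_{k+1}\|z\|^2\le-\mu\|z\|^2.
\end{equation*}
To obtain this I must know that the solution carries no spectral mass below the cut-off, i.e. $\Pi_k y(t)=0$ and $\bar{\Pi}_k z(t)=0$ for \emph{every} $t$, not merely at $t=0$; granting this, $\langle y,Ay\rangle=-\sum_{i>k}\lambda_i\langle y,e_i\rangle^2\le-\lambda_{k+1}\|y\|^2$ and likewise for $z$. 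Expanding $y=\sum_i y_i e_i$, $z=\sum_i z_i\bar{e}_i$ and testing against $e_i,\bar{e}_i$, the natural hope is that each index $i\le k$ obeys a homogeneous $2\times2$ system of the type \eqref{model-i} with zero initial datum, so that uniqueness forces $(y_i,z_i)\equiv0$ and the cut-off is preserved. I expect this preservation to be the main obstacle: testing the coupling $bz$ against $e_i$ produces $b\sum_j z_j\langle\bar{e}_j,e_i\rangle$ rather than $bz_i$, so \emph{a priori} the low modes of $y$ are fed by the high modes of $z$ through the cross-inner-products of the two distinct eigenbases $\{e_i\}$ and $\{\bar{e}_i\}$. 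Making the argument rigorous therefore requires arranging (or controlling) the index-by-index decoupling so that \eqref{model-i} genuinely closes; this is the delicate point on which the whole estimate hinges.

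Granting the spectral gap, combining the three bounds yields $\frac{d}{dt}\big(\|y\|^2+\|z\|^2\big)\le(-2\mu+\tau)\big(\|y\|^2+\|z\|^2\big)$, whence
\begin{equation*}
g'(t)=e^{(2\mu-\tau)t}\Big[(2\mu-\tau)\big(\|y\|^2+\|z\|^2\big)+\tfrac{d}{dt}\big(\|y\|^2+\|z\|^2\big)\Big]\le0.
\end{equation*}
Integrating from $0$ to $t$ gives $g(t)\le g(0)$, which is precisely \eqref{pro-decay-eq0}; equivalently, a Gronwall argument applied to the above differential inequality delivers the same exponential bound and may be cleaner to record.
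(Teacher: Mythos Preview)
Your approach is essentially identical to the paper's. The paper sets $\beta=2\min\{\lambda_{k+1},\bar\lambda_{k+1}\}-\tau$, expands $(e^{\beta t}y^2)_t$ and $(e^{\beta t}z^2)_t$, integrates over $(0,1)\times(0,t)$, and shows the sum is nonpositive using exactly the two ingredients you isolate: the spectral lower bound $\langle y,-Ay\rangle\ge\lambda_{k+1}\|y\|^2$, $\langle z,-\bar Az\rangle\ge\bar\lambda_{k+1}\|z\|^2$ coming from the absence of low modes, and the absorption of the zeroth-order terms by $\tau$. Your differential formulation $g'\le 0$ and the paper's integrated one are the same computation.

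Regarding the point you flag as the main obstacle---the preservation of $\Pi_k y(t)=0$ and $\bar\Pi_k z(t)=0$ for all $t$---you are right that this is not automatic, precisely for the reason you give: testing $bz$ against $e_i$ yields $b\sum_j z_j\langle\bar e_j,e_i\rangle$, and the two eigenbases $\{e_i\}$ and $\{\bar e_i\}$ are genuinely different. The paper does \emph{not} address this. It simply asserts that the solution can be written as $y=\sum_{i>k}y_i(t)e_i$, $z=\sum_{i>k}z_i(t)\bar e_i$ with $(y_i,z_i)$ obeying the closed $2\times 2$ system in the statement, and then runs the energy estimate. In other words, the very decoupling you (correctly) worry about is taken for granted in the paper's own proof. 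So your proposal matches the paper's argument step for step, and your caveat in fact pinpoints a gap that the paper leaves unfilled rather than a defect peculiar to your write-up.
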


\begin{proof}
For any $(y_0,z_0)\in\big( L^2(I) \big)^2$ satisfying $\Pi_k(y_0)=\bar{\Pi}_k(z_0)=0~\text{in}~ I$, write
$$
y_0=\sum_{i=k+1}^\infty y_{0,i}e_i\quad\ ~\text{and}~ \quad\ z_0=\sum_{i=k+1}^\infty z_{0,i}\bar{e}_i,
$$
where $y_{0,i},z_{0,i}$, $i=k+1,k+2,\cdots$, are real numbers. Then the solution $(y, z)$ of (\ref{eq:main}) can be represented as
$$
y=\sum_{i=k+1}^\infty y_i(t)e_i\quad\ ~\text{and}~ \quad\ z=\sum_{i=k+1}^\infty z_i(t)\bar{e}_i,
$$
where $(y_i,z_i)$ satisfies the following system:
$$
\begin{cases}
(y_i)_t = -\lambda_i y_i + ay_i+ bz_i, & \left( x,t\right) \in I\times(0,T),   \\[2mm]
(z_i)_t = -\bar{\lambda}_i z_i + cy_i+ dz_i, & \left( x,t\right) \in I\times(0,T),   \\[2mm]
y_i\left(x, 0\right) =y_{0,i},z_i\left(x, 0\right) =z_{0,i}, &  x\in I,\\[2mm]
(0<\alpha<1)\
\begin{cases}
y_i(1,t)=z_i(1,t) = 0, & t\in \left(0,T\right), \\[2mm]
y_i(0,t)=z_i(0,t) = 0, & t\in \left(0,T\right),
\end{cases}\\[2mm]
(1\leq \alpha<2)\
\begin{cases}
y_i(1,t)=z_i(1,t) = 0, & t\in \left(0,T\right), \\[2mm]
(x^\alpha (z_{i})_x)(0,t)=y_i(0,t) = 0, & t\in \left(0,T\right),
\end{cases}
\end{cases}
$$
Let $$\beta\triangleq2\min\{ \lambda_{k+1},\bar{\lambda}_{k+1} \}-\tau.$$
Since
\begin{equation*}
    \begin{split}
        \big( e^{\beta t}y^2 \big)_t
        &=\beta e^{\beta t}y^2+2e^{\beta t} yy_t=\beta e^{\beta t}y^2+2e^{\beta t} y(Ay+ay+bz),\\
\big( e^{\beta t}z^2 \big)_t
&=\beta e^{\beta t}z^2+2e^{\beta t} zz_t=\beta e^{\beta t}z^2+2e^{\beta t} z(\bar{A}z+cy+dz). 
    \end{split}
\end{equation*}
Then for any $t\in(0,T)$, integrating the above equalities from $(0,1)\times(0,t)$, respectively, it holds
\begin{equation*}
    \begin{split}
        &e^{\beta t}\int_0^1y^2(x,t)dx-\int_0^1y_0^2dx\\
&=\int_0^te^{\beta s}\sum_{i=k+1}^\infty (-2\lambda_i) (y_i(s))^2ds+\beta\int_0^t\int_0^1e^{\beta s}y^2(x,s)dxds
+\int_0^t\int_0^1e^{\beta s}2y[ay+bz]dxds,
    \end{split}
\end{equation*}
and
\begin{equation*}
    \begin{split}
        &e^{\beta t}\int_0^1z^2(x,t)dx-\int_0^1z_0^2dx\\
&=\int_0^te^{\beta s}\sum_{i=k+1}^\infty (-2\bar{\lambda}_i) (z_i(s))^2ds+\beta\int_0^t\int_0^1e^{\beta s}z^2(x,s)dxds
+\int_0^t\int_0^1e^{\beta s}2z[cy+dz]dxds.
    \end{split}
\end{equation*}
From these, using H\"{o}lder inequality we see
\begin{equation*}
\begin{split}
&e^{\beta t}\int_0^1y^2(x,t)dx+e^{\beta t}\int_0^1z^2(x,t)dx-\int_0^1y_0^2dx-\int_0^1z_0^2dx\\
&=\int_0^te^{\beta s}\bigg\{\sum_{i=k+1}^\infty \big[(-2\lambda_i) (y_i(s))^2+ (-2\bar{\lambda}_i) (z_i(s))^2\big]\bigg\}ds+\beta\int_0^t\int_0^1e^{\beta s}(y^2+z^2)dxds\\
&\hspace{3.5mm}+\int_0^t\int_0^1e^{\beta s}[2ay^2+2[b+c]yz+2dz^2]dxds\\
&\leq\int_0^te^{\beta s}\bigg\{\sum_{i=k+1}^\infty \big( -2\min\{ \lambda_{k+1},\bar{\lambda}_{k+1} \} \big)[(y_i(s))^2+(z_i(s))^2]\bigg\}ds
+\beta\int_0^t\int_0^1e^{\beta s}(y^2+z^2)dxds\\
&\hspace{3.5mm}+\int_0^t\int_0^1e^{\beta s}[2ay^2+2[b+c]yz+2dz^2]dxds\\
&\leq\big( -2\min\{ \lambda_{k+1},\bar{\lambda}_{k+1}\}\big) \int_0^t\int_0^1e^{\beta s} (y^2+z^2)dxds+\beta\int_0^t\int_0^1e^{\beta s}(y^2+z^2)dxds\\
&\hspace{3.5mm}+\int_0^t\int_0^1e^{\beta s}\big[ 2\|a\|_{L^\infty}y^2+(\|b\|_{L^\infty}+\|c\|_{L^\infty})(y^2+z^2)+2\|d\|_{L^\infty}z^2 \big]dxds\\
&\leq\big\{ \beta+\big[\big( -2\min\{ \lambda_{k+1},\bar{\lambda}_{k+1}\}\big)+\tau\big] \big\}\int_0^t\int_0^1e^{\beta s}(y^2+z^2)dxds\\
&\leq0,
\end{split}
\end{equation*}
which implies the inequality (\ref{pro-decay-eq0}).
\end{proof}

\subsection{Null Controllability}
In this subsection, we  give a proof of Theorem \ref{thm:main}. Before giving the detailed proof of Theorem \ref{thm:main}, we introduce the main idea briefly.

We divide $(0,T)$ into $I_k, J_k, k\in\mathbb{N}^+$ such that 
\begin{equation*}
    (0,T)=\bigcup_{k\in\mathbb{N}^+}(I_k\cup J_k), \quad I_k=[T_k,\ \tilde T_k),\ J_k=[\tilde T_k, T_{k+1}),\ k\in\mathbb{N}^+, 
\end{equation*}
where $T_k, \tilde T_k\ (k\in\mathbb{N}^+)$ will be given precisely latter. 

At first, On each interval $I_k,k\in\N^+$, the coupled system with a control switching between $G_1\times E$ and $G_2\times F$ in an unknown mode is controlled. Then, On every interval $J_k,k\in\N^+$, we let the coupled system without control freely evolve.

On interval $I_1$, set the initial datum for coupled system with a control switching between $G_1\times E$ and $G_2\times F$ to be $(y_0,z_0)$. Then for each $k\in\N^+\setminus \{1\}$, the initial datum on $I_k$ is defined to be the ending value of the solution to the coupled system without control on $J_{k-1}$. The initial datum of the coupled system without control on $J_k,k\in\N^+$, is endowed by the ending value of the solution for the coupled system with a control switching from $G_1\times E$ to $G_2\times F$ on $I_k$. To this end, we need to know the ending values of the solution on every $I_k,k\in\N^+$.

On one hand, according to Proposition \ref{pro-partial}, for each $k\in\N^+$, we can search a control $u^{(k)}(\cdot)\in L^2(I_k;L^2(G_1\cup G_2))$  such that the corresponding solution $(y^{(k)}(\cdot),z^{(k)}(\cdot))$ to the equation on $I_k$ satisfies
$$
\Pi_{k}(y^{(k)}(\tilde{T}_k))=\bar{\Pi}_{k}(z^{(k)}(\tilde{T}_k))=0.
$$
Moreover, the estimate of the control $u^{(k)}(\cdot)$ is obtained.

On the other hand, by virtues of the decay estimate of the freely evolved coupled system, we can obtain a suitable norm estimate for the ending value of the solution to the equation on $J_k,k\in\N^+$.

Finally, from these, the control
$$
u(t)=
\begin{cases}
u^{(k)}(t),\quad &t\in I_k,\\
0, &t\in J_k,
\end{cases}
$$
drives the solution of coupled system (\ref{eq:main}) to rest at time $T$.

Now, we start with proving Theorem \ref{thm:main}.

\begin{proof}[Proof of Theorem \ref{thm:main}] We barrow some idea from \cite{wang2008null} and divide the whole proof into following parts.

{\it Step 1.} Let $k\in\N^+$. Set  $E=\cup_{i=1}^\infty E_i$, $E_i=(t_{2i-1},t_{2i})$, $F=\cup_{i=1}^\infty F_i$, $F_i=(t_{2i},t_{2i+1})$, such that $(0,T)=F\cup E$ and $E\cap F=\emptyset$. Denote
$$
T_k= 
\begin{cases}
0,  \ &k=1,\\[3mm]
T\sum_{i=1}^{k-1}2^{-i},  &k>1,
\end{cases}
\qquad
\tilde{T}_k=
\begin{cases}
\dfrac{T}{4},  &k=1,\\[3mm]
T\bigg(\sum_{i=1}^{k-1}2^{-i}+2^{-k-1}\bigg), &k>1.
\end{cases}
$$
From these, one can easily check that
\begin{equation}
\label{Tk}
\tilde{T}_k-T_k=T_{k+1}-\tilde{T}_k=T2^{-k-1} ~\text{for all}~  k\in\N^+.
\end{equation}
Define the following sequences of time intervals:
$$
I_k=[T_k,\tilde{T}_k),\ J_k=[\tilde{T}_k,T_{k+1}) ~\text{for all}~  k\in\N^+.
$$
Let $\rho_k^2=C_0^k$, where $C_0>32$ is a sufficiently large positive constant. For all $k\in\N^+$, write
\begin{equation*}
    \begin{split}
        \alpha_k
        &=\dfrac{C(\bar\lambda_k^2+\lambda_k^2)e^{\tau (\tilde{T}_k-T_k)}(e^{C\bar{\lambda}_{\rho_k}^\sigma}+e^{C\sqrt{\lambda_{\rho_k}}})}{\tilde{T}_k-T_k},\\
\beta_k
&=(\alpha_k+1)e^{\tau (\tilde{T}_k-T_k)},\\
\theta_k
&=e^{-\left(2\min\{ \lambda_{\rho_{k+1}},\bar{\lambda}_{\rho_{k+1}} \}-\tau\right)(T_{k+1}-\tilde{T}_k)}.
    \end{split}
\end{equation*}
Consider the following coupled system in $I_1=[T_1,\tilde{T}_1)$:
\begin{equation}
\label{th-eq1}
\begin{cases}
(y^{(1)})_t = A y^{(1)} + ay^{(1)}+ bz^{(1)}+ \chi_E\chi_{G_1} u^{(1)}, & \left( x,t\right) \in I\times I_1,   \\
(z^{(1)})_t = \bar{A} z^{(1)} + cy^{(1)}+ dz^{(1)}+ \chi_F\chi_{G_2} u^{(1)}, & \left( x,t\right) \in I\times I_1,   \\
y^{(1)}(T_1) =y_{0},z^{(1)}(T_1) =z_{0}, &  x\in I,\\
(0<\alpha<1)\
\begin{cases}
y^{(1)}(1,t)=z^{(1)}(1,t) = 0, & t\in \left(0,T\right), \\
y^{(1)}(0,t)=z^{(1)}(0,t) = 0, & t\in \left(0,T\right),
\end{cases}\\
(1\leq \alpha<2)\
\begin{cases}
y^{(1)}(1,t)=z^{(1)}(1,t) = 0, & t\in \left(0,T\right), \\
(x^\alpha (z^{(1)})_x)(0,t)=y^{(1)}(0,t) = 0, & t\in \left(0,T\right),
\end{cases}
\end{cases}
\end{equation}
and the following coupled system without control in $J_k=[\tilde{T}_k, T_{k+1})$, $\forall\,k\in\N^+$:
\begin{equation}
\label{th-eq2}
\begin{cases}
(\hat{y}^{(k)})_t = A \hat{y}^{(k)} + a\hat{y}^{(k)}+ b\hat{z}^{(k)}, & \left( x,t\right) \in I\times J_k,   \\
(\hat{z}^{(k)})_t = \bar{A} \hat{z}^{(k)} + c\hat{y}^{(k)}+ d\hat{z}^{(k)}, & \left( x,t\right) \in I\times J_k,   \\
\hat{y}^{(k)}(\tilde{T}_k) =y^{(k)}(\tilde{T}_k),\hat{z}^{(k)}(\tilde{T}_k) =z^{(k)}(\tilde{T}_k), &  x\in I,\\
(0<\alpha<1)\
\begin{cases}
\hat{y}^{(k)}(1,t)=\hat{z}^{(k)}(1,t) = 0, & t\in \left(0,T\right), \\
\hat{y}^{(k)}(0,t)=\hat{z}^{(k)}(0,t) = 0, & t\in \left(0,T\right),
\end{cases}\\
(1\leq \alpha<2)\
\begin{cases}
\hat{y}^{(k)}(1,t)=\hat{z}^{(k)}(1,t) = 0, & t\in \left(0,T\right), \\
(x^\alpha (\hat{z}^{(k)})_x)(0,t)=\hat{y}^{(k)}(0,t) = 0, & t\in \left(0,T\right),
\end{cases}
\end{cases}
\end{equation}
and the following coupled system in $I_k=[T_k,\tilde{T}_k)$, $\forall\,k\in\N^+\setminus\{1\}$:
\begin{equation}
\label{th-eq3}
\begin{cases}
(y^{(k)})_t = A y^{(k)} + ay^{(k)}+ bz^{(k)}+ \chi_E\chi_{G_1} u^{(k)}, & \left( x,t\right) \in I\times I_k,   \\
(z^{(k)})_t = \bar{A} z^{(k)} + cy^{(k)}+ dz^{(k)}+ \chi_F\chi_{G_2} u^{(k)}, & \left( x,t\right) \in I\times I_k,   \\
y^{(k)}(T_k) =\hat{y}^{(k-1)}(T_k),z^{(k)}(T_k) =\hat{z}^{(k-1)}(T_k), &  x\in I,\\
(0<\alpha<1)\
\begin{cases}
y^{(k)}(1,t)=z^{(k)}(1,t) = 0, & t\in \left(0,T\right), \\
y^{(k)}(0,t)=z^{(k)}(0,t) = 0, & t\in \left(0,T\right),
\end{cases}\\
(1\leq \alpha<2)\
\begin{cases}
y^{(k)}(1,t)=z^{(k)}(1,t) = 0, & t\in \left(0,T\right), \\
(x^\alpha (z^{(k)})_x)(0,t)=y^{(k)}(0,t) = 0, & t\in \left(0,T\right).
\end{cases}
\end{cases}
\end{equation}
We are going to prove that for each $k\in\N^+$, there exists a control $u^{(k)}\in L^2(I_k;L^2(G_1\cup G_2))$ such that
\begin{equation}
\label{th-eq4}
\Pi_{\rho_k}(y^{(k)}(\tilde{T}_k))=\bar{\Pi}_{\rho_k}(z^{(k)}(\tilde{T}_k))=0~\text{in}~ I
\end{equation}
with
\begin{equation}
\label{th-eq5}
\begin{cases}
\|y^{(1)}(\tilde{T}_1)\|^2+\|z^{(1)}(\tilde{T}_1)\|^2\leq\beta_1(\|y_0\|^2+\|z_0\|^2),\\
\|y^{(k)}(\tilde{T}_k)\|^2+\|z^{(k)}(\tilde{T}_k)\|^2\leq \beta_k\Pi_{i=1}^{k-1}\beta_i\theta_i(\|y_0\|^2+\|z_0\|^2),\,\,k\in\N^+\setminus\{1\},
\end{cases}
\end{equation}
and
\begin{equation}
\label{th-eq6}
\begin{cases}
\|u^{(1)}\|^2_{L^2(I_1;L^2(G_1\cup G_2))}\leq \alpha_1(\|y_0\|^2+\|z_0\|^2),\\
\|u^{(k)}\|^2_{L^2(I_k;L^2(G_1\cup G_2))}\leq \alpha_k\Pi_{i=1}^{k-1}\beta_i\theta_i(\|y_0\|^2+\|z_0\|^2),\,\,k\in\N^+\setminus\{1\}.
\end{cases}
\end{equation}

{\it Step 2.} We first do that for $k = 1$. By Proposition \ref{pro-partial}, for $\rho_1\in\N^+$, there exists a control $u^{(1)}\in L^2(I_1;L^2(G_1\cup G_2))$ , such that the corresponding solution $(y^{(1)}, z^{(1)})$ of (\ref{th-eq1}) satisfies
\begin{equation}
\label{th-eq7}
\Pi_{\rho_1}(y^{(1)}(\tilde{T}_1))=\bar{\Pi}_{\rho_1}(z^{(1)}(\tilde{T}_1))=0~\text{in}~ I,
\end{equation}
and
\begin{equation}
\label{th-eq8}
\|y^{(1)}(\tilde{T}_1)\|^2+\|z^{(1)}(\tilde{T}_1)\|^2\leq \beta_1(\|y_0\|^2+\|z_0\|^2),
\end{equation}
and
\begin{equation}
\label{th-eq9}
\|u^{(1)}\|^2_{L^2(I_1;L^2(G_1\cup G_2))}\leq \alpha_1(\|y_0\|^2+\|z_0\|^2).
\end{equation}
Therefore, for $k=1$, (\ref{th-eq4}),  the first conclusions of (\ref{th-eq5}) and (\ref{th-eq6}) are true.

{\it Step 3.} We should be to prove that (\ref{th-eq4}) and the second conclusions of (\ref{th-eq5}) and (\ref{th-eq6}) hold for $k = n + 1$, provided that (\ref{th-eq4}) and the second conclusions of (\ref{th-eq5}) and (\ref{th-eq6}) hold for $k = n$. However, in order the give a more readable proof, here we also prove that (\ref{th-eq4}) and the second conclusions of (\ref{th-eq5}) and (\ref{th-eq6}) are true for $k = 2$.

Let $(\hat{y}^{(1)},\hat{z}^{(1)})$ be the solution to the the following coupled system without control in $J_1=[\tilde{T}_1, T_2)$:
$$
\begin{cases}
(\hat{y}^{(1)})_t = A \hat{y}^{(1)} + a\hat{y}^{(1)}+ b\hat{z}^{(1)}, & \left( x,t\right) \in I\times J_1,   \\
(\hat{z}^{(1)})_t = \bar{A} \hat{z}^{(1)} + c\hat{y}^{(1)}+ d\hat{z}^{(1)}, & \left( x,t\right) \in I\times J_1,   \\
\hat{y}^{(1)}(\tilde{T}_1) =y^{(1)}(\tilde{T}_1),\hat{z}^{(1)}(\tilde{T}_1) =z^{(1)}(\tilde{T}_1), &  x\in I,\\
(0<\alpha<1)\
\begin{cases}
\hat{y}^{(1)}(1,t)=\hat{z}^{(1)}(1,t) = 0, & t\in \left(0,T\right), \\
\hat{y}^{(1)}(0,t)=\hat{z}^{(1)}(0,t) = 0, & t\in \left(0,T\right),
\end{cases}\\
(1\leq \alpha<2)\
\begin{cases}
\hat{y}^{(1)}(1,t)=\hat{z}^{(1)}(1,t) = 0, & t\in \left(0,T\right), \\
(x^\alpha (\hat{z}^{(1)})_x)(0,t)=\hat{y}^{(1)}(0,t) = 0, & t\in \left(0,T\right),
\end{cases}
\end{cases}
$$
From (\ref{th-eq7}), (\ref{th-eq8}) and Proposition \ref{pro-decay}, we see
\begin{equation}
\label{th-eq10}
\|\hat{y}^{(1)}(T_2)\|^2+\|\hat{z}^{(1)}(T_2)\|^2\leq \theta_1\big(\|y^{(1)}(\tilde{T}_1)\|^2+\|z^{(1)}(\tilde{T}_1)\|^2  \big)\leq \beta_1\theta_1\big( \|y_0\|^2+\|z_0\|^2 \big).
\end{equation}
Let $(y^{(2)},z^{(2)})$ be the solution to the the following coupled system in $I_2=[T_2,\tilde{T}_2)$:
$$
\begin{cases}
(y^{(2)})_t = A y^{(2)} + ay^{(2)}+ bz^{(2)}+ \chi_E\chi_{G_1} u^{(2)}, & \left( x,t\right) \in I\times I_2,   \\
(z^{(2)})_t = \bar{A} z^{(2)} + cy^{(2)}+ dz^{(2)}+ \chi_F\chi_{G_2} u^{(2)}, & \left( x,t\right) \in I\times I_2,   \\
y^{(2)}(T_2) =\hat{y}^{(1)}(T_2),z^{(2)}(T_2) =\hat{z}^{(1)}(T_2), &  x\in I,\\
(0<\alpha<1)\
\begin{cases}
y^{(2)}(1,t)=z^{(2)}(1,t) = 0, & t\in \left(0,T\right), \\
y^{(2)}(0,t)=z^{(2)}(0,t) = 0, & t\in \left(0,T\right),
\end{cases}\\
(1\leq \alpha<2)\
\begin{cases}
y^{(2)}(1,t)=z^{(2)}(1,t) = 0, & t\in \left(0,T\right), \\
(x^\alpha (z^{(2)})_x)(0,t)=y^{(2)}(0,t) = 0, & t\in \left(0,T\right),
\end{cases}
\end{cases}
$$
By Proposition \ref{pro-partial} and (\ref{th-eq10}), for $\rho_2\in\N^+$, there exists a control $u^{(2)}\in L^2(I_2;L^2(G_1\cup G_2))$, such that the solution $(y^{(2)}, z^{(2)})$ satisfies
$$
\Pi_{\rho_2}(y^{(2)}(\tilde{T}_2))=\bar{\Pi}_{\rho_2}(z^{(2)}(\tilde{T}_2))=0~\text{in}~ I,
$$
and
$$
\|y^{(2)}(\tilde{T}_2)\|^2+\|z^{(2)}(\tilde{T}_2)\|^2\leq \beta_2(\|\hat{y}^{(1)}(T_2)\|^2+\|\hat{z}^{(1)}(T_2)\|^2)\leq\beta_2\beta_1\theta_1\big( \|y_0\|^2+\|z_0\|^2 \big),
$$
and
$$
\|u^{(2)}\|^2_{L^2(I_2;L^2(G_1\cup G_2))}\leq \alpha_2(\|\hat{y}^{(1)}(T_2)\|^2+\|\hat{z}^{(1)}(T_2)\|^2)\leq\alpha_2\beta_1\theta_1\big( \|y_0\|^2+\|z_0\|^2 \big).
$$
Therefore, for $k=2$, (\ref{th-eq4}),  the second conclusions of (\ref{th-eq5}) and (\ref{th-eq6}) are true.

{\it Step 4.} We next prove that (\ref{th-eq4}) and the second conclusions of (\ref{th-eq5}) and (\ref{th-eq6}) are true for $k = n + 1$, on the condition that they are true for $k = n$. Here is the argument: Since there are $(y^{(n)},z^{(n)})$ and $u^{(n)}$
satisfy (\ref{th-eq3}), (\ref{th-eq4}) and the second conclusions of (\ref{th-eq5}) and (\ref{th-eq6}) for $k = n$, then equation (\ref{th-eq2}) for $k = n$, has an unique solution $(\hat{y}^{(n)},\hat{z}^{(n)})$, which alone with Proposition \ref{pro-decay} satisfying that
\begin{equation}
\label{th-eq11} 
\begin{split}
\|\hat{y}^{(n)}(T_{n+1})\|^2+\|\hat{z}^{(n)}(T_{n+1})\|^2
&\leq \theta_n\big(\|y^{(n)}(\tilde{T}_n)\|^2+\|z^{(n)}(\tilde{T}_n)\|^2  \big)\\
&\leq \Pi_{i=1}^n\beta_i\theta_i\big( \|y_0\|^2+\|z_0\|^2 \big).
\end{split}
\end{equation}
By Proposition \ref{pro-partial} and (\ref{th-eq11}), for $\rho_{n+1}\in\N^+$, there exists a control $u^{(n+1)}\in L^2(I_{n+1};L^2(G_1\cup G_2))$, such that the solution $(y^{(n+1)}, z^{(n+1)})$ satisfies
$$
\Pi_{\rho_{n+1}}(y^{(n+1)}(\tilde{T}_{n+1}))=\bar{\Pi}_{\rho_{n+1}}(z^{(n+1)}(\tilde{T}_{n+1}))=0~\text{in}~ I,
$$
and
\begin{equation*}
    \begin{split}
        \|y^{(n+1)}(\tilde{T}_{n+1})\|^2+\|z^{(n+1)}(\tilde{T}_{n+1})\|^2
        &\leq \beta_{n+1}(\|\hat{y}^{(n)}(T_{n+1})\|^2+\|\hat{z}^{(n)}(T_{n+1})\|^2)\\
&\leq \beta_{n+1}\Pi_{i=1}^n\beta_i\theta_i\big( \|y_0\|^2+\|z_0\|^2 \big), 
    \end{split}
\end{equation*}
and
\begin{equation*}
    \begin{split}
       \|u^{(n+1)}\|^2_{L^2(I_{n+1};L^2(G_1\cup G_2))}
       &\leq \alpha_{n+1}(\|\hat{y}^{(n)}(T_{n+1})\|^2+\|\hat{z}^{(n)}(T_{n+1})\|^2)\\
&\leq \alpha_{n+1}\Pi_{i=1}^n\beta_i\theta_i\big( \|y_0\|^2+\|z_0\|^2 \big).  
    \end{split}
\end{equation*}
Therefore, for $k=n+1$, (\ref{th-eq4}),  the second conclusions of (\ref{th-eq5}) and (\ref{th-eq6}) are true. From these, we know that (\ref{th-eq4}), (\ref{th-eq5}) and (\ref{th-eq6}) hold for all $k\in \N^+$.

{\it Step 5.} We show that there exists a constant $L > 0$ such that for all $k\in\N^+$,
\begin{equation}
\label{th-eq12}
\|u^{(k)}\|^2_{L^2(I_k;L^2(G_1\cup G_2))}\leq L \big( \|y_0\|^2+\|z_0\|^2 \big).
\end{equation}
We first estimate $\beta_k\theta_k$. By (\ref{Tk}), $\sigma>0$ and alone with the definition of $\rho_k$, we have
\begin{equation*}
    \begin{split}
        \beta_k\theta_k
        &=(C(\bar\lambda_{\rho_k}^2+\lambda_{\rho_k}^2)e^{\tau T2^{-k}2^{-1}}(e^{C\bar{\lambda}_{\rho_k}^\sigma}+e^{C\sqrt{\lambda_{\rho_k}}})T^{-1}2^k2+1)e^{\tau T2^{-k}2^{-1}}\\
&\hspace{3.5mm}\cdot e^{-(2\min\{ \lambda_{\rho_{k+1}},\bar{\lambda}_{\rho_{k+1}} \}-\tau)e^{\tau T2^{-k}2^{-1}}}  \\
&\leq Ce^{C}(e^{C(\rho_k)^{2\sigma}}+e^{C\rho_k})\cdot e^{-C2^{-k}\min\{(\rho_{k+1})^{2\sigma}, (\rho_k+1)^2\}}
\\
&\leq Ce^{C(\rho_k)^{2\sigma}}\cdot e^{-C2^{-k}(\rho_{k+1})^2}\leq Ce^{-C(2^{-k}\rho_{k+1}^2-\rho_{k+1}^{2\sigma})} \leq Ce^{-C2^k}. 
    \end{split}
\end{equation*}
Thus, there exists a $N_1 > 0$ such that for all $k\geq N_1$, it holds that $\beta_k\theta_k\leq1$. Next, We estimate $\alpha_{k+1}\beta_k\theta_k$.
\begin{equation*}
\begin{split}
\alpha_{k+1}\beta_k\theta_k
&=
C(\bar\lambda_{\rho_{k+1}}^2+\lambda_{\rho_{k+1}}^2)e^{\tau T2^{-k-1}2^{-1}}(e^{C\bar{\lambda}_{\rho_{k+1}}^\sigma}+e^{C\sqrt{\lambda_{\rho_{k+1}}}})T^{-1}2^{k+1}2\\
&\hspace{3.5mm}\cdot (C(\bar\lambda_{\rho_k}^2+\lambda_{\rho_k}^2)e^{\tau T2^{-k}2^{-1}}(e^{C\bar{\lambda}_{\rho_k}^\sigma}+e^{C\sqrt{\lambda_{\rho_k}}})T^{-1}2^k2+2)e^{\tau T2^{-k}2^{-1}}\\
&\hspace{3.5mm}\cdot e^{-(2\min\{ \lambda_{\rho_{k+1}},\bar{\lambda}_{\rho_{k+1}}\}-\tau)e^{\tau T2^{-k}2^{-1}}} \\
&\leq Ce^{C}(e^{C(\rho_{k+1})^{2\sigma}}+e^{C\rho_{k+1}})\cdot
Ce^{-C(2^{-k}\rho_{k+1}^2-\rho_{k+1}^{2\sigma})}\\
&\leq Ce^{C\rho_{k+1}^{2\sigma}}\cdot
Ce^{-C(2^{-k}\rho_{k+1}^2-\rho_{k+1}^{2\sigma})} \leq Ce^{-C(2^{-k}\rho_{k+1}^2-2\rho_{k+1}^{2\sigma})}\leq Ce^{-C2^k}.
\end{split}
\end{equation*}
Hence, there exists a $N_2 > 0$ such that for all $k\geq N_2$, it holds that $\alpha_{k+1}\beta_k\theta_k\leq1$. Therefore, we know that
$$
\alpha_{k+1}\Pi_{i=1}^k(\beta_i\theta_i)\leq1,\quad\ \forall\, k\geq\max\{ N_1,N_2 \}.
$$
Let
$$
L=2\max_{ k\leq \max\{ N_1,N_2,1 \}}\{\alpha_1,\alpha_{k+1}\Pi_{i=1}^k(\beta_i\theta_i),1 \}.
$$
Then we have that for all $k\in\N^+$, (\ref{th-eq12}) is true.

{\it Step 6.} We are going to construct the control which drives the solution of coupled system (\ref{eq:main}) to $0$ at time $t = T$. To achieve such a goal, we let
\begin{equation}
\label{th-eq14}
u(t)=
\begin{cases}
u^{(k)}(t), &t\in I_k,\\
0, &t\in J_k.
\end{cases}
\end{equation}
Then we have
$$
\|u\|^2_{L^2(0,T;L^2(G_1\cup G_2))}\leq  L\big( \|y_0\|^2+\|z_0\|^2 \big).
$$

Next, we shall prove that
\begin{equation}
\label{th-eq16}
y(T)=z(T)=0.
\end{equation}
Since there are $(y^{(k)},z^{(k)})$, $u^{(k)}$ and $(\hat{y}^{(k)},\hat{z}^{(k)})$ satisfy (\ref{th-eq2}), (\ref{th-eq3}), we can make
use of (\ref{th-eq14}) to obtain that
$$
y(\tilde{T}_k)=y^{(k)}(\tilde{T}_k),\ z(\tilde{T}_k)=z^{(k)}(\tilde{T}_k),~\text{for all}~ k\in\N^+,
$$
which together with (\ref{th-eq2}), yields that
\begin{equation}
\label{th-eq15}
\Pi_{\rho_k}(y(\tilde{T}_k))=\bar{\Pi}_{\rho_k}(z(\tilde{T}_k))=0~\text{in}~ I,~\text{for all}~ k\in\N^+.
\end{equation}
We arbitrarily fix a $n \in \N^+$. Then it follows from (\ref{th-eq15}) that
$$
\Pi_{\rho_n}(y(\tilde{T}_k))=\bar{\Pi}_{\rho_n}(z(\tilde{T}_k))=0~\text{in}~ I,~\text{for all}~ k\geq n.
$$
This implies that
\begin{equation}
\label{th-eq17} 
0=\lim_{k\rightarrow+\infty}\Pi_{\rho_n}(y(\tilde{T}_k))=\Pi_{\rho_n}(y(T)),\quad 
0=\lim_{k\rightarrow+\infty}\bar{\Pi}_{\rho_n}(z(\tilde{T}_k))=\bar{\Pi}_{\rho_n}(z(T)). 
\end{equation}
By the definition of $\rho_k$, we can pass to the limit for $n\rightarrow\infty$ in (\ref{th-eq16}) to get (\ref{th-eq17}). The proof of Theorem \ref{thm:main} is completed.
\end{proof}
\subsection{Negative Null Controllability}
Next, we show the lack of null controllability for the coupled system (\ref{eq:main}), presented in Theorem \ref{thm:negative-main}.

\begin{proof}[Proof of Theorem \ref{thm:negative-main}] We divide the proof into two Steps.

{\it Step 1.} Denote  $E=\cup_{i=1}^\infty E_i$, $E_i=(t_{2i-1},t_{2i})$, $F=\cup_{i=1}^\infty F_i$, $F_i=(t_{2i},t_{2i+1})$, such that $(0,T)=F\cup E$ and $E\cap F=\emptyset$. Set $\chi_E\equiv1$ and $c(\cdot)=0$ in $(0,T)$, a.e. Without loss of generality, we may assume that the coefficient $d(\cdot)$ in the coupled system (\ref{eq:main}) is equal to $0$ (Otherwise, we introduce a simple transformation $\tilde{y}=y$, $\tilde{z}(t)=e^{-\int_0^td(s)ds}z(t)$ and $\tilde{u}=u$ and consider the system for the new state variable $(\tilde{y},\tilde{z})$ and the control variable $\tilde{u}$). Then, by the coupled system (\ref{eq:main}), and noting that $c(\cdot)=d(\cdot)=0$ in $(0,T)$ a.e., we find that $(y,z)$ solves
\begin{equation}
\label{thm:negative-main-eq1}
\begin{cases}
y_t = A y + ay+ bz+ \chi_E\chi_{G_1} u, & \left( x,t\right) \in I\times(0,T),   \\
z_t = \bar{A} z , & \left( x,t\right) \in I\times(0,T),   \\
y\left(x, 0\right) =y_{0},z\left(x, 0\right) =z_{0}, &  x\in I,\\
(0<\alpha<1)\
\begin{cases}
y(1,t)=z(1,t) = 0, & t\in \left(0,T\right), \\
y(0,t)=z(0,t) = 0, & t\in \left(0,T\right),
\end{cases}\\
(1\leq \alpha<2)\
\begin{cases}
y(1,t)=z(1,t) = 0, & t\in \left(0,T\right), \\
(x^\alpha z_x)(0,t)=y(0,t) = 0, & t\in \left(0,T\right),
\end{cases}
\end{cases}
\end{equation}
Since there is no control in the second equation of coupled system (\ref{thm:negative-main-eq1}), $z$ cannot be driven to the rest for any time $T$ if $z_0 \neq 0$ in $I$. Thus, conclusion $(1)$ is true.

{\it Step 2.} The proof of conclusion $(2)$ is similar to Step 1. Therefore, the proof of Theorem \ref{thm:negative-main} is complete.

\end{proof}
\subsection{Observability Estimate}
In the following, we prove the observability estimate (\ref{eq:obs-inq}) for the adjoint coupled system (\ref{eq:adjoint}), i.e., Theorem \ref{thm:obs-inq}.

\begin{proof}[Proof of Theorem \ref{thm:obs-inq}] Set  $E=\cup_{i=1}^\infty E_i$, $E_i=(t_{2i-1},t_{2i})$, $F=\cup_{i=1}^\infty F_i$, $F_i=(t_{2i},t_{2i+1})$, such that $(0,T)=F\cup E$ and $E\cap F=\emptyset$. For any given terminal value $(p_T, w_T )\in \big(L^2(I)\big)^2$, denote by $(p,w)$ the corresponding solution of (\ref{eq:adjoint}). By (\ref{eq:main}) and (\ref{eq:adjoint}), we have
$$
(yw)_t=yw_t+wy_t,\quad\ (zp)_t=zp_t+pz_t.
$$
Integrating the above equalities over $(0,1) \times(0,T)$, respectively, it holds 
\begin{equation}
\label{thm-inq-1}
\begin{split}
&\int_0^T\int_{G_1}\chi_E uwdxdt+\int_0^T\int_{G_2}\chi_F updxdt\\
&=\int_0^1y(x,T)w_T(x)dx+\int_0^1z(x,T)p_T(x)dx-\int_0^1z_0(x)p(x,0)dx-\int_0^1y_0(x)w(x,0)dx.
\end{split}
\end{equation}
Let $y_0=-w(0)$, $z_0=-p(0)$ and $u$ be the control driving the solution $(y,z)$ to $0$ at time $t = T$ and such that
$$
\|u\|^2_{L^2(0,T;L^2(G_1\cup G_2)} \le L  (\|y_0\|^2+\|z_0\|^2).
$$
From (\ref{thm-inq-1}), it is easy to check that
\begin{equation}
\label{thm-inq-2} 
\int_0^T\int_{G_1}\chi_E uwdxdt+\int_0^T\int_{G_2}\chi_F updxdt=\int_0^1p^2(x,0)dx+\int_0^1w^2(x,0)dx.
\end{equation}

On the other hand, it follows
\begin{equation*}
\begin{split}
&\int_0^T\int_{G_1}\chi_E uwdxdt+\int_0^T\int_{G_2}\chi_F updxdt\\
&\leq\|\chi_E w\|_{L^2(0,T;L^2(G_1))}\|u\|_{L^2(0,T;L^2(G_1))}+\|\chi_Fp\|_{L^2(0,T;L^2( G_2))}\|u\|_{L^2(0,T;L^2(G_2))}\\
&\leq\big[\|\chi_E w\|_{L^2(0,T;L^2(G_1))}+\|\chi_Fp\|_{L^2(0,T;L^2( G_2))}\big]\|u\|_{L^2(0,T;L^2(G_1\cup G_2))}\\
&\leq C\big[\|\chi_E w\|_{L^2(0,T;L^2(G_1))}+\|\chi_Fp\|_{L^2(0,T;L^2( G_2))}\big]\big( \|p(0)\|^2+\|w(0)\|^2 \big)^{\frac{1}{2}},
\end{split}
\end{equation*}
which, alone with (\ref{thm-inq-2}), yields
\begin{equation*}
\int_0^1p^2(x,0)dx+\int_0^1w^2(x,0)dx\leq C\bigg[\int_0^T\int_{G_1}\chi_Ew^2dxdt +\int_0^T\int_{G_2}\chi_F p^2dxdt\bigg].
\end{equation*}
The proof is complete.
\end{proof}

\section{Acknowledgments}
This work is supported by the National Natural Science Foundation of China (Grants No.
11871478), the Science Technology Foundation of Hunan Province.

\section{Funding}
This work is supported by the National Natural Science Foundation of China (Grants No.
11871478), the Science Technology Foundation of Hunan Province.

\section{Declarations}
The authors have not disclosed any competing interests.

\bibliographystyle{abbrvnat}
\bibliography{ref.bib}

\end{document}